\newtheorem{proposition}{Proposition}[section]
\newtheorem{theorem}[proposition]{Theorem}
\newtheorem{corollary}[proposition]{Corollary}
\newtheorem{lemma}[proposition]{Lemma}
\theoremstyle{definition}
\newtheorem{definition}[proposition]{Definition}
\newtheorem{remark}[proposition]{Remark}
\numberwithin{equation}{section}
\newcommand{\eps}{\varepsilon}
\newcommand{\N}{{\mathbb{N}}}
\newcommand{\Z}{{\mathbb{Z}}}
\newcommand{\R}{{\mathbb{R}}}
\newcommand{\sphere}{{\mathbb{S}}}
\newcommand{\from}{\colon}
\newcommand{\loc}{{\mathrm{loc}}}
\newcommand{\setp}{{\mathfrak{A}}}
\def\Ecal{{\mathcal{E}}}
\def\Mcal{{\mathcal{M}}}
\def\Ucal{{\mathcal{U}}}
\def\Om{\Omega}
\def\om{\Omega}
\DeclareMathOperator{\spann}{span}
\DeclareMathOperator{\proj}{proj}
\title{Normalized bound states for the nonlinear Schr\"odinger equation
in bounded domains}
\author{Dario Pierotti and Gianmaria Verzini}
\begin{document}
\maketitle

\begin{abstract}
Given $\rho>0$, we study the elliptic problem
\[
\text{find } (U,\lambda)\in H^1_0(\Omega)\times \R
\text{ such that }
\begin{cases}
-\Delta U+\lambda U=|U|^{p-1}U\\
\int_{\Omega} U^2\, dx=\rho,
\end{cases}
\]
where $\Omega\subset\R^N$ is a bounded domain and $p>1$ is Sobolev-subcritical, searching
for conditions (about $\rho$, $N$ and $p$) for the existence of solutions. By the
Gagliardo-Nirenberg inequality it follows that, when $p$ is
$L^2$-subcritical, i.e. $1<p\leq1+4/N$, the problem admits solution for every $\rho>0$. In
the $L^2$-critical and supercritical case, i.e. when $1+4/N \leq p < 2^*-1$, we
show that, for any $k\in\N$, the problem admits solutions having Morse index bounded above by
$k$ only if $\rho$ is sufficiently small. Next we provide existence results for certain
ranges of $\rho$, which can be estimated in terms of the Dirichlet eigenvalues of $-\Delta$
in $H^1_0(\Omega)$, extending to general domains and to changing sign solutions some results
obtained in  \cite{MR3318740} for positive solutions in the ball.
\end{abstract}

\noindent
{\footnotesize \textbf{AMS-Subject Classification}}. {\footnotesize 35Q55, 35J20, 35C08}\\
{\footnotesize \textbf{Keywords}}. {\footnotesize Gagliardo-Nirenberg inequality, min-max principles, Krasnoselskii genus}
\section{Introduction}

Given $\rho>0$, we consider the problem
\begin{equation}\label{eq:main_prob_U}
\begin{cases}
-\Delta U + \lambda U = |U|^{p-1}U  & \text{in }\Omega,\smallskip\\
\int_\Omega U^2\,dx = \rho, \quad U=0  & \text{on }\partial\Omega,
\end{cases}
\end{equation}
where $\Omega\subset\R^N$ is a Lipschitz, bounded domain, $1<p<2^*-1$, $\rho>0$ is a fixed
parameter, and both $U\in H^1_0(\Omega)$ and $\lambda\in\R$ are unknown. More precisely,
we investigate conditions on $p$ and $\rho$ (and also $\Omega$) for the solvability of
the problem.

The main interest in \eqref{eq:main_prob_U} relies on the investigation of standing wave
solutions for the nonlinear Schr\"odinger equation
\[
i\frac{\partial \Phi}{\partial t}+\Delta \Phi+ |\Phi|^{p-1}\Phi=0,\qquad
(t,x)\in \R\times \Omega
\]
with Dirichlet boundary conditions on $\partial\Omega$. This equation appears in several
different physical models, both in the case $\Omega=\R^N$ \cite{MR2002047}, and on bounded
domains \cite{MR1837207}. In particular, the latter case appears in nonlinear optics and in
the theory of Bose-Einstein condensation, also as a limiting case of the equation on $\R^N$
with confining potential. When searching for solutions having the
wave function $\Phi$ factorized as $\Phi(x,t)=e^{i\lambda t} U(x)$, one obtains
that the real valued function $U$ must solve
\begin{equation}\label{eq:NLS}
-\Delta U + \lambda U = |U|^{p-1}U ,\qquad U\in H^1_0(\Omega),
\end{equation}
and two points of view are available. The first possibility is to assign the chemical
potential $\lambda\in\R$, and search for solutions of \eqref{eq:NLS} as critical points of the
related action functional. The literature concerning this approach is huge and we do not even
make an attempt to summarize it here. On the contrary, we focus on the second possibility,
which consists in considering $\lambda$ as part of the unknown and prescribing the mass  (or
charge) $\|U\|_{L^2(\Omega)}^2$ as a natural additional condition. Up to our knowledge,
the only previous paper dealing with this case, in bounded domains, is \cite{MR3318740},
which we describe below. The problem of searching for normalized solutions in $\R^N$,
with non-homogeneous nonlinearities, is more investigated \cite{MR3009665,MR1430506},
even though the methods used there can not be easily extended to bounded domains, where
dilations are not allowed. Very recently, also the case of partial confinement has been
considered \cite{BeBoJeVi_2016}.

Solutions of \eqref{eq:main_prob_U} can be identified with critical points of the
associated energy functional
\[
\mathcal{E}(U) = \frac12\int_\Omega|\nabla U|^2\,dx - \frac{1}{p+1}  \int_\Omega|U|^{p+1}\,dx
\]
restricted to the mass constraint
\[
\Mcal_\rho=\{U\in H_0^1(\Om) : \|U\|_{L^2(\Om)}=\rho\},
\]
with $\lambda$ playing the role of a Lagrange multiplier.

A cricial role in the discussion of the above problem is played by the Gagliardo-Nirenberg
inequality: for any $\Omega$ and for any $v\in H^1_0(\Omega)$,
\begin{equation}
\label{sobest}
\|v\|^{p+1}_{L^{p+1}(\Omega)} \leq C_{N,p} \| \nabla v \|_{L^2(\Omega)}^{N(p-1)/2}
\| v \|_{L^2(\Omega)} ^{(p+1)-N(p-1)/2},
\end{equation}
the equality holding only when $\Omega=\R^N$ and $v=Z_{N,p}$, the positive solution of $-\Delta Z + Z = Z^{p}$ (which is unique up to
translations \cite{MR969899}).
Accordingly, the exponent $p$ can be classified in relation with the so called
\emph{$L^2$-critical exponent} $1+4/N$ (throughout all the paper, $p$ will be always
Sobolev-subcritical and its criticality will be understood in the $L^2$ sense).
Indeed we have that $\Ecal$ is bounded below and coercive on $\Mcal_\rho$ if and only if
either $p$ is subcritical, or it is critical and $\rho$ is sufficiently small.

The recent paper \cite{MR3318740} deals with problem \eqref{eq:main_prob_U} in the case of the spherical domain $\Omega = B_1$, when searching for positive solutions $U$.
In particular, it is shown that the solvability of \eqref{eq:main_prob_U} is strongly influenced by the exponent $p$, indeed:
\begin{itemize}
 \item in the subcritical case $1<p<1+4/N$, \eqref{eq:main_prob_U} admits a unique positive
 solution for every $\rho>0$;
 \item if $p=1+4/N$ then \eqref{eq:main_prob_U} admits a unique
 positive solution for
 \[
 0<\rho<\rho^*=\left(\frac{p+1}{2C_{N,p}}\right)^{N/2}=\|Z_{N,p}\|^2_{L^2(\R^N)},
 \]
 and no positive solutions for $\rho\geq\rho^*$;
 \item finally, in the supercritical regime $1+4/N<p<2^*-1$, \eqref{eq:main_prob_U} admits positive
 solutions if and only if $0<\rho\leq\rho^*$ (the threshold $\rho^*$ depending on $p$), and such
 solutions are at least two for $\rho<\rho^*$.
\end{itemize}

In this paper we carry on such analysis, dealing with a general domain $\Omega$ and with solutions which
are not necessarily positive.
More precisely, let us recall that for any $U$ solving \eqref{eq:main_prob_U} for some $\lambda$, it is well-defined the Morse index
\[
m(U) = \max\left\{k : \begin{array}{l}
\exists V\subset H^1_0(\Omega),\,\dim(V)= k:\forall v\in V\setminus\{0\}\smallskip\\
\displaystyle\int_\Omega |\nabla v|^2 + \lambda v^2 - p|U|^{p-1}v^2\,dx<0
\end{array}
\right\}\in\N.
\]
Then, if $\Omega=B_1$, it is well known that a solution $U$ of \eqref{eq:main_prob_U} is
positive if and only if $m(U)=1$. Under this perspective, the results in \cite{MR3318740}
can be read in terms of Morse index one--solutions, rather than positive ones: introducing the sets of admissible masses
\[
\setp_k =\setp_k(p,\Omega) := \left\{\rho>0 : \begin{array}{l} \eqref{eq:main_prob_U}
\text{ admits a solution $U$ (for some $\lambda$)}\\ \text{having Morse index }m(U)\leq k
\end{array} \right\},
\]
then \cite{MR3318740} implies that $\setp_1(p,B_1)$ is a bounded interval if
and only if $p$ is critical or supercritical, while $\setp_1(p,B_1)=\R^+$ in the subcritical case.
On the contrary, when considering general domains and higher Morse index, the situation may become
much more complicated. We collect some examples in the following remark.
\begin{remark}\label{rem:specialdomains}
In the case of a symmetric domain, one can use any solution as a building block to construct other solutions with
a more complex behavior, obtaining the so-called necklace solitary waves. Such kind of solutions are constructed
in \cite{MR3426917}, even though in such paper the focus is on stability, rather than on normalization conditions.
For instance, by scaling argument, any Dirichlet solution of $-\Delta U + \lambda U = |U|^{p-1}U$ in a rectangle $R=\prod_{i=1}^N(a_i,b_i)$ can
be scaled to a solution of $-\Delta U + k^2\lambda U = |U|^{p-1}U$ in $R/k$, $k\in\N_+$, and then $k^N$ copies of it can be
juxtaposed, with alternating sign. In this way one obtains a new solution on $R$ having $k^{4/(p-1)}$ times the mass of the starting one,
and eventually solutions in $R$ with arbitrarily high mass (but with higher Morse index) can be constructed even in the critical
and supercritical case. An analogous construction can be performed in the disk, using solutions in circular sectors as building blocks,
even though in this case explicit bounds on the mass obtained are more delicate.
Also, instead of symmetric domains, singular perturbed ones can be considered, such as dumbbell domains
\cite{MR949628}: for instance, using \cite[Theorem 3.5]{MR2997381}, one can show that for any $k$, there exists a domain $\Omega$,
which is close in a suitable sense to the disjoint union of $k$ domains, such that \eqref{eq:main_prob_U} has a \emph{positive}
solution on $\Omega$ with Morse index $k$ and $\rho=\rho_k\to+\infty$ as $k\to+\infty$.
This kind of results justifies the choice of classifying the solutions in terms of their Morse index, rather
than in terms of their nodal properties.
\end{remark}

Motivated by the previous remark, the first question we address in this paper concerns the boundedness of $\setp_k$.
We provide the following complete classification.
\begin{theorem}\label{thm:bbd_index}
For every $\Omega\subset\R^N$ bounded $C^1$ domain, $k\ge1$, $1<p<2^*-1$,
\[
\sup\setp_k(p,\Omega) < +\infty
\qquad\iff\qquad
p \ge 1+\frac{4}{N}.
\]
\end{theorem}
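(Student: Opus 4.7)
I prove the two implications separately.

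\emph{Direction $p<1+4/N\Rightarrow\sup\setp_k=+\infty$.} In the $L^2$-subcritical range, the exponent $N(p-1)/2$ of $\|\nabla v\|_{L^2}$ in \eqref{sobest} is strictly less than $2$, making $\Ecal$ bounded below and coercive on $\Mcal_\rho$ for every $\rho>0$. The direct method of the calculus of variations then yields a minimizer $U_\rho\in\Mcal_\rho$ of $\Ecal$, which solves \eqref{eq:main_prob_U} for some $\lambda_\rho\in\R$. Testing the equation against $U_\rho$ gives
\[
\int_\Omega\bigl(|\nabla U_\rho|^2+\lambda_\rho U_\rho^2-p|U_\rho|^{p+1}\bigr)\,dx = (1-p)\|U_\rho\|_{L^{p+1}}^{p+1}<0,
\]
so $m(U_\rho)\ge 1$. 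Conversely, being a minimizer on the codimension-one submanifold $\Mcal_\rho$, the Hessian of $\Ecal$ is nonnegative on the tangent space $T_{U_\rho}\Mcal_\rho$, forcing $m(U_\rho)\le 1$. Hence $\rho\in\setp_1\subseteq\setp_k$ for every $\rho>0$.

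\emph{Direction $p\ge 1+4/N\Rightarrow\sup\setp_k<+\infty$.} I would argue by contradiction, assuming a sequence of solutions $(U_n,\lambda_n)$ with $\rho_n\to+\infty$ and $m(U_n)\le k$ for every $n$. The plan is to construct, for $n$ sufficiently large, $k+1$ linearly independent test functions on which the quadratic form
\[
Q_n(v)=\int_\Omega\bigl(|\nabla v|^2+\lambda_n v^2-p|U_n|^{p-1}v^2\bigr)\,dx
\]
is strictly negative, thereby contradicting $m(U_n)\le k$. The key identity, obtained by multiplying the equation by $U_n\eta^2$ and integrating by parts (so that $\lambda_n$ cancels out), reads
\[
Q_n(U_n\eta)=(1-p)\int_\Omega\eta^2|U_n|^{p+1}\,dx+\int_\Omega U_n^2|\nabla\eta|^2\,dx,\qquad\eta\in C_c^\infty(\Omega);
\]
hence $Q_n(U_n\eta)<0$ is equivalent to $(p-1)\int\eta^2|U_n|^{p+1}>\int U_n^2|\nabla\eta|^2$. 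Choosing $k+1$ cutoffs $\eta_1,\dots,\eta_{k+1}$ with pairwise disjoint supports makes $U_n\eta_1,\dots,U_n\eta_{k+1}$ automatically linearly independent in $H^1_0(\Omega)$ (by unique continuation, $U_n\neq 0$ a.e. in $\Omega$).

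\emph{Scaling estimate and main obstacle.} For a standard cutoff $\eta_j$ supported in a region $\Omega_j\subset\Omega$ of diameter $\sim r$ with $|\nabla\eta_j|\lesssim 1/r$, H\"{o}lder's inequality yields
\[
\int\eta_j^2|U_n|^{p+1}\,dx\gtrsim m_j^{(p+1)/2}r^{-N(p-1)/2},\qquad \int U_n^2|\nabla\eta_j|^2\,dx\lesssim r^{-2}m_j,
\]
where $m_j:=\int_{\Omega_j}U_n^2\,dx$; the desired inequality then reduces to $m_j^{(p-1)/2}\gtrsim r^{N(p-1)/2-2}$. Crucially, the hypothesis $p\ge 1+4/N$ is precisely $N(p-1)/2\ge 2$, which makes the exponent of $r$ nonnegative, so that the local-mass threshold does not blow up as the scale shrinks. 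Producing $k+1$ disjoint regions $\Omega_j$ each carrying a fixed amount of mass---achievable whenever $\rho_n\to+\infty$ by a covering-bisection argument inside $\supp U_n$---therefore furnishes the $k+1$ required negative directions. The hard part will be making this geometric construction robust when $U_n$ concentrates; however, concentration amplifies $\int|U_n|^{p+1}$ at the corresponding small scale, which only helps the estimate, and the construction can be implemented uniformly after a diameter-to-mass optimization of $r$. As an auxiliary check, the min-max bound $\lambda_n\ge-\mu_{k+1}(\Omega)$, obtained by testing the linearized operator against the span of the first $k+1$ Dirichlet eigenfunctions of $-\Delta$, confirms that $\lambda_n$ stays bounded below and may be used to streamline the regularity and concentration analysis of $U_n$.
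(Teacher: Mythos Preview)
Your treatment of the subcritical direction is correct and coincides with the paper's: direct minimization of $\Ecal$ on $\Mcal_\rho$ yields a Morse-index-one solution for every $\rho>0$.

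For the direction $p\ge 1+4/N$, however, your sketch differs substantially from the paper and contains a genuine gap. The identity for $Q_n(U_n\eta)$ is correct, but the displayed lower bound
\[
\int \eta_j^2|U_n|^{p+1}\,dx \gtrsim m_j^{(p+1)/2}r^{-N(p-1)/2}
\]
is not justified: H\"older gives $\int_{\Omega_j}|U_n|^{p+1}\ge m_j^{(p+1)/2}|\Omega_j|^{-(p-1)/2}$ with $m_j=\int_{\Omega_j}U_n^2$, but your integral is weighted by $\eta_j^2$, which is small exactly on the transition annulus where $\nabla\eta_j\neq 0$. If the $L^2$-mass sits in that annulus, the lower bound fails while the upper bound $\int U_n^2|\nabla\eta_j|^2\lesssim r^{-2}m_j$ is saturated, and the sign of $Q_n(U_n\eta_j)$ is undetermined. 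To make the argument work you must arrange, \emph{simultaneously for $k+1$ disjoint cutoffs}, that a fixed fraction of $m_j$ lies in $\{\eta_j=1\}$; a bisection of $\Omega$ into $k+1$ equal-mass pieces does not provide this, since the slicing hyperplanes may pass precisely through concentration points of $U_n$, forcing the transition layers to carry the mass. A careful good-annulus/dyadic selection at each concentration scale would be needed, and it is not at all clear one can extract $k+1$ \emph{disjoint} such regions without first knowing that $U_n$ concentrates at at most $k$ points---which is the content of the paper's analysis. You explicitly flag this as ``the hard part'' but do not resolve it.

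The paper proceeds along entirely different lines. After rescaling to $u_n=\rho_n^{-1/2}U_n$ with $\|u_n\|_{L^2}=1$ and $\mu_n=\rho_n^{(p-1)/2}\to+\infty$, it shows: (i) $\alpha_n=\|\nabla u_n\|_{L^2}^2\to+\infty$; (ii) $\lambda_n\ge -\lambda_{\bar k}(\Omega)$ (your auxiliary check, refined) and then $\lambda_n\to+\infty$ via a first blow-up combined with Liouville-type theorems for the Lane--Emden equation; (iii) the functions $\lambda_n^{-1/(p-1)}U_n(\lambda_n^{-1/2}\cdot\,+P_n^i)$ converge in $C^1_{\loc}$ to finite-Morse-index solutions $V_i$ of $-\Delta V+V=|V|^{p-1}V$ on $\R^N$, with at most $\bar k$ concentration points $P_n^i$ and exponential decay away from them; (iv) integrating the decay estimate yields the exact asymptotics $\mu_n^{2/(p-1)}\lambda_n^{N/2-2/(p-1)}\to\sum_i\|V_i\|_{L^2}^2$, which forces $\mu_n\to 0$ when $p>1+4/N$ and $\mu_n$ bounded when $p=1+4/N$, contradicting $\mu_n\to+\infty$. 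The classification of entire finite-Morse-index solutions (Esteban--Lions, Farina, D\'avila--Dupaigne--Farina) is an essential external input that your direct approach would bypass---if it could be completed.
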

The proof of such result, which is outlined in Section \ref{sec:blow-up}, is obtained
by a detailed blow-up analysis of sequences of solutions with bounded Morse index, via
suitable a priori pointwise estimates (see \cite{MR2063399}). In this respect, the
regularity assumption on $\partial\Omega$ simplifies the treatment of possible
concentration phenomena towards the boundary. The argument, which holds
for solutions which possibly change sign, is inspired by \cite{MR2825606}, where the
case of positive solutions is treated.

Once Theorem \ref{thm:bbd_index} is established, in case $p\geq 1 + 4/N$ two questions arise, namely:
\begin{enumerate}
 \item is it possible to provide lower bounds for $\sup\setp_k$? Is it true that $\sup\setp_k$ is strictly increasing in $k$, or, at least, that $\sup\setp_k > \sup\setp_1$ for some $k$?
 \item is \eqref{eq:main_prob_U} solvable for every $\rho\in(0,\sup\setp_k)$, or at least can we characterize some subinterval of solvability?
\end{enumerate}
It is clear that both issues can be addressed by characterizing
values of $\rho$ for which existence (and multiplicity) of solutions with
bounded Morse index can be guaranteed. To this aim, it can be useful to restate
problem \eqref{eq:main_prob_U} as
\begin{equation}\label{eq:main_prob_u}
\begin{cases}
-\Delta u + \lambda u = \mu|u|^{p-1}u  & \text{in }\Omega,\\
\int_\Omega u^2\,dx = 1, \quad u=0  & \text{on }\partial\Omega,
\end{cases}
\qquad\text{where}\quad
\begin{cases}
U=\sqrt{\rho} u\\
\mu = \rho^{(p-1)/2},
\end{cases}
\end{equation}
where now $\mu>0$ is prescribed. Since
\begin{equation}
\label{Emu}
\text{both } \mathcal{E}_{\mu}(u) := \frac{1}{2}\int_{\om}|\nabla u|^2- \frac{\mu}{p+1}\int_{\om}| u|^{p+1}
\qquad
\text{and }\Mcal=\Mcal_1=\{u : \|u\|_{L^2(\Om)}=1\}
\end{equation}
are invariant under the $\Z_2$-action of the involution $u\mapsto -u$, solutions of
\eqref{eq:main_prob_u} can be found via min-max principles in the framework of index
theories (see e.g. \cite[Ch. II.5]{St_2008}). Notice that in the supercritical case
$\Ecal_\mu$ is not bounded from below on $\Mcal$.  Following \cite{MR3318740}, it can be
convenient to parameterize solutions to \eqref{eq:main_prob_u} with respect to the
$H^1_0$-norm, therefore we introduce the sets
\begin{equation}\label{eq:defBU}
\mathcal{B}_\alpha:=\left\{u\in \Mcal:\,\int_\Omega |\nabla u|^2\,dx<\alpha\right\},\quad\quad
\mathcal{U}_\alpha:=\left\{u\in \Mcal:\,\int_\Omega |\nabla u|^2\,dx=\alpha\right\}.
\end{equation}
Introducing the first Dirichlet eigenvalue of $-\Delta$ in $H^1_0(\Omega)$,
$\lambda_1(\Omega)$, we have that the sets above are non-empty whenever $\alpha>
\lambda_1(\Omega)$.
Since we are interested in critical points
having Morse index bounded from above, following \cite{MR968487,MR954951,MR991264} we
introduce the following notion of genus.
\begin{definition}\label{def:genus}
Let $A\subset H^1_0(\Omega)$ be a closed set, symmetric with respect to the origin (i.e.
$-A=A$). We define the \emph{genus} $\gamma$ of a $A$ as
\[
\gamma(A) := \sup\{m : \exists h \in C(\sphere^{m-1};A),\, h(-u)=-h(u)\}.
\]
Furthermore, we define
\[
\Sigma_{\alpha}=\{A\subset \overline{\mathcal{B}}_\alpha: A\text{ is closed and }-A=A\},
\qquad
\Sigma^{(k)}_{\alpha}=\{A\in \Sigma_{\alpha} : \gamma(A)\ge k\},
\]
\end{definition}
We remark that this notion of genus is different from the
classical one of \emph{Krasnoselskii genus}, which is well suited for estimates
of the Morse index from below, rather than above. Nonetheless, $\gamma$ shares with
the Krasnoselskii genus most of the main properties of an index
\cite{MR0163310,MR0065910}. In particular, by the Borsuk-Ulam Theorem, any set $A$
homeomorphic to the sphere $\sphere^{m-1} := \partial B_1 \subset \R^m$ has genus
$\gamma(A) = m$. Furthermore, we show in Section \ref{sec:2const} that $\Sigma^{(k)}_{\alpha}$ is not empty,
provided $\alpha>\lambda_k(\Omega)$ (the $k$-th Dirichlet eigenvalue of $-\Delta$ in $H^1_0(\Omega)$).

Equipped with this notion of genus we provide two different variational principles
for solutions of \eqref{eq:main_prob_u} (and thus of \eqref{eq:main_prob_U}). The first
one is based on a variational problem with \emph{two constraints}, which was exploited as the
main tool in proving the results in \cite{MR3318740}.
\begin{theorem}\label{thm:genus_2constr}
Let $k\geq1$ and $\alpha>\lambda_{k}(\Om)$. Then
\begin{equation}
\label{maxmin}
M_{\alpha,\,k}:= \sup_{A\in\Sigma^{(k)}_{\alpha}}\inf_{u\in A}\int_{\Omega}|u|^{p+1}
\end{equation}
is achieved on $\Ucal_\alpha$, and there exists a critical point
$u_\alpha\in \Mcal$ such
that, for some $\lambda_\alpha\in\R$ and $\mu_\alpha>0$,
\begin{equation}
\label{lagreq}
\int_\Omega|\nabla u_\alpha|^2 = \alpha\qquad\text{and}\qquad -\Delta u_\alpha+\lambda_\alpha\,u_\alpha=\mu_\alpha |u_\alpha|^{p-1}u_\alpha\quad \text{in }\Omega.
\end{equation}
\end{theorem}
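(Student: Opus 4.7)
My plan is to show $M_{\alpha,k}$ is attained by weak compactness, to force the maximizer onto $\Ucal_\alpha$ by a $\Z_2$-equivariant deformation compatible with the genus $\gamma$, and to read off \eqref{lagreq} from the Lagrange multiplier rule for two constraints.

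As preliminaries, the class $\Sigma^{(k)}_\alpha$ is nonempty whenever $\alpha>\lambda_k(\Om)$ (Section~\ref{sec:2const}), and the Gagliardo-Nirenberg inequality \eqref{sobest} gives $M_{\alpha,k}<\infty$ since every $u\in\overline{\mathcal{B}}_\alpha$ satisfies $\|\nabla u\|_{L^2}^2\le\alpha$ and $\|u\|_{L^2}=1$. Each $A\in\Sigma^{(k)}_\alpha$ is closed and bounded in $H^1_0(\Om)$, hence weakly sequentially compact, and since $p+1<2^*$ the functional $F(u):=\int_\Om|u|^{p+1}$ is weakly sequentially continuous, so $\min_{u\in A}F$ is attained at some $u_A\in A$. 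Picking a maximizing sequence $A_n$ with minimizers $u_n:=u_{A_n}$, a subsequence converges $u_n\rightharpoonup u_\alpha$ weakly in $H^1_0$ and strongly in $L^2$ and $L^{p+1}$, yielding $u_\alpha\in\Mcal$, $\int_\Om|\nabla u_\alpha|^2\le\alpha$, and $F(u_\alpha)=M_{\alpha,k}$.

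The main obstacle is to promote $u_\alpha$ to a constrained critical point on $\Mcal\cap\Ucal_\alpha$ satisfying \eqref{lagreq}. I would argue by contradiction via a $\Z_2$-equivariant deformation lemma on $\overline{\mathcal{B}}_\alpha\subset\Mcal$: if either $\int_\Om|\nabla u_\alpha|^2<\alpha$ or $u_\alpha$ is not a critical point of $F$ restricted to $\Mcal\cap\Ucal_\alpha$, one constructs an odd continuous self-map $\eta$ of $\overline{\mathcal{B}}_\alpha$ that strictly increases $F$ in a neighborhood of the minimizers of $A_n$; by the monotonicity of $\gamma$ under odd continuous maps, $\eta(A_n)\in\Sigma^{(k)}_\alpha$ with minimum strictly exceeding $M_{\alpha,k}$, contradicting the definition of the sup. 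The delicate point is that $\eta$ must simultaneously preserve the $L^2$-normalization, the inequality $\|\nabla u\|_{L^2}^2\le\alpha$, and odd symmetry, while still producing the needed uniform increase; subcriticality of $p+1$ ensures Palais-Smale on $\Mcal$, so this reduces to a careful adaptation of the classical deformation lemma, the new ingredient being the inequality constraint rather than a pure equality. Once $u_\alpha\in\Ucal_\alpha$ is a constrained critical point, the Lagrange multiplier rule for two constraints produces $\lambda',\mu'\in\R$ with $(p+1)|u_\alpha|^{p-1}u_\alpha=2\lambda' u_\alpha-2\mu'\Delta u_\alpha$, which becomes \eqref{lagreq} after setting $\mu_\alpha=(p+1)/(2\mu')$ and $\lambda_\alpha=\lambda'/\mu'$; the sign $\mu_\alpha>0$ follows because, by the same deformation argument, moving from $u_\alpha$ into $\mathcal{B}_\alpha$ strictly decreases $F$, so the gradient constraint is genuinely active from above.
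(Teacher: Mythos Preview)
Your overall architecture --- deformation plus Lagrange multipliers --- matches the paper's, but there is a genuine gap in your treatment of the sign $\mu_\alpha>0$, which is in fact the heart of the proof.

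Your justification reads: ``the sign $\mu_\alpha>0$ follows because, by the same deformation argument, moving from $u_\alpha$ into $\mathcal{B}_\alpha$ strictly decreases $F$.'' But this is precisely what has to be shown, and your earlier deformation only established that $u_\alpha$ is a constrained critical point on $\Ucal_\alpha$; it says nothing about the sign of the multiplier. Indeed, if $u_\alpha$ satisfies \eqref{lagreq} with $\mu_\alpha<0$, then the tangential gradients $\nabla_{T\Mcal}\bigl(\tfrac12\|\nabla u\|^2\bigr)$ and $\nabla_{T\Mcal}F$ are \emph{negatively} proportional at $u_\alpha$, which means that moving \emph{into} $\mathcal{B}_\alpha$ \emph{increases} $F$ --- the opposite of what you assert. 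A critical point with $\mu<0$ is a perfectly legitimate obstruction to a naive $F$-increasing deformation that merely preserves $\overline{\mathcal{B}}_\alpha$.

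The paper resolves this by arguing by contradiction at the level of the whole critical set: assume \emph{every} critical point of $F|_{\Ucal_\alpha}$ at level $M_{\alpha,k}$ has $\mu<0$. Then, at each such point, since the two tangential gradients are antiparallel (and, by a Palais--Smale argument, the angle between them is bounded away from zero in a neighborhood of the critical set), one can choose an odd tangent vector field $v(u)$ satisfying simultaneously
\[
\int_\Om \nabla u\cdot\nabla v(u)<-\delta
\qquad\text{and}\qquad
\int_\Om |u|^{p-1}u\,v(u)>\delta
\]
near the critical set. The associated flow then maps $\overline{\mathcal{B}}_\alpha$ into itself (first inequality) while strictly increasing $F$ (second inequality), which pushes any $A\in\Sigma^{(k)}_\alpha$ with $\inf_A F$ close to $M_{\alpha,k}$ to a set with strictly larger infimum --- the desired contradiction. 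The simultaneous control of both quantities is the key insight your sketch does not supply.

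A secondary issue: your preliminary construction of $u_\alpha$ as a weak limit of minimizers $u_{A_n}$ yields only a point with $F(u_\alpha)=M_{\alpha,k}$, not a critical point; the deformation argument produces a (possibly different) critical point directly, and the paper's Lemma~\ref{psball} is what forces any Palais--Smale sequence for $F$ on $\overline{\mathcal{B}}_\alpha$ onto $\Ucal_\alpha$ (ruling out in particular $\mu'=0$, i.e.\ critical points of $F$ on $\Mcal$ alone, which would force $u$ to be constant). Your sketch does not address the $\mu'=0$ degeneracy either.
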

As a matter of fact, the results in \cite{MR3318740} were obtained by a detailed analysis of the map $\alpha \mapsto \mu_\alpha$ in the case $k=1$, i.e.
when dealing with
\[
M_{\alpha,1} = \max\left\{\|u\|_{L^{p+1}}^{p+1} : \|u\|_{L^2}^2=1,\,\|\nabla u\|_{L^2}^2=\alpha \right\}.
\]
In the present paper we do not investigate the properties of the map
$\alpha \mapsto \mu_\alpha$ for general $k$, but we rather  prefer to exploit the
characterization of $M_{\alpha,k}$ in connection with
a second variational principle, which deals with only
\emph{one constraint}.
\begin{theorem}\label{thm:genus_1constr}
Let $1+{N}/{4}\leq p<2^*-1$. There exists a sequence $(\hat \mu_k)_k$
(depending on $\Omega$ and $p$) such that, for every $k\geq 1$ and $0<\mu<\hat \mu_k$, the
value
\begin{equation}
\label{infsuplev}
c_k:= \inf_{A\in\Sigma^{(k)}_{\alpha}}
\sup_{A}\Ecal_\mu,
\end{equation}
is achieved in $\mathcal{B}_\alpha$, for a suitable $\alpha>\lambda_{k}(\Om)$. Furthermore there exists a critical point $u_\mu\in \Mcal$ such
that, for some $\lambda_\mu\in\R$,
\[
-\Delta u_\mu+\lambda_\mu\,u_\mu=\mu |u_\mu|^{p-1}u_\mu\quad \text{in }\Omega,
\]
$\|\nabla u\|_{L^2}^2<\alpha$, and $m(u_\mu)\le k$.
\end{theorem}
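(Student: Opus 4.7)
The plan is to realize $c_k$ as a saddle-point value \emph{strictly inside} $\mathcal{B}_\alpha$, for a suitable $\alpha>\lambda_k(\Om)$ and for $\mu>0$ sufficiently small, and then extract a critical point by a deformation--compactness argument. A convenient concrete choice is $\alpha=2\lambda_k(\Om)$. For the upper bound on $c_k$, let $E_k\subset H^1_0(\Om)$ denote the span of the first $k$ Dirichlet eigenfunctions and set $A_0=E_k\cap\Mcal$. By the Borsuk--Ulam property recorded after Definition \ref{def:genus}, $\gamma(A_0)=k$; since $\|\nabla u\|_{L^2}^2\le\lambda_k<\alpha$ on $A_0$, we have $A_0\in\Sigma^{(k)}_\alpha$, and the negativity of the $L^{p+1}$-term yields $\sup_{A_0}\Ecal_\mu\le \lambda_k/2$. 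Hence $c_k\le\lambda_k/2$.

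For the complementary bound on the boundary, applying Gagliardo--Nirenberg \eqref{sobest} to $u\in\Ucal_\alpha$ (so $\|u\|_{L^2}=1$ and $\|\nabla u\|_{L^2}^2=\alpha$) gives
\[
\Ecal_\mu(u)\ge \frac{\alpha}{2}-\frac{\mu\,C_{N,p}}{p+1}\alpha^{N(p-1)/4}= \lambda_k-\frac{\mu\,C_{N,p}}{p+1}(2\lambda_k)^{N(p-1)/4}.
\]
Defining
\[
\hat\mu_k:=\frac{(p+1)\,\lambda_k(\Om)}{2\,C_{N,p}\,(2\lambda_k(\Om))^{N(p-1)/4}},
\]
we obtain for every $\mu\in(0,\hat\mu_k)$ the strict energy gap $\inf_{\Ucal_\alpha}\Ecal_\mu>\lambda_k/2\ge c_k$. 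Note that the assumption $p\ge 1+4/N$ enters only indirectly, via the fact that in this regime one must confine the analysis to $\mathcal{B}_\alpha$ since $\Ecal_\mu$ fails to be globally coercive on $\Mcal$.

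With the gap in hand, existence follows by a standard scheme. One first shows that $c_k$ is a critical value of $\Ecal_\mu|_\Mcal$ by the usual quantitative deformation argument: a pseudo-gradient flow on the Hilbert manifold $\Mcal$, cut off near $\Ucal_\alpha$ using the gap, is $\Z_2$-equivariant and therefore preserves $\Sigma^{(k)}_\alpha$ (continuous odd maps from $\sphere^{k-1}$ persist under composition with the flow, so $\gamma$ cannot decrease). Failure of $c_k$ to be critical would produce, from any near-optimal $A$, a new admissible $A'$ with $\sup_{A'}\Ecal_\mu<c_k$, contradicting \eqref{infsuplev}. Along a resulting Palais--Smale sequence $(u_n)\subset\overline{\mathcal{B}}_\alpha$, the $H^1_0$-boundedness together with the compact embedding $H^1_0(\Om)\hookrightarrow L^{p+1}(\Om)$ (valid since $p+1<2^*$ and $\Om$ is bounded) allows standard elliptic arguments to yield strong convergence to a critical point $u_\mu$ satisfying the stated Euler--Lagrange equation, and the gap forces $\|\nabla u_\mu\|_{L^2}^2<\alpha$.

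The bound $m(u_\mu)\le k$ is the content of the abstract index theory that motivates Definition \ref{def:genus}: unlike the Krasnoselskii genus, the dual notion $\gamma$ is tailored to produce upper Morse estimates. Heuristically, if $m(u_\mu)\ge k+1$, the constrained Hessian at $u_\mu$ would be negative definite on a $(k+1)$-dimensional subspace of $T_{u_\mu}\Mcal$; exponentiating would yield a small ``cap'' $D$ near $u_\mu$ with $\Ecal_\mu<c_k$ and with $\partial D$ an odd image of $\sphere^{k-1}$. Grafting $D$ into any quasi-optimal $A\in\Sigma^{(k)}_\alpha$ along this sphere would produce $A'\in\Sigma^{(k)}_\alpha$ with $\sup_{A'}\Ecal_\mu<c_k$, contradicting \eqref{infsuplev}. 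I expect this to be the main obstacle: carrying out the surgery so that the odd continuous map from $\sphere^{k-1}$ extends to the modified set requires the abstract machinery developed in \cite{MR968487,MR954951,MR991264}, which is what I would invoke to close the argument.
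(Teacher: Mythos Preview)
Your argument is correct and follows the same architecture as the paper: an upper bound on $c_k$ via the eigenfunction sphere $E_k\cap\Mcal$, a strict lower bound for the energy near the boundary $\Ucal_\alpha$, an equivariant deformation exploiting the gap (so that near-optimal $A$'s stay compactly inside $\mathcal{B}_\alpha$ and the flow cannot exit), Palais--Smale compactness, and finally the Morse index bound via the cited index theory \cite{MR968487,MR954951,MR991264}.

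The genuine difference lies in how the boundary lower bound is obtained and how $\hat\mu_k$ is defined. You work with the fixed choice $\alpha=2\lambda_k(\Om)$ and use Gagliardo--Nirenberg directly on $\Ucal_\alpha$. The paper instead routes the estimate through the two-constraint max--min value $M_{\alpha,k}$ of Theorem~\ref{thm:genus_2constr}: it compares $c_k$ with the auxiliary level $\hat c_k$ of \eqref{ass2} and, in Lemma~\ref{lem:ckMak}, shows that $\hat c_k\ge\frac12(\alpha-\tau)-\frac{\mu}{p+1}M_{\alpha,k}$, then defines
\[
\hat\mu_k=\sup_{\alpha>\lambda_k(\Om)}\frac{p+1}{2}\,\frac{\alpha-\lambda_k(\Om)}{M_{\alpha,k}-|\Om|^{-(p-1)/2}}.
\]
Since $M_{\alpha,k}\le C_{N,p}\alpha^{N(p-1)/4}$ (Remark~\ref{rem:MvsCNp}), your bound is recovered as a special case, and for the $k=1$ estimates of Theorem~\ref{thm:intro_GS} the paper itself falls back on exactly this inequality, so nothing is lost there. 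What your shortcut does forfeit is the refinement for higher $k$: the paper's detour through $M_{\alpha,k}$ is precisely what enables Proposition~\ref{thm:intro_3>1}, via the structural inequality $M_{\alpha,3}\le 2^{-(p-1)/2}M_{\alpha,1}$ of Lemma~\ref{lem:M3vsM1}. A direct Gagliardo--Nirenberg bound treats all $k$ alike and cannot produce the factor $2$ improvement of $\hat\rho_3$ over $\hat\rho_1$.
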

\begin{remark}
Of course, if $p<1+4/N$, the above theorem holds with $\hat\mu_k=+\infty$ for every $k$.
\end{remark}
\begin{corollary}
Let $\hat \rho_k := \hat \mu_k^{2/(p-1)}$. Then
\[
(0,\hat\rho_k) \subset \setp_k.
\]
\end{corollary}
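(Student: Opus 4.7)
The plan is to deduce this corollary from Theorem \ref{thm:genus_1constr} by undoing the scaling \eqref{eq:main_prob_u} that relates the mass-prescribed problem \eqref{eq:main_prob_U} to the one-constraint formulation with parameter $\mu$. Since that rescaling is explicit and bijective (via $U=\sqrt{\rho}u$, $\mu=\rho^{(p-1)/2}$), the corollary should follow almost directly once we verify that the Morse index is preserved under this transformation.

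Concretely, I would fix $\rho\in(0,\hat\rho_k)$ and set $\mu:=\rho^{(p-1)/2}$. Because the map $\rho\mapsto\rho^{(p-1)/2}$ is a strictly increasing bijection of $(0,+\infty)$ onto itself for $p>1$, the hypothesis $\rho<\hat\rho_k=\hat\mu_k^{2/(p-1)}$ gives $\mu<\hat\mu_k$. Theorem \ref{thm:genus_1constr} then produces a function $u_\mu\in\Mcal$ and a number $\lambda_\mu\in\R$ such that
\[
-\Delta u_\mu+\lambda_\mu u_\mu=\mu|u_\mu|^{p-1}u_\mu\quad\text{in }\Omega,
\qquad \|u_\mu\|_{L^2}=1, \qquad m(u_\mu)\le k.
\]
Setting $U:=\sqrt{\rho}\,u_\mu$, one checks $\int_\Omega U^2\,dx=\rho\int_\Omega u_\mu^2\,dx=\rho$ and, multiplying the equation for $u_\mu$ by $\sqrt{\rho}$ and using $\mu\,(\sqrt{\rho})^{p}=\rho^{(p-1)/2}\rho^{p/2}\,(\sqrt{\rho})^{1-p}\cdot(\sqrt{\rho})^{p-1}$ — more simply, just noting that $|U|^{p-1}U=\rho^{(p-1)/2}\sqrt{\rho}\,|u_\mu|^{p-1}u_\mu=\sqrt{\rho}\,\mu|u_\mu|^{p-1}u_\mu$ — that $U$ solves \eqref{eq:main_prob_U} with Lagrange multiplier $\lambda=\lambda_\mu$.

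The only point that deserves a line of justification is that $m(U)\le k$ as well, so that $\rho\in\setp_k$. This is immediate from the definition of Morse index given in the introduction: the quadratic form at $U$ reads
\[
\int_\Omega\!\bigl(|\nabla v|^2+\lambda_\mu v^2-p|U|^{p-1}v^2\bigr)\,dx=\int_\Omega\!\bigl(|\nabla v|^2+\lambda_\mu v^2-p\mu|u_\mu|^{p-1}v^2\bigr)\,dx,
\]
which is exactly the quadratic form controlling $m(u_\mu)$; hence the two Morse indices coincide and $m(U)=m(u_\mu)\le k$. There is no serious obstacle here — all the analytic work is packaged inside Theorem \ref{thm:genus_1constr} — the corollary is really a bookkeeping statement that the scaling \eqref{eq:main_prob_u} respects both the mass and the Morse index constraints defining $\setp_k$.
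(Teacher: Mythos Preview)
Your proof is correct and follows exactly the approach implicit in the paper, which does not even write out a proof for this corollary: it is an immediate consequence of Theorem~\ref{thm:genus_1constr} via the change of variables \eqref{eq:main_prob_u}, together with the observation (which you spell out) that $|U|^{p-1}=\mu|u_\mu|^{p-1}$ so the two Morse-index quadratic forms coincide.
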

The link between Theorem \ref{thm:genus_2constr} and Theorem \ref{thm:genus_1constr} is that we can provide explicit estimates
of $\hat \mu_k$ (and hence of $\hat\rho_k$) in terms of the map $\alpha\mapsto M_{\alpha,k}$ (see Section \ref{sec:1const}).

We stress that the above
results hold for any Lipschitz $\Omega$. As a first consequence, this allows to extend the
existence result in \cite{MR3318740} to non-radial domains.
\begin{theorem}\label{thm:intro_GS}
For every $0<\rho<\hat\rho_1=\hat\rho_1(\Omega,p)$ problem \eqref{eq:main_prob_U} admits a solution which is a local minimum of
the energy $\Ecal$ on $\Mcal_\rho$. In particular, $U$ is positive, has Morse index one and the associated solitary wave is orbitally
stable.

Furthermore, for every Lipschitz $\Omega$,
\begin{itemize}
\item $\displaystyle 1<p<1+\frac{4}{N} \implies \hat\rho_1\left(\Omega,p\right)
= +\infty$,
\item $\displaystyle p=1+\frac{4}{N} \implies \hat\rho_1\left(\Omega,p\right)
\geq  \|Z_{N,p}\|^2_{L^2(\R^N)}$,
\item $\displaystyle 1+\frac{4}{N}<p<2^*-1 \implies \hat\rho_1\left(\Omega,p\right)
\geq D_{N,p} \lambda_1(\Omega)^{\frac{2}{p-1}-\frac{N}{2}}$,
\end{itemize}
where the universal constant $D_{N,p}$ is explicitly written in terms of $N$ and $p$ in
Section \ref{sec:1const}.
\end{theorem}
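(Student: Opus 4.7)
The plan is to specialize Theorem \ref{thm:genus_1constr} to $k=1$ and then read off the required properties. For $k=1$ every antipodal pair $\{u,-u\}\subset\overline{\mathcal{B}}_\alpha\cap\Mcal$ lies in $\Sigma^{(1)}_\alpha$, so the min--max in \eqref{infsuplev} collapses to
\[
c_1 \;=\; \inf_{u\in\overline{\mathcal{B}}_\alpha\cap\Mcal}\Ecal_\mu(u).
\]
Theorem \ref{thm:genus_1constr} asserts that for every $0<\mu<\hat\mu_1$ this infimum is attained at some $u_\mu$ with $\|\nabla u_\mu\|_{L^2}^2<\alpha$. Thus $u_\mu$ lies in the interior of $\overline{\mathcal{B}}_\alpha$ inside $\Mcal$, so it is a \emph{local} minimum of $\Ecal_\mu$ on the whole of $\Mcal$. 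Since the rescaling $U=\sqrt{\rho}\,u_\mu$ with $\mu=\rho^{(p-1)/2}$ satisfies $\Ecal(\sqrt{\rho}\,u)=\rho\,\Ecal_\mu(u)$ and maps $\Mcal$ onto $\Mcal_\rho$, it transfers $u_\mu$ into a local minimum $U$ of $\Ecal$ on $\Mcal_\rho$ for every $\rho\in(0,\hat\rho_1)$.

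For the qualitative properties I would argue as follows. The symmetry $u\mapsto|u|$ preserves both $\Ecal_\mu$ and the constraint $\|u\|_{L^2}=1$, so one may replace $u_\mu$ by $|u_\mu|$ and assume $u_\mu\ge 0$; the strong maximum principle applied to $-\Delta u_\mu+\lambda_\mu u_\mu=\mu u_\mu^{p}$ then yields $u_\mu>0$ in $\Om$. For the Morse index, the quadratic form $Q[v]:=\int_\Om(|\nabla v|^2+\lambda v^2-p|U|^{p-1}v^2)\,dx$ is non-negative on the tangent space $T_U\Mcal_\rho=\{v:\int_\Om Uv\,dx=0\}$ by local minimality, which gives $m(U)\le 1$; on the other hand, testing $Q$ against $U$ itself and using the equation gives $Q[U]=(1-p)\int_\Om|U|^{p+1}\,dx<0$, so $m(U)\ge 1$ and hence $m(U)=1$. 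Orbital stability of the standing wave $e^{i\lambda t}U(x)$ then follows from the classical Cazenave--Lions compactness argument for $H^1_0$-local minimizers under a mass constraint (compactness being automatic on the bounded domain).

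The three quantitative bounds on $\hat\rho_1$ all rest on comparing the interior trial value $\Ecal_\mu(\phi_1)\le\lambda_1(\Om)/2$, with $\phi_1$ the $L^2$-normalized first Dirichlet eigenfunction, with the boundary lower bound coming from the Gagliardo--Nirenberg inequality \eqref{sobest} on $\Ucal_\alpha$:
\[
\Ecal_\mu(u)\;\ge\; f(\alpha)\;:=\;\frac{\alpha}{2}-\frac{\mu\,C_{N,p}}{p+1}\,\alpha^{q},\qquad q:=\frac{N(p-1)}{4}.
\]
When $q<1$ (subcritical case) $f$ is coercive in $\alpha$ for every $\mu>0$, so a global minimum of $\Ecal_\mu$ exists on $\Mcal$ and $\hat\rho_1=+\infty$. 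When $q=1$ (critical case) coercivity is equivalent to $\mu<(p+1)/(2C_{N,p})$, which, raised to the power $2/(p-1)=N/2$, gives the stated bound $\|Z_{N,p}\|_{L^2(\R^N)}^2$. When $q>1$ (supercritical case) the functional is no longer bounded below on $\Mcal$, and the main point is to exhibit some admissible $\alpha>\lambda_1(\Om)$ for which the mountain-pass--like separation $f(\alpha)>\lambda_1(\Om)/2$ holds, i.e.
\[
\mu\;<\;\frac{p+1}{2C_{N,p}}\cdot\frac{\alpha-\lambda_1(\Om)}{\alpha^{q}}.
\]
A straightforward one-variable optimization yields the maximizer $\alpha_*=q\lambda_1(\Om)/(q-1)$ and the explicit lower bound $\hat\mu_1\ge \tfrac{(p+1)(q-1)^{q-1}}{2C_{N,p}\,q^q}\,\lambda_1(\Om)^{1-q}$, which raised to $2/(p-1)$ produces the claimed $\hat\rho_1\ge D_{N,p}\,\lambda_1(\Om)^{2/(p-1)-N/2}$ with an explicit $D_{N,p}$. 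This optimization, together with the test function $\phi_1$, is the only genuinely computational step; everything else (positivity, Morse index, stability, and the subcritical/critical bounds) reduces to classical arguments once Theorem \ref{thm:genus_1constr} is in hand.
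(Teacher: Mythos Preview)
Your argument is correct and coincides with the paper's proof: the paper likewise reduces $c_1$ to a local minimum, deduces positivity by symmetry, Morse index one by testing the second variation against $u$ and using $H^1_0=\spann\{u\}\oplus T_u\Mcal$, and orbital stability via Cazenave--Lions. For the quantitative bounds the paper routes the comparison through Lemma~\ref{lem:ckMak} (with $M_{\alpha,1}\le C_{N,p}\alpha^{N(p-1)/4}$ from Remark~\ref{rem:MvsCNp}) rather than invoking Gagliardo--Nirenberg directly on $\Ucal_\alpha$, but after dropping the lower-order $|\Omega|^{-(p-1)/2}$ term this yields exactly your inequality $\mu<\tfrac{p+1}{2C_{N,p}}(\alpha-\lambda_1)\alpha^{-q}$ and the same optimization.
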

\begin{remark}\label{rem:introGS}
Of course, in the subcritical and critical cases, $c_1$ is actually a global minimum.
Furthermore, the lower bound for the supercritical case agrees with that of the critical one
since, as shown in Section  \ref{sec:1const}, $D_{N,1+4/N} = \|Z_{N,p}\|^2_{L^2(\R^N)}$
(and $\lambda_1(\Omega)$ is raised to the $0^{\text{th}}$-power). Notice that the estimate for the supercritical case is new also in the case $\Omega=B_1$.
\end{remark}
We observe that the exponent of $\lambda_1(\Omega)$ in the supercritical threshold is negative,
therefore such threshold decreases with the size of $\Omega$.

Once the first thresholds have been estimated, we turn to the higher ones: by exploiting
the relations between $M_{\alpha,k}$ and $c_k$, we can show that the thresholds obtained
for Morse index one--solutions in Theorem \ref{thm:intro_GS} can be increased, by
considering higher Morse index--solutions, at least for some exponent.
\begin{proposition}\label{thm:intro_3>1}
For every $\Omega$ and $1<p<2^*-1$,
\[
\hat\rho_3\left(\Omega,p\right) \geq 2 \cdot D_{N,p} \lambda_3(\Omega)^{\frac{2}{p-1}-\frac{N}{2}}.
\]
\end{proposition}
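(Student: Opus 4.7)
The plan is to apply Theorem \ref{thm:genus_1constr} with $k=3$ and to bound $\hat\mu_3$ from below explicitly, using a genus-three test set in $\Sigma^{(3)}_\alpha$ coming from a two-bubble ansatz; the conclusion then follows at once from $\hat\rho_3=\hat\mu_3^{2/(p-1)}$.

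First, I would use the link between Theorems \ref{thm:genus_2constr} and \ref{thm:genus_1constr} discussed in Section \ref{sec:1const}. Testing the inf-sup \eqref{infsuplev} on the maximizer $A^\star\subset\Ucal_\alpha$ of $M_{\alpha,k}$ provided by Theorem \ref{thm:genus_2constr} yields
\[
c_k\le \frac{\alpha}{2}-\frac{\mu}{p+1}\,M_{\alpha,k},
\]
and combining this with a Gagliardo--Nirenberg control of $\sup_{\mathcal{B}_\alpha}\Ecal_\mu$ that forces the inf-sup level to be attained strictly inside $\mathcal{B}_\alpha$ produces, after optimization in $\alpha>\lambda_k(\Omega)$, an explicit lower bound on $\hat\mu_k$ in terms of $\lambda_k(\Omega)$ and the map $\alpha\mapsto M_{\alpha,k}$. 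Applied with $k=1$ and $A^\star=\{\pm\varphi_1\}$, this mechanism is exactly what produces the bound $\hat\rho_1\geq D_{N,p}\lambda_1(\Omega)^{2/(p-1)-N/2}$ of Theorem \ref{thm:intro_GS}.

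Second, to recover both $\lambda_3(\Omega)$ and the extra factor $2$ at $k=3$, I would choose $\alpha$ slightly larger than $\lambda_3(\Omega)$ and construct a genus-three test set from two disjoint bubbles. By the variational characterization of $\lambda_3(\Omega)$, one can find two disjoint subdomains $\Omega_+,\Omega_-\subset\Omega$ whose first Dirichlet eigenvalues lie arbitrarily close to $\lambda_3(\Omega)$. On each $\Omega_\pm$ I would take a single-bubble optimizer $u_\pm\in H^1_0(\Omega_\pm)$ of the same type used in the $k=1$ estimate, rescaled to have mass $1/2$, and glue them with an auxiliary $L^2$-orthogonal direction $\psi$ into the set
\[
A:=\left\{c_+u_++c_-u_-+c_0\psi\;:\;c_+^2+c_-^2+c_0^2=1\right\}\simeq\sphere^2,
\]
so that $\gamma(A)=3$ by the Borsuk--Ulam theorem. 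Disjointness of supports gives the clean identity $\|c_+u_++c_-u_-\|_{L^{p+1}}^{p+1}=|c_+|^{p+1}\|u_+\|_{L^{p+1}}^{p+1}+|c_-|^{p+1}\|u_-\|_{L^{p+1}}^{p+1}$, and applying the single-bubble Gagliardo--Nirenberg estimate to each summand accounts for the factor $2$ in the resulting lower bound on $M_{\alpha,3}$. Substituting this estimate into the explicit formula for $\hat\mu_3$ and passing to $\hat\rho_3=\hat\mu_3^{2/(p-1)}$ then yields the claimed inequality.

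The main obstacle is controlling $\inf_A\|u\|_{L^{p+1}}^{p+1}$ uniformly over the whole sphere $A$, and not only at the two-bubble points $c_0=0$: in particular, the auxiliary direction $\psi$ must be chosen so that no element of $A$ is forced to have small $L^{p+1}$-mass. This careful balancing is precisely the reason that the improved estimate is carried out here at $k=3$ rather than at an arbitrary $k$.
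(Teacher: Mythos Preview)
Your approach has the key inequality running in the wrong direction. In the paper's scheme (Lemma~\ref{lem:ckMak} and the definition of $\hat\mu_k$ at the end of the proof of Theorem~\ref{thm:genus_1constr}),
\[
\hat\mu_k=\sup_{\alpha>\lambda_k(\Omega)}\frac{p+1}{2}\,\frac{\alpha-\lambda_k(\Omega)}{M_{\alpha,k}-|\Omega|^{-(p-1)/2}},
\]
so a larger $\hat\mu_k$ requires a \emph{smaller} $M_{\alpha,k}$: one needs an \emph{upper} bound on $M_{\alpha,k}$. Constructing a specific $A\in\Sigma^{(3)}_\alpha$ as you do can only yield $M_{\alpha,3}\geq\inf_A\int|u|^{p+1}$, i.e.\ a lower bound, which is useless here. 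Concretely, if you insert your test-set estimate $c_3\le \alpha/2-\mu M_{\alpha,3}/(p+1)$ and your Gagliardo--Nirenberg boundary control $\hat c_3\ge(\alpha-\tau)/2-\mu C_{N,p}\alpha^{N(p-1)/4}/(p+1)$, the difference $\hat c_3-c_3$ is governed by $M_{\alpha,3}-C_{N,p}\alpha^{N(p-1)/4}\le0$ (Remark~\ref{rem:MvsCNp}), so no gap survives. Note also that the upper bound on $c_k$ used in Lemma~\ref{lem:ckMak} comes from the eigenfunction set $\tilde A\subset\mathcal{B}_{\lambda_k+\varepsilon}$, which is what produces $\lambda_k$ in the formula; a test set living on $\Ucal_\alpha$ cannot play that role.

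The paper obtains the factor $2$ by the opposite mechanism: Lemma~\ref{lem:M3vsM1} proves the \emph{upper} bound $M_{\alpha,3}\le 2^{-(p-1)/2}M_{\alpha,1}$, by applying Borsuk--Ulam to an \emph{arbitrary} $A\in\Sigma^{(3)}_\alpha$ to find a point $u_a\in A$ whose positive and negative parts have equal $L^2$ and $L^{p+1}$ masses, so that $\sqrt{2}\,u_a^\pm\in\overline{\mathcal B}_\alpha$ can be compared to $M_{\alpha,1}$. Combined with $M_{\alpha,1}\le C_{N,p}\alpha^{N(p-1)/4}$ and the optimization in $\alpha$ from the proof of Theorem~\ref{thm:intro_GS}, this yields $\hat\mu_3\ge 2^{(p-1)/2}D_{N,p}^{(p-1)/2}\lambda_3(\Omega)^{1-N(p-1)/4}$ and hence the claim. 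Your two-bubble construction, even if the $\psi$-direction could be controlled, feeds the wrong side of the inequality.
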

\begin{remark}
In the critical case, the lower bound for $\hat\rho_3$ provided by Proposition \ref{thm:intro_3>1} is twice that for $\hat\rho_1$ obtained in Theorem \ref{thm:intro_GS}.
By continuity, the estimate for $\hat\rho_3$ is larger than that for $\hat\rho_1$ also when
$p$ is supercritical, but not too large. To quantify such assertion, we can use Yang's inequality
\cite{MR1894540,MR2262780}, which implies that for every $\Omega$ it holds
\[
\lambda_3(\Omega)\leq \left(1+\frac{N}{4}\right)2^{2/N} \lambda_1(\Omega).
\]
We deduce that $2 \cdot D_{N,p} \lambda_3(\Omega)^{\frac{2}{p-1}-\frac{N}{2}}
\geq  D_{N,p} \lambda_1(\Omega)^{\frac{2}{p-1}-\frac{N}{2}}$ whenever
\[
p\leq 1+\frac{4}{N} + \frac{8}{N^2\log_2\left(1+\frac{4}{N}\right)}.
\]
In particular, the physically relevant case $N=3$, $p=3$ is covered. Furthermore, if
$N\geq 7$, the above condition holds for every $p<2^*-1$.
\end{remark}
Beyond existence results for
\eqref{eq:main_prob_U}, also multiplicity results can be achieved. A first general
consideration, with this respect, is that Theorem \ref{thm:genus_1constr} holds true
also when using the standard Krasnoselskii genus instead of $\gamma$; this allows to
obtain critical points having Morse index bounded from below (see
\cite{MR968487,MR954951,MR991264}), and therefore to obtain infinitely many solutions,
at least when $\rho$ is less than some threshold. More specifically, we can also
prove the existence of a second solution in the supercritical case, thus extending to
any $\Omega$ the multiplicity result obtained in \cite{MR3318740} for the ball.
Indeed, on the one
hand, in the supercritical case $\Ecal_\mu$ is unbounded from below; on the other hand
the solution obtained in Theorem \ref{thm:genus_1constr}, for $k=1$, is a local minimum.
Thus the Mountain Pass Theorem \cite{MR0370183} applies on $\mathcal{M}$, and a second solution can be
found for $\mu<\hat\mu_1$, see Proposition \ref{mpcritlev} for further details (and also
Remark \ref{rem:further_crit_lev} for an analogous construction for $k\ge2$).

To conclude this introduction, let us mention that the explicit lower bounds obtained in
Theorem \ref{thm:intro_GS} can be easily applied in order to gain
much more information also in the case of special domains, as those considered in
Remark \ref{rem:specialdomains}. For instance, we can prove then following.
\begin{theorem}\label{pro:symm}
Let $\Omega=B$ be a ball in $\R^N$. Then
\[
p<1+\frac{4}{N-1}
\quad\implies\quad
\text{\eqref{eq:main_prob_U} admits a solution for every }\rho>0.
\]
An analogous result holds when $\Omega=R$ is a rectangle, without further restrictions on
$p<2^*-1$.
\end{theorem}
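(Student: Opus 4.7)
Both cases exploit the necklace construction sketched in Remark~\ref{rem:specialdomains}, made quantitative through the explicit lower bound on $\hat\rho_1$ provided by Theorem~\ref{thm:intro_GS}. The common scheme is to tile $\Omega$ into $m$ congruent cells, apply Theorem~\ref{thm:intro_GS} on one cell to obtain a positive Morse--index--one solution there, and extend it by odd reflection across the internal interfaces to produce a sign--changing weak solution of \eqref{eq:main_prob_U} on $\Omega$ carrying $m$ times the mass of the original one. The subcritical range $p<1+4/N$ is already covered directly by Theorem~\ref{thm:intro_GS}, so I focus on $p\geq 1+4/N$.

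\emph{Rectangle.} Given $R=\prod_i(a_i,b_i)$ and $k\in\N$, subdivide each side into $k$ equal parts, producing $m=k^N$ isometric copies of $R/k$. Since $\lambda_1(R/k)=k^2\lambda_1(R)$, Theorem~\ref{thm:intro_GS} permits any mass on one cell up to $\hat\rho_1(R/k,p)\ge D_{N,p}\lambda_1(R)^{\frac{2}{p-1}-\frac{N}{2}}k^{\frac{4}{p-1}-N}$, and reflection produces masses on $R$ up to
\[
k^N\,\hat\rho_1(R/k,p) \;\ge\; D_{N,p}\,\lambda_1(R)^{\frac{2}{p-1}-\frac{N}{2}}\,k^{4/(p-1)}.
\]
Since $4/(p-1)>0$ for every $p>1$, this bound diverges as $k\to\infty$, covering every $\rho>0$.

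\emph{Ball.} Split $\R^N=\R^2\times\R^{N-2}$ and, for even $k\geq 2$, tile $B$ into $k$ dihedral wedges $W_k^{(j)}$ of opening $2\pi/k$ around the axis $\{0\}\times\R^{N-2}$. Each $W_k^{(j)}$ is Lipschitz. A test function of the form $\phi(r,z)\sin(k\theta/2)$, with $(r,\theta)$ the polar coordinates in $\R^2$ and $\phi$ a fixed smooth cutoff supported away from $r=0$ and from $\partial B$, has Rayleigh quotient dominated by the angular term $(k/2)^2 r^{-2}$, yielding $\lambda_1(W_k^{(j)})\le C_N k^2$ for some $C_N$ depending only on $N$. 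Applying Theorem~\ref{thm:intro_GS} on $W_k^{(1)}$ and reflecting oddly across each of the $k$ bounding half--hyperplanes (consistent because $k$ is even) one obtains a weak solution on $B$ of mass up to
\[
k\,\hat\rho_1(W_k^{(1)},p) \;\ge\; D_{N,p}\,(C_N k^2)^{\frac{2}{p-1}-\frac{N}{2}}\,k \;=\; \widetilde D_{N,p}\,k^{1+\frac{4}{p-1}-N}.
\]
The exponent $1+4/(p-1)-N$ is strictly positive exactly when $p<1+4/(N-1)$, in which case the threshold diverges with $k$ and every $\rho>0$ is covered.

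\emph{Main obstacle.} The only non--routine step is the estimate $\lambda_1(W_k^{(j)})\le C_N k^2$: since the angular term $(k/2)^2 r^{-2}$ is singular at the rotation axis, the cutoff $\phi$ must be kept at a fixed positive distance from $r=0$ (and from $\partial B$) uniformly in $k$, so that the resulting constant $C_N$ is genuinely $k$--independent. A second, standard point is to check that the odd reflection of $U$ belongs to $H^1_0(B)$ and solves \eqref{eq:main_prob_U} weakly across the internal half--hyperplanes and the singular axis; both facts follow from the classical reflection principle for Dirichlet solutions of semilinear elliptic equations, since each wedge is Lipschitz and $U$ vanishes on its flat faces.
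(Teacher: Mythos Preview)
Your proposal is correct and follows essentially the same tiling-and-reflection scheme as the paper (Section~\ref{sec:symm}). The only difference is in the eigenvalue bound $\lambda_1(W_k)\le C_N k^2$: you estimate the Rayleigh quotient directly with a test function $\phi(r,z)\sin(k\theta/2)$ (which forces you to keep $\phi$ away from the axis $r=0$), whereas the paper simply inscribes in each wedge a ball of radius comparable to $1/k$ and invokes domain monotonicity of $\lambda_1$, a slightly more elementary route that sidesteps the $1/r^2$ singularity altogether.
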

Therefore our starting problem in $\Omega=B$ can be solved for any mass value also in the critical and
supercritical regime, at least for $p$ smaller than this further critical exponent $1+4/(N-1) > 1+ 4/N$.
Of course, higher masses require higher Morse index--solutions. In particular, since by \cite{MR3318740}
we know that $\setp_1(B,1+4/N) = (0,\|Z_{N,p}\|_{L^2})$, we have that for
larger masses, even though no positive solution exists, nodal solutions with higher
Morse index can be obtained: in such cases \eqref{eq:main_prob_U} admits \emph{nodal ground
states with higher Morse index}.

The paper is structured as follows: in Section \ref{sec:blow-up} we perform a blow-up analysis of
solutions with bounded Morse index, in order to prove Theorem \ref{thm:bbd_index}; Section \ref{sec:2const}
is devoted to the analysis of the variational problem with two constraints \eqref{maxmin}
and to the proof of Theorem \ref{thm:genus_2constr};
that of Theorems \ref{thm:genus_1constr}, \ref{thm:intro_GS} and Proposition
\ref{thm:intro_3>1} is developed
in Section \ref{sec:1const}, by means of the variational problem with one constraint
\eqref{infsuplev}; finally, Section
\ref{sec:symm} contains the proof of Theorem \ref{pro:symm}.

\textbf{Notation.} We use the standard notation
$\{\varphi_k\}_{k\geq1}$ for a basis of eigenfunctions of the Dirichlet laplacian in $\Omega$,
orthogonal in $H^1_0(\Omega)$ and orthonormal in $L^2(\Omega)$. Such functions are ordered in such
a way that the corresponding eigenvalues $\lambda_k(\Omega)$ satisfy
\[
0<\lambda_1(\Omega)<\lambda_2(\Omega)\leq\lambda_3(\Omega)\leq\dots,
\]
and $\varphi_1$ is chosen to be positive on $\Omega$. $C_{N,p}$ denotes the universal constant in
the Gagliardo-Nirenberg inequality \eqref{sobest}, which is achieved (uniquely, up to translations and dilations)
by the positive, radially symmetric function $Z_{N,p}\in H^1(\R^N)$, with
\[
\|Z_{N,p}\|^2_{L^2(\R^N)}=\left(\frac{p+1}{2C_{N,p}}\right)^{N/2}.
\]
Finally, $C$ denotes every (positive) constant we need not to specify, whose value may change also within the same formula.

\section{Blow-up analysis of solutions with bounded Morse index}\label{sec:blow-up}

Throughout this section we will deal with a sequence $\{(u_n,\mu_n,\lambda_n)\}_n \subset H^1_0(\Omega)\times\R^+\times\R$ satisfying
\begin{equation}\label{eq:auxiliary_n}
-\Delta u_n+\lambda_n u_n=\mu_n |u_n|^{p-1}u_n,\qquad\int_\Omega u_n^2\, dx=1,\qquad \int_\Omega |\nabla u_n|^2\, dx=:\alpha_n.
\end{equation}
To start with, we recall the following result (actually, in \cite{MR3318740}, the result is stated for positive solution, but the proof
does not require such assumption).
\begin{lemma}[{\cite[Lemma 2.5]{MR3318740}}]\label{lemma:case_alpha_n_bounded}
Take a sequence $\{(u_n,\mu_n,\lambda_n)\}_n$ as in \eqref{eq:auxiliary_n}. Then
\[
\{\alpha_n\}_n \text{ bounded}
\qquad\implies\qquad
\{\lambda_n\}_n,\,\{\mu_n\}_n\text{ bounded}.
\]
\end{lemma}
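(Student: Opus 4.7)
The plan is to argue by contradiction: assume $\alpha_n$ stays bounded while the conclusion fails. As a preliminary, testing \eqref{eq:auxiliary_n} against $u_n$ gives the identity
\[
\alpha_n + \lambda_n = \mu_n \|u_n\|_{L^{p+1}(\Om)}^{p+1}.
\]
Since $\{u_n\}$ is bounded in $H^1_0(\Om)$, Sobolev embedding (subcriticality $p+1<2^*$) yields $\|u_n\|_{L^{p+1}}\le C$, while H\"older applied to $\|u_n\|_{L^2}=1$ yields the opposite bound $\|u_n\|_{L^{p+1}}\ge |\Om|^{-(p-1)/(2(p+1))}>0$. Together with the obvious $\lambda_n\ge -\alpha_n$, this shows that $\{\mu_n\}$ bounded is equivalent to $\{\lambda_n\}$ bounded, and if both diverge then $\lambda_n/\mu_n$ stays bounded and bounded away from zero. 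Hence it suffices to rule out $\mu_n\to +\infty$.

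Suppose therefore $\mu_n\to+\infty$. Pass to a subsequence so that $\lambda_n/\mu_n\to c>0$, $u_n\rightharpoonup u$ weakly in $H^1_0(\Om)$, and $u_n\to u$ strongly in $L^2(\Om)\cap L^{p+1}(\Om)$, by Rellich--Kondrachov. Strong $L^2$-convergence gives $\|u\|_{L^2}=1$, so $u\not\equiv 0$. Divide \eqref{eq:auxiliary_n} by $\mu_n$ to obtain
\[
-\frac{\Delta u_n}{\mu_n}+\frac{\lambda_n}{\mu_n}\,u_n=|u_n|^{p-1}u_n.
\]
The first term vanishes in $H^{-1}(\Om)$ since $\|\Delta u_n\|_{H^{-1}}\le\|u_n\|_{H^1_0}\le C$; the second converges to $c\,u$ in $L^2(\Om)$; the right-hand side converges to $|u|^{p-1}u$ in $L^{(p+1)/p}(\Om)$ by continuity of the associated Nemitskii operator. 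Passing to the limit yields, a.e.\ in $\Om$,
\[
|u|^{p-1}u=c\,u,\qquad\text{i.e.}\qquad u(x)\in\{0,\pm c^{1/(p-1)}\}.
\]

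To close the argument, I would use the standard fact that the weak gradient of a Sobolev function vanishes a.e.\ on each of its level sets; since $u$ takes only three possible values, this forces $\nabla u=0$ a.e.\ in $\Om$. Connectedness of $\Om$ then makes $u$ a.e.\ constant, and $u\in H^1_0(\Om)$ forces $u\equiv 0$, contradicting $\|u\|_{L^2}=1$. Consequently $\mu_n$ is bounded, and then $\lambda_n=\mu_n\|u_n\|_{L^{p+1}}^{p+1}-\alpha_n$ is bounded as well.

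The main obstacle, conceptually, is identifying the correct rescaling that makes the passage to the limit both tractable and informative: dividing by $\mu_n$ (rather than by $\lambda_n$, or working in the original scale) is what kills the Laplacian in the limit and turns the PDE into a purely pointwise algebraic constraint on $u$. Once this is done, the structure of Sobolev functions on a connected domain immediately yields the contradiction, regardless of the sign of $u_n$.
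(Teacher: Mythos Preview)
Your argument is correct. The paper does not give its own proof of this lemma but simply quotes \cite[Lemma~2.5]{MR3318740}, noting that the positivity assumption made there is inessential; your compactness argument (divide by $\mu_n$, pass to the weak limit, and use that a Sobolev function taking finitely many values must have vanishing gradient, hence be zero in $H^1_0$) is exactly the natural route and is in the same spirit as the cited proof.
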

Next we turn to the study of sequences having arbitrarily large $H^1_0$-norm. In particular, we will focus on sequences of solutions having a common upper
bound on the Morse index
\[
m(u_n) = \max\left\{k : \begin{array}{l}
\exists V\subset H^1_0(\Omega),\,\dim(V)= k:\forall v\in V\setminus\{0\}\smallskip\\
\displaystyle\int_\Omega |\nabla v|^2 + \lambda_n v^2 - p\mu_n|u_n|^{p-1}v^2\,dx<0
\end{array}
\right\}.
\]
Throughout this section we will assume that
\begin{equation}\label{eq:mainass_secMorse}
\text{the sequence }\{(u_n,\mu_n,\lambda_n)\}_n\text{ satisfies \eqref{eq:auxiliary_n},
with }\alpha_n\to+\infty\text{ and }m(u_n)\leq \bar k,
\end{equation}
for some $\bar k\in\N$ not depending on $n$.
\begin{lemma}\label{lem:lambda_bdd_below}
Let \eqref{eq:mainass_secMorse} hold. Then
\(
\lambda_n \geq -\lambda_{\bar k}(\Omega).
\)
\end{lemma}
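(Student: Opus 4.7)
The plan is to argue by contradiction: assume $\lambda_n < -\lambda_{\bar k}(\Omega)$ for some $n$ and to exhibit a subspace of $H^1_0(\Omega)$ of dimension $\bar k+1$ on which the Morse quadratic form
\[
Q_n(v):= \int_\Omega |\nabla v|^2 + \lambda_n v^2 - p\mu_n|u_n|^{p-1}v^2\,dx
\]
is strictly negative, contradicting $m(u_n)\le\bar k$. The candidate subspace is $W := V_{\bar k} + \R u_n$, where $V_{\bar k}:=\spann(\varphi_1,\ldots,\varphi_{\bar k})$. First I note that $u_n\notin V_{\bar k}$ (so $\dim W=\bar k+1$), otherwise the PDE would force $|u_n|^{p-1}u_n = \mu_n^{-1}(-\Delta+\lambda_n)u_n\in V_{\bar k}$, which is impossible for $p>1$ and $u_n\not\equiv 0$.

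Negativity on each summand is straightforward. For nonzero $v=\sum c_j\varphi_j\in V_{\bar k}$, expanding in the Dirichlet basis gives
\[
\int_\Omega|\nabla v|^2 + \lambda_n v^2\,dx = \sum_{j=1}^{\bar k} c_j^2(\lambda_j(\Omega)+\lambda_n) \le (\lambda_{\bar k}(\Omega)+\lambda_n)\|v\|_{L^2}^2 < 0,
\]
and since $-p\mu_n\int|u_n|^{p-1}v^2\le 0$ I obtain $Q_n(v)<0$. Testing the PDE against $u_n$ and using $\|u_n\|_{L^2}=1$ yields the identity $\alpha_n+\lambda_n=\mu_n\int_\Omega|u_n|^{p+1}>0$, whence $Q_n(u_n)=-(p-1)(\alpha_n+\lambda_n)<0$.

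The main obstacle is the cross term: I must prevent $Q_n(v,u_n)$ from spoiling the negative-definiteness of $Q_n$ on $W$. Testing the PDE against $\varphi_j$ produces the clean identity $\mu_n\int|u_n|^{p-1}u_n\varphi_j = (\lambda_j+\lambda_n)(u_n,\varphi_j)_{L^2}$, whence $Q_n(\varphi_j,u_n) = -(p-1)(\lambda_j+\lambda_n)(u_n,\varphi_j)_{L^2}$ and, by linearity, $Q_n(v,u_n)=-(p-1)\mu_n\int|u_n|^{p-1}u_n v$. I then view $s\mapsto Q_n(v+su_n)$ as a concave-down quadratic (coefficient $Q_n(u_n)<0$); its global negativity for $v\ne 0$ reduces to the discriminant condition
\[
Q_n(v,u_n)^2 < Q_n(u_n)\,Q_n(v).
\]
An application of Cauchy--Schwarz in the form $(\int|u_n|^{p-1}u_n v)^2\le\int|u_n|^{p+1}\cdot\int|u_n|^{p-1}v^2$, combined with the Pohozaev-like relation $\mu_n\int|u_n|^{p+1}=\alpha_n+\lambda_n$, bounds the left-hand side above by $(p-1)^2\mu_n(\alpha_n+\lambda_n)\int|u_n|^{p-1}v^2$. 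On the other hand, discarding the (positive) contribution from $-\sum c_j^2(\lambda_j+\lambda_n)$ inside $-Q_n(v)$ bounds the right-hand side below by $p(p-1)\mu_n(\alpha_n+\lambda_n)\int|u_n|^{p-1}v^2$. The strict inequality therefore reduces to the elementary fact $(p-1)^2<p(p-1)$, which holds precisely because $p>1$; together with $\int|u_n|^{p-1}v^2>0$ (by unique continuation applied to $u_n$), this is exactly the cross-term control needed. I expect this reduction to $(p-1)<p$ to be the crux of the argument.
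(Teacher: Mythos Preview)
Your proof is correct and follows the same overall strategy as the paper: show that the quadratic form $Q_n$ is negative definite on $W=\spann\{u_n,\varphi_1,\dots,\varphi_{\bar k}\}$, using the PDE to compute the cross term exactly as $Q_n(u_n,v)=-(p-1)\mu_n\int_\Omega|u_n|^{p-1}u_n v$. The only difference is in how negative-definiteness on $W$ is verified. You handle the cross term via the discriminant condition and Cauchy--Schwarz, reducing the question to $(p-1)^2<p(p-1)$ together with $\int_\Omega|u_n|^{p-1}v^2>0$ (for which you invoke unique continuation). The paper instead observes that the bound $Q_n(\phi)\le -(p-1)\mu_n\int_\Omega|u_n|^{p-1}\phi^2$ for $\phi\in V_{\bar k}$ (strict when $\phi\ne0$), combined with the exact formulas for $Q_n(u_n)$ and $Q_n(u_n,\phi)$, assembles into a perfect square:
\[
Q_n(t_0u_n+\phi)\le -(p-1)\mu_n\int_\Omega|u_n|^{p-1}(t_0u_n+\phi)^2\,dx\le 0,
\]
with strictness for $\phi\ne0$ coming from the discarded term $\sum_j c_j^2(\lambda_j(\Omega)+\lambda_n)<0$. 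This makes your appeal to unique continuation unnecessary; in fact, in your own argument that same discarded term already gives the \emph{strict} lower bound $Q_n(u_n)Q_n(v)>p(p-1)\mu_n(\alpha_n+\lambda_n)\int_\Omega|u_n|^{p-1}v^2$ for $v\ne0$, which suffices regardless of whether the last integral is positive.
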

\begin{proof}
Assume, to the contrary, that for some $n$ it holds $\lambda_n < -\lambda_{\bar k}(\Omega)$. For any real $t_1,\dots t_{\bar k}$
we define
\[
\phi := \sum_{h=1}^{\bar k} t_h \varphi_h.
\]
By denoting $J_{\lambda,\mu}(u)=\Ecal_\mu(u)+\frac{\lambda}{2}\|u\|_{L^2}^2$, so that
Morse index properties can be written in terms of $J''_{\lambda,\mu}$, we have
\[
\begin{split}
J''_{\lambda_n,\mu_n}(u_n)[u_n,\phi] &=  -(p-1)\mu_n\int_\Omega |u_n|^{p-1}u_n\phi,\\
J''_{\lambda_n,\mu_n}(u_n)[\phi,\phi] &= \sum_{h=1}^{\bar k} t_h^2 \int_\Omega \bigl (|\nabla \varphi_h| + \lambda_n \varphi_h^2\bigr )\,dx   - p\mu_n\int_\Omega |u_n|^{p-1}\phi^2\,dx \\
  &\leq \sum_{h=1}^{\bar k} t_h^2(\lambda_{h}(\Omega) + \lambda_n)  - (p-1)\mu_n\int_\Omega |u_n|^{p-1}\phi^2\,dx \leq  - (p-1)\mu_n\int_\Omega |u_n|^{p-1}\phi^2\,dx,
\end{split}
\]
where equality holds if and only if $t_1=\dots=t_{\bar k}=0$. As a consequence
\begin{multline*}
J''_{\lambda_n,\mu_n}(u_n)[t_0 u_n+ \phi, t_0 u_n + \phi] \leq  -t_0^2 (p-1)\mu_n\int_\Omega |u_n|^{p-1}u_n^2 \\
- 2t_0(p-1)\mu_n\int_\Omega |u_n|^{p-1}u_n\phi \,dx  - (p-1)\mu_n\int_\Omega |u_n|^{p-1}\phi^2\,dx.
\end{multline*}
We deduce that $J''_{\lambda_n,\mu_n}(u_n)$ is negative definite on $\spann\{u_n, \varphi_1,\dots,\varphi_{\bar k}\}$, in contradiction with the bound on the Morse index (note that
$u_n$ cannot be a linear combination of a finite number of eigenfunctions, otherwise using the equations we would obtain that such eigenfunctions are linearly dependent).
\end{proof}
\begin{lemma}
\label{localblow}
Let \eqref{eq:mainass_secMorse} hold.
Then $\lambda_n\to +\infty$.
\end{lemma}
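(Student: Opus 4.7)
The plan is to argue by contradiction: if $\lambda_n\not\to+\infty$, combining with Lemma \ref{lem:lambda_bdd_below} yields a subsequence, still denoted by $\lambda_n$, along which $\lambda_n$ is bounded. I would then perform a blow-up analysis of $u_n$ in the spirit of \cite{MR2063399,MR2825606} and reach a contradiction via a Liouville-type theorem for finite Morse-index solutions of $-\Delta v=|v|^{p-1}v$.

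Testing \eqref{eq:auxiliary_n} against $u_n$ gives $\alpha_n+\lambda_n=\mu_n\int_\Omega|u_n|^{p+1}\,dx\le\mu_n\|u_n\|_\infty^{p-1}$, so setting $M_n:=\|u_n\|_\infty$ the assumption $\alpha_n\to+\infty$ together with the boundedness of $\lambda_n$ forces $\mu_n M_n^{p-1}\to+\infty$. Pick $x_n\in\Omega$ with $|u_n(x_n)|=M_n$, set $\rho_n:=(\mu_n M_n^{p-1})^{-1/2}\to 0$, and rescale as
\[
v_n(y) := M_n^{-1} u_n(x_n + \rho_n y),\qquad y\in\Omega_n:=(\Omega-x_n)/\rho_n.
\]
A direct computation shows $\|v_n\|_\infty=|v_n(0)|=1$ and
\[
-\Delta v_n + \lambda_n\rho_n^2\,v_n = |v_n|^{p-1} v_n\quad\text{in }\Omega_n,\qquad v_n|_{\partial\Omega_n}=0,
\]
with $\lambda_n\rho_n^2\to 0$. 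Exploiting the $C^1$-regularity of $\partial\Omega$ to flatten the boundary near $x_n$ whenever $\dist(x_n,\partial\Omega)/\rho_n$ stays bounded, and using standard elliptic estimates together with the uniform $L^\infty$ bound on $v_n$, up to subsequences and rigid motions one has $\Omega_n\to\Omega_\infty$ and $v_n\to v$ in $C^2_\loc(\overline{\Omega_\infty})$, where $\Omega_\infty$ is either $\R^N$ or a half-space. The limit $v$ is a nontrivial bounded solution of $-\Delta v=|v|^{p-1}v$ in $\Omega_\infty$ (with Dirichlet data if $\Omega_\infty$ is a half-space), since $|v(0)|=1$.

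As the Morse index is preserved by the rescaling, $m(v_n)\le\bar k$; a truncation-and-extension argument for eigenfunctions of the rescaled linearized operator then transfers this bound to the limit, giving $m(v)\le\bar k$. Since $1<p<2^*-1$, the Liouville-type theorem of \cite{MR2063399} for finite Morse-index solutions of $-\Delta v=|v|^{p-1}v$ on $\R^N$ (reducing the half-space case to the whole space via odd reflection, which preserves the Morse-index bound) forces $v\equiv 0$, contradicting $|v(0)|=1$. The main obstacle is the simultaneous passage to the limit: extracting a convergent subsequence of the rescaled domains $\Omega_n$ (where the $C^1$-regularity of $\partial\Omega$ is essential to avoid pathological boundary concentration), obtaining $C^2_\loc$ convergence of $v_n$, and transferring the Morse-index bound to the limit equation; once these technical points are in place, Farina's Liouville theorem closes the argument.
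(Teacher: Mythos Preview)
Your proposal is correct and follows essentially the same route as the paper: the same rescaling about a maximum point, the same passage to a limit equation on $\R^N$ or a half-space with finite Morse index, and the same Liouville obstruction via Farina's classification \cite{MR2322150} (rather than \cite{MR2063399}) of finite-Morse-index solutions of $-\Delta v=|v|^{p-1}v$. The only difference is presentational: you assume $\lambda_n$ bounded and reach a contradiction, whereas the paper rescales without this assumption and shows directly that $\lambda_n\tilde\eps_n^{\,2}\to\tilde\lambda\in(0,1]$, thereby obtaining the extra asymptotic information $\lambda_n\sim\tilde\lambda\,\|U_n\|_\infty^{p-1}$ that is used in the subsequent blow-up analysis.
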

\begin{proof}
By Lemma \ref{lem:lambda_bdd_below} we have that $\lambda_n$ is bounded below. As a consequence, we can use H\"{o}lder inequality with $\|u_n\|_{L^2}=1$ and \eqref{eq:auxiliary_n} to write
\[
\mu_n\,\|u_n\|^{p-1}_{L^{\infty}}\ge \mu_n \,\|u_n\|^{p+1}_{L^{p+1}}=\alpha_n+\lambda_n
\rightarrow +\infty.
\]
Let us define
\begin{equation}
\label{equn}
U_n: =\mu_n^{\frac{1}{p-1}}\,u_n,
\quad\text{ so that }
-\Delta U_n+\lambda_n U_n=|U_n|^{p-1}U_n\quad \text{in}\,\,\Omega,\quad
U|_{\partial\Omega}=0.
\end{equation}
Pick $P_n\in\Om$ such that $|U_n(P_n)|=\|U_n\|_{L^{\infty}(\Omega)}$ and set
\begin{equation}
\label{tildepsn}
\tilde\eps_n: =|U_n(P_n)|^{-\frac{p-1}{2}}=\frac{1}{\sqrt{\mu_n\,\|u_n\|^{p-1}_{L^{\infty}}}}\longrightarrow 0
\end{equation}
Hence, $|U_n(P_n)|\to+\infty$; moreover, as $P_n$ is a point of positive maximum or of negative minimum, we have
\begin{equation}
\nonumber
0\le \frac{-\Delta U_n(P_n)}{U_n(P_n)}=|U_n(P_n)|^{p-1}-\lambda_n\,.
\end{equation}
Thus $\lambda_n|U_n(P_n)|^{1-p}\le 1$, and since $\lambda_n$ is bounded from below, we conclude
\begin{equation}
\label{limtildelam}
\frac{\lambda_n}{|U_n(P_n)|^{p-1}}\longrightarrow \tilde\lambda\in [0,1].
\end{equation}
Now, we are left to prove that $\tilde\lambda>0$. Let us define
\begin{equation}
\label{tildeVn}
\tilde V_n(y)=\tilde\eps_n^{\frac{2}{p-1}}\, U_n(\tilde\eps_n\,y+P_n),\quad\quad y\in \tilde\Om_n
:=\big (\Om-P_n\big )/\tilde\eps_n,
\end{equation}
and let $d_n := d(P_n,\partial\Om)$; we have, up to subsequences,
\[
\frac{\tilde\eps_n}{d_n}\longrightarrow L\in [0,+\infty]
\qquad\text{and}\qquad
\tilde\Om_n\rightarrow\left\{
                \begin{array}{ll}
                  \R^n, & \text{if $L=0$;} \\
                  H, & \text{if $L>0$,}
                \end{array}
              \right.
\]
where $H$ is a half-space such that $0\in \overline H$ and $d(0,\partial H)=1/L$.
The function $\tilde V_n$ satisfies
\begin{equation}
\nonumber
\left\{
  \begin{array}{ll}
    -\Delta \tilde V_n+\lambda_n\,\tilde \eps_n^2\,\tilde V_n=|\tilde V_n|^{p-1}\tilde V_n, & \hbox{in}\,\, \tilde\Om_n;\\
    |\tilde V_n|\le |\tilde V_n(0)|=1, & \hbox{in}\,\, \tilde\Om_n;\\
    \tilde V_n=0, & \hbox{on}\,\, \partial\tilde \Om_n.
  \end{array}
\right.
\end{equation}
From \eqref{tildepsn} and \eqref{limtildelam} we get $\tilde\eps_n^2\,\lambda_n\rightarrow \tilde\lambda$; hence, by elliptic regularity and up to a further subsequence, $\tilde V_n\rightarrow \tilde V$ in $\mathcal{C}^1_{\loc}(\overline H)$ where $\tilde V$ solves
\begin{equation}
\label{limprob1}
\left\{
  \begin{array}{ll}
    -\Delta \tilde V+\tilde\lambda\,\tilde V=|\tilde V|^{p-1}\tilde V, & \hbox{in}\,\, H;\\
    |\tilde V|\le |\tilde V(0)|=1, & \hbox{in}\,\, H;\\
    \tilde V=0, & \hbox{on}\,\, \partial H.
  \end{array}
\right.
\end{equation}
Since $\sup_n m(U_n)\leq \bar k$ (as a solution to \eqref{equn}), one can show as in Theorem $3.1$ of \cite{MR2825606} that $m(\tilde V)\leq \bar k$. In particular, $\tilde V$ is stable outside a compact set (see Definition $2.1$ in \cite{MR2825606}) so that, by Theorem $2.3$ and Remark $2.4$  of \cite{MR2825606}, we have
$$\tilde V(x)\rightarrow 0 \quad\quad \text{as} \quad\quad |x|\rightarrow +\infty.$$
Moreover, since $\tilde V$ is not trivial, we also have that $\tilde\lambda>0$. For, if $\tilde\lambda=0$ the function
$\tilde V$ would be a  solution of the Lane-Emden equation
$-\Delta u=|u|^{p-1}u$ either in $\R^n$ or in $H$. In both cases, $\tilde V$ would contradict Theorems $2$ and $9$ of \cite{MR2322150}, being non trivial and stable outside a compact set. Thus, $\tilde\lambda >0$ and by \eqref{limtildelam} we conclude $\lambda_n\rightarrow +\infty$.
\end{proof}
\begin{remark}\label{rem4}
We stress that the scaling argument in Lemma \ref{localblow}, leading to the limit problem \eqref{limprob1} (with $\tilde\lambda>0$), can be repeated also near points of \emph{local} extremum.
More precisely, let $Q_n$ be such that $|U_n(Q_n)|\to +\infty$ and
$$
|U_n(Q_n)|=\max_{\Om\cap B_{R_n\tilde\eps_n}(Q_n)}U_n,
$$
for some $R_n\to +\infty$. Then the above procedure can be repeated by replacing $P_n$ with $Q_n$ in definition \eqref{tildepsn}.
\end{remark}

The local description of the asymptotic behaviour of the solutions $U_n$ to \eqref{equn} with bounded Morse index can be carried out more conveniently by defining
the sequence (see \cite[Theorem $3.1$]{MR2825606})
\begin{equation}
\label{defVn}
V_n(y)=\eps_n^{\frac{2}{p-1}}\, U_n(\eps_n\,y+P_n),\quad y\in \Om_n
:=\frac{\Om-P_n}{\eps_n},
\end{equation}
where $P_n$ is defined before \eqref{tildepsn}, and $\eps_n=\frac{1}{\sqrt{\lambda_n}}\to 0$.
Then, $V_n$ satisfies
\begin{equation}
\nonumber
\left\{
  \begin{array}{ll}
    -\Delta V_n+ V_n=| V_n|^{p-1} V_n, & \hbox{in}\,\,\Om_n;\\
    |V_n|\le | V_n(0)|=\big ({\eps_n/\tilde\eps_n}\big )^{\frac{2}{p-1}}\rightarrow
\tilde\lambda^{-\frac{1}{p-1}}, & \hbox{in}\,\, \Om_n;\\
    V_n=0, & \hbox{on}\,\, \partial\Om_n.
  \end{array}
\right.
\end{equation}
As before, we have (up to a subsequence) $V_n\rightarrow  V$ in $\mathcal{C}^1_{\mathrm{loc}}(\overline H)$ where $H$ is either $\R^N$ or a half space and $V$ solves
\begin{equation}
\label{limprob2}
\left\{
  \begin{array}{ll}
    -\Delta  V+ V=| V|^{p-1} V, & \hbox{in}\,\, H;\\
    |V|\le | V(0)|=\tilde\lambda^{-\frac{1}{p-1}}, & \hbox{in}\,\, H;\\
     V=0, & \hbox{on}\,\, \partial H.
  \end{array}
\right.
\end{equation}
By recalling the discussion following \eqref{limprob1} we also have $m(V)<+\infty$.
We collect some well known property of such a $V$ in the following result.
\begin{theorem}[\cite{MR688279,MR2825606,MR2322150,MR2785899}]\label{thm:unif_est_Farina}
Let $V$ be a classical solution to \eqref{limprob2} such that $m(V)\leq\bar k$. Then:
\begin{enumerate}
 \item $H=\R^N$;
 \item $V(x)\to 0$ as $|x|\rightarrow +\infty$, $V \in H^1(\R^N)\cap L^{p+1}(\R^N)$;
 \item there exist $C$ only depending on $\bar k$ (and not on $V$) such that
 \[
 \|V\|_{L^{\infty}} + \|\nabla V\|_{L^{\infty}}<C.
 \]
\end{enumerate}
\end{theorem}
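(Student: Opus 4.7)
The strategy is to establish the three claims in order, using the stability-outside-a-compact-set property (already noted in the discussion preceding the theorem, which via \cite[Theorem 2.3]{MR2825606} implies $V(x)\to 0$ as $|x|\to\infty$) together with Liouville-type arguments on half-spaces and finite-Morse-index classifications for the Lane--Emden equation on $\R^N$.

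For (1), I would rule out a proper half-space via a Pohozaev-type identity. Assume by contradiction $H=\{x_N>0\}$. The decay $V\to 0$ combined with a standard exponential-barrier argument (comparing $|V|$ with $Ce^{-|x|/\sqrt{2}}$ in the region where $|V|^{p-1}<1/2$, where the linearized operator $-\Delta+(1-|V|^{p-1})$ is coercive) yields $V\in H^1(H)\cap L^{p+1}(H)$ with exponentially decaying gradient; this simultaneously delivers the integrability claimed in (2) once $H=\R^N$ is known. Testing the equation against $\partial_N V$ and integrating on $H$, the mass and nonlinear terms $\int_H V\,\partial_N V$ and $\int_H|V|^{p-1}V\,\partial_N V$ are complete $x_N$-derivatives whose boundary traces vanish (since $V|_{\partial H}=0$), while the Laplacian term, after a double integration by parts and using $|\nabla V|^2=(\partial_N V)^2$ on $\partial H$, reduces to $\frac12\int_{\partial H}(\partial_N V)^2\,d\sigma$. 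Hence $\partial_N V\equiv 0$ on $\partial H$; together with the Dirichlet condition, boundary unique continuation (Aronszajn) forces $V\equiv 0$, contradicting $|V(0)|=\tilde\lambda^{-1/(p-1)}>0$. Thus (1) holds, and (2) is completed by testing the equation against $V$ on $B_R\subset\R^N$ and letting $R\to\infty$, using the decay of $V$ and of $\nabla V$ to kill the boundary fluxes.

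For (3), I would argue by contradiction. Suppose there is a sequence $V_n$ of classical solutions to \eqref{limprob2} with $m(V_n)\le\bar k$ and $L_n:=\|V_n\|_{L^\infty}\to+\infty$. Rescale $W_n(y):=L_n^{-1}V_n(L_n^{-(p-1)/2}y)$: then $|W_n(0)|=\|W_n\|_{L^\infty}=1$ and $W_n$ solves $-\Delta W_n+L_n^{-(p-1)}W_n=|W_n|^{p-1}W_n$ on $\R^N$. By interior elliptic regularity, up to a subsequence $W_n\to W$ in $\mathcal{C}^1_{\loc}(\R^N)$, with $|W(0)|=1$ and $-\Delta W=|W|^{p-1}W$ on $\R^N$. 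Since the Morse index is upper semicontinuous under $\mathcal{C}^1_{\loc}$-convergence (eigenvalues of the linearization restricted to any fixed compact set depend continuously on the potential), $m(W)\le\bar k$. As $1<p<2^*-1$ lies strictly below the Joseph--Lundgren exponent, Farina's classification of finite-Morse-index solutions of the Lane--Emden equation \cite{MR2322150,MR2785899} (see also \cite[Theorem 3.1]{MR2825606}) forces $W\equiv 0$, contradicting $|W(0)|=1$. The gradient bound then follows from interior Schauder estimates applied to $-\Delta V=|V|^{p-1}V-V$ with now uniformly bounded right-hand side.

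The step I expect to be hardest is the Morse-index semicontinuity in the blow-up of (3), especially for possibly sign-changing $V_n$: while preservation of stability-outside-a-compact-set under $\mathcal{C}^1_{\loc}$-convergence is classical, making the quantitative bound $m(W)\le\bar k$ rigorous requires exhibiting test directions on a fixed compact set and exploiting eigenvalue continuity for the linearized operator. A secondary subtlety, in (1), is the rigorous truncation justifying the Pohozaev identity, but the exponential decay supplied by the barrier argument makes all fluxes at infinity tend to zero and renders the integration by parts well-defined.
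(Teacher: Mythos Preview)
Your proposal is correct and follows the same route as the paper, which simply cites the references whose proofs you have essentially reconstructed: the Esteban--Lions Pohozaev identity \cite{MR688279} for (1), the decay and integrability from \cite{MR2825606} for (2), and the blow-up to a finite-Morse-index Lane--Emden solution combined with Farina's Liouville theorem \cite{MR2322150,MR2785899} for (3). The Morse-index upper-semicontinuity you flag in (3) is indeed the only delicate step, and it follows routinely by pulling back compactly supported negative directions from the limit $W$ to $W_n$ via the $\mathcal{C}^1_{\loc}$ convergence of the potentials.
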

\begin{proof}
Claim 2 follows from Theorem 2.3 and Remark 2.4 of \cite{MR2825606}, see also \cite[Remark 1.4]{MR688279}. As a consequence,
Theorem 1.1 of \cite[Remark 1.4]{MR688279} readily applies, providing claim 1 ($V$ is not trivial as $V(0)>0$). On the other hand, the $L^\infty$ estimates in claim 3.
are proved in Theorem 1.9 of \cite{MR2785899}.
\end{proof}
% and
%$V(x)\rightarrow 0$ as $|x|\rightarrow +\infty$. Actually, by techniques used by Farina et al. and  Esposito et al...,
%one can get strong integrability properties of $V$; in particular, we have
%\begin{equation}
%\label{stimintV}
%V\in L^q(H)\quad \mathrm{for}\,\, \mathrm{any}\quad q\in (0, 2p+2\sqrt{p(p-1)})
%\end{equation}
%Let us now recall the following \emph{non-existence result in a half-space} $H$ (see [Esteban-Lions] theorem $1.1$) :
%\begin{theorem}
%Let $f$ be locally Lipschitz continuous on $\R$ and such that $f(0)=0$. If $u$ is a solution of
%$$-\Delta u=f(u)\quad\quad \mathrm{in}\,\, H,\quad\quad u\in \mathcal{C}^2(\overline H),\quad\quad u=0\quad \mathrm{on}\quad\partial H$$
%satisfying $\nabla u\in L^2(H)$, $F(u)\in L^1(H)$ (with $F(t)=\int_0^t f(s)\,ds$), then we have necessarily $u\equiv 0$ in $H$.
%\end{theorem}
%We can apply the above theorem to our solution $V$ since by \eqref{limprob2} and by \eqref{stimintV} we have
%\footnote{By remark $1.4$ of [Esteban-Lions], the required integrability properties of a solution $V$ to \eqref{limprob2} actually follow by assuming $V(x)\rightarrow 0$}
%$$V\in L^2(H),\,\,\,V\in L^{p+1}(H)\quad \mathrm{and} \quad\int_H |\nabla V|^2=
%\int_H |V|^{p+1}-\int_H |V|^{2}<\infty$$
%On the other hand $V$ is not trivial as $V(0)>0$, so that we conclude that  $H\equiv \R^n$
%in the limit problem \eqref{limprob2}. Hence, by recalling the definition \eqref{defVn} and the subsequent discussion, we have
\begin{corollary}
\label{distpnbound}
If the sequence $\{U_n\}$ of  solutions to \eqref{equn} has uniformly bounded Morse index, and if $P_n\in\Om$ is such that $|U_n(P_n)|=\|U_n\|_{L^{\infty}(\Omega)}\to+\infty$, then
%$\frac{\eps_n}{d(P_n,\,\partial\Om)}\rightarrow 0 $, that is
$$
\sqrt{\lambda_n}\,d(P_n,\partial\Om)\rightarrow +\infty,\qquad\text{where }\frac{\lambda_n}{|U_n(P_n)|^{p-1}}\to\tilde\lambda\in(0,1].
$$
\end{corollary}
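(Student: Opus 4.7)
The plan is to show that the only remaining alternative left open in the blow-up analysis of Lemma \ref{localblow} (namely, that the rescaled domain converges to a half-space) is ruled out by the Morse index bound, via claim 1 of Theorem \ref{thm:unif_est_Farina}. Once this is established, the conversion from the natural scale $\tilde\eps_n = |U_n(P_n)|^{-(p-1)/2}$ to the scale $\eps_n = 1/\sqrt{\lambda_n}$ is immediate, because these two scales are comparable as a consequence of $\tilde\lambda>0$.

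More precisely, set $d_n := d(P_n,\partial\Om)$ and recall from Lemma \ref{localblow} that, up to a subsequence, $\tilde\eps_n/d_n\to L\in[0,+\infty]$ and the rescaled functions $\tilde V_n$ defined in \eqref{tildeVn} converge in $C^1_{\loc}$ to a nontrivial solution $\tilde V$ of \eqref{limprob1} on $H$, which is either $\R^N$ (case $L=0$) or a half-space with $d(0,\partial H)=1/L$ (case $L>0$). Since the Morse index is scale-invariant, the argument from \cite{MR2825606} recalled in the text gives $m(\tilde V)\le\bar k$. After the further rescaling $V(x):=\tilde\lambda^{-1/(p-1)}\tilde V(x/\sqrt{\tilde\lambda})$, which preserves both the type of the underlying domain and the Morse index bound, $V$ becomes a nontrivial solution of \eqref{limprob2} on the same $H$ with $m(V)\le\bar k$. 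By claim 1 of Theorem \ref{thm:unif_est_Farina}, this forces $H=\R^N$, and hence $L=0$, i.e.\ $d_n/\tilde\eps_n\to+\infty$.

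To translate this into the desired statement, I combine this with the definitions: by \eqref{tildepsn} and \eqref{limtildelam},
\[
\sqrt{\lambda_n}\,\tilde\eps_n=\sqrt{\frac{\lambda_n}{|U_n(P_n)|^{p-1}}}\longrightarrow\sqrt{\tilde\lambda}\in(0,1],
\]
so that
\[
\sqrt{\lambda_n}\,d_n=\bigl(\sqrt{\lambda_n}\,\tilde\eps_n\bigr)\cdot\frac{d_n}{\tilde\eps_n}\longrightarrow+\infty,
\]
which is the claim. The only real input needed, beyond what has already been proved, is the strict positivity of $\tilde\lambda$ established in Lemma \ref{localblow}, which both guarantees that the rescaling leading from \eqref{limprob1} to \eqref{limprob2} is well-defined and guarantees the comparability of the two scales $\tilde\eps_n$ and $\eps_n$. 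The potentially delicate point would have been the existence of a nontrivial finite Morse index solution on a half-space, but this is precisely what is excluded by Theorem \ref{thm:unif_est_Farina}, so no further work is required.
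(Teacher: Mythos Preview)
Your proof is correct and follows essentially the same idea as the paper: the blow-up limit has finite Morse index, so by claim~1 of Theorem~\ref{thm:unif_est_Farina} the limiting domain must be $\R^N$ rather than a half-space, which forces the distance to the boundary to diverge in the appropriate scale. The only minor difference is that the paper, in the passage between Lemma~\ref{localblow} and Theorem~\ref{thm:unif_est_Farina}, has already set up the $\eps_n=1/\sqrt{\lambda_n}$ rescaling and its limit $V$ solving \eqref{limprob2} directly on $H$, so the corollary follows in one step (since $H=\R^N$ is exactly $\eps_n/d_n\to0$), whereas you reach the same conclusion via the $\tilde\eps_n$-scaled limit $\tilde V$ followed by an additional rescaling---an equivalent but slightly longer route.
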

\begin{remark}
Recall that $Z_{N,p}$, the unique positive solution to $-\Delta  u+ u=| u|^{p-1} u$ in $\R^N$, has Morse index $1$ \cite{MR969899}; then, if $V$ solves \eqref{limprob2} in $\R^N$
and $1<m(V)<+\infty$, then $V$ is necessarily sign-changing.
\end{remark}
Following the same pattern as in \cite{MR2825606}, we now analyze the global behaviour of a sequence $\{U_n\}$ of  solutions to \eqref{equn} for $\lambda_n\to +\infty$, assuming that
\(
\lim_{n\to +\infty} m(U_n)\leq\bar k<\infty.
\)

By the previous discussion, if $P^1_n$ is a sequence of points such that $|U_n(P^1_n)|=\|U_n\|_{L^{\infty}(\Omega)}$, we have
$|U_n(P^1_n)|\rightarrow +\infty$ and ${\lambda_n}\,d(P^1_n,\partial\Om)^2\rightarrow +\infty$. We now look for other possible sequences of (local) extremum points $P^i_n$, $i=2,3,..$, along which $|U_n|$ goes to infinity. For any $R>0$, consider the quantity
\begin{equation}
\nonumber
h_1(R)=\limsup_{n\to +\infty} \Bigl (\lambda_n^{-\frac{1}{p-1}}\max_{|x-P^1_n|\ge R\,\lambda_n^{-1/2}}
|U_n(x)| \Bigr ).
\end{equation}
We will prove that if $h_1(R)$ is \emph{not vanishing} for large $R$, then there exists a 'blow-up' sequence $P^2_n$ for $u_n$, 'disjoint' from $P^1_n$. Indeed, let us suppose that
\begin{equation}
\nonumber
\limsup_{R\to +\infty} h_1(R)=4\delta>0.
\end{equation}
Hence, up to a subsequence and for arbitrarily large $R$, we have
\begin{equation}
\label{ass1}
\lambda_n^{-\frac{1}{p-1}}\max_{|x-P^1_n|\ge R\,\lambda_n^{-1/2}}
|U_n(x)| \ge 2\delta.
\end{equation}
Since $U_n$ vanishes on $\partial\Om$, there exists
$P^2_n\in\Om\backslash B_{R\,\lambda_n^{-1/2}}(P_n^1)$
such that
\begin{equation}
\label{pn2}
|U_n(P_n^2)|=\max_{|x-P^1_n|\ge R\,\lambda_n^{-1/2}}|U_n(x)|.
\end{equation}
Clearly, assumption \eqref{ass1} implies that $|U_n(P_n^2)|\rightarrow +\infty$. We first prove that the sequences $P_n^1$ and $P_n^2$ are far away each other.
\begin{lemma}
\label{disj}
Take $R$ such that \eqref{ass1} holds, and let $P_n^2$ be defined as in \eqref{pn2}; then
\begin{equation}
\label{limp1p2}
\lambda_n^{1/2}|P_n^2-P^1_n|\rightarrow +\infty
\end{equation}
as $n\to \infty$.
\end{lemma}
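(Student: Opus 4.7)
The plan is a proof by contradiction via a blow-up around the dominant spike $P_n^1$. Assume that along a subsequence $\sqrt{\lambda_n}|P_n^2 - P_n^1| \le M < \infty$. I would rescale around $P_n^1$ as in \eqref{defVn} with $\eps_n = 1/\sqrt{\lambda_n}$, and reach a contradiction with the decay at infinity of the limiting profile.

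First, combining Corollary \ref{distpnbound} (which ensures that the rescaled domains exhaust $\R^N$) with the uniform $C^1$ estimates of Theorem \ref{thm:unif_est_Farina} (valid because the Morse index bound $\bar k$ is inherited by the limit, as already noted after \eqref{limprob2}), I extract a further subsequence along which $V_n \to V$ in $C^1_{\mathrm{loc}}(\R^N)$. The limit $V$ is a nontrivial classical solution of \eqref{limprob2} in $\R^N$, with $V(0) = \tilde\lambda^{-1/(p-1)}$, $V \in H^1(\R^N) \cap L^{p+1}(\R^N)$, and $V(y) \to 0$ as $|y| \to \infty$. Setting $y_n := \sqrt{\lambda_n}(P_n^2 - P_n^1)$, the contradiction hypothesis forces $R \le |y_n| \le M$, and (up to further extraction) $y_n \to y^*$ with $R \le |y^*| \le M$. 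Since $V_n(y_n) = \lambda_n^{-1/(p-1)} U_n(P_n^2)$, assumption \eqref{ass1} gives $|V_n(y_n)| \ge 2\delta$, so $C^1_{\mathrm{loc}}$-convergence then yields $|V(y^*)| \ge 2\delta$.

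The contradiction then follows from the decay of $V$: since $V(y) \to 0$ at infinity, there exists $R_0 = R_0(V)$ such that $\sup_{|y|\ge R_0}|V(y)| < \delta$, so $|y^*| \ge R \ge R_0$ would yield $|V(y^*)| < \delta$, contradicting $|V(y^*)| \ge 2\delta$. The main point I expect to require care is precisely the bookkeeping between the \emph{a-priori} role of $R$ (which enters the definition of $P_n^2$) and the \emph{a-posteriori} decay scale $R_0(V)$: the map $R \mapsto h_1(R)$ is non-increasing and $\limsup_{R\to\infty}h_1(R) = 4\delta > 0$, so \eqref{ass1} holds for every $R>0$, and hence $R$ can be enlarged after $V$ has been identified, so as to ensure $R \ge R_0(V)$. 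Once this subtle interplay is arranged, the rest of the argument is a direct application of the blow-up toolbox assembled in the section, together with the uniform decay of Morse-bounded solutions of \eqref{limprob2} from Theorem \ref{thm:unif_est_Farina}.
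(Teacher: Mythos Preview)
Your proposal is correct and follows essentially the same approach as the paper: a contradiction argument via the blow-up $V_n(y)=\lambda_n^{-1/(p-1)}U_n(\lambda_n^{-1/2}y+P_n^1)\to V$ around $P_n^1$, combined with the decay of $V$ at infinity to contradict \eqref{ass1}. You even make explicit the bookkeeping point (that $V$ is determined independently of $R$, so $R$ may be enlarged to exceed the decay scale $R_0(V)$) which the paper leaves somewhat implicit in the sentence ``one can choose $R$ such that $|V(y)|\le\delta$ for every $|y|\ge R$''.
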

\begin{proof}
Assuming the contrary one would get, up to a subsequence
\[
\lambda_n^{1/2}|P_n^2-P^1_n|\rightarrow R'\ge R.
\]
Let us now recall
that by \eqref{defVn} and the subsequent discussion, we have:
\begin{equation}
\label{limblowseq}
\lambda_n^{-\frac{1}{p-1}}\, U_n(\lambda_n^{-1/2}\,y+P^1_n) =: V^1_n(y)\rightarrow V(y)\quad \textrm{in}
\,\, \mathcal{C}^1_{\loc}(\R^N)
\end{equation}
as $n\to +\infty$. Then, up to subsequences,
\begin{equation}
\nonumber
\lambda_n^{-\frac{1}{p-1}}\,|U_n(P_n^2)|=\big |V^1_n\bigr(\lambda_n^{1/2}(P_n^2-P^1_n)\bigl)\big|
\rightarrow \big |V(y')\big |,\quad |y'|=R'\ge R.
\end{equation}
Since $V$ is vanishing for $|y|\to +\infty$, one can choose $R$ such that
$|V(y)|\le\delta$ for every $ |y|\ge R$.
But this contradicts \eqref{ass1}.
\end{proof}
Furthermore, we also have that the blow-up points stay far away from the boundary.
\begin{lemma}
\label{distbd}
Assume \eqref{ass1} and let $P_n^2$ be defined as in \eqref{pn2}; then
\begin{equation}
\label{distp2nbound}
\sqrt{\lambda_n}\,d(P^2_n,\partial\Om)\rightarrow +\infty
\end{equation}
as $n\to \infty$. Moreover,
\begin{equation}
\label{maxp2nball}
|U_n(P_n^2)|=\max_{\Om\cap B_{R_n\lambda^{-1/2}_n}(P^2_n)}|U_n|
\end{equation}
for some $R_n\to +\infty$.
\end{lemma}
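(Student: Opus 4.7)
The plan is to mirror the blow-up analysis carried out around the global maximum $P_n^1$ (culminating in Corollary \ref{distpnbound}), now re-centering at the local extremum $P_n^2$, and to exploit Lemma \ref{disj} in order to guarantee that the rescaled picture around $P_n^2$ is not contaminated by what happens near $P_n^1$.

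To prove \eqref{distp2nbound}, I would argue by contradiction and assume that, along a subsequence, $\sqrt{\lambda_n}\,d(P_n^2,\partial\Om)\leq M<\infty$. With $\eps_n=1/\sqrt{\lambda_n}$, introduce
\[
V_n^{(2)}(y):=\eps_n^{\frac{2}{p-1}}\,U_n(\eps_n y+P_n^2),\qquad y\in\Omega_n^{(2)}:=(\Omega-P_n^2)/\eps_n,
\]
which solves $-\Delta V_n^{(2)}+V_n^{(2)}=|V_n^{(2)}|^{p-1}V_n^{(2)}$ with zero boundary data on $\Omega_n^{(2)}$, and up to subsequences $\Omega_n^{(2)}\to H$, a half-space with $0\in\overline H$ and $d(0,\partial H)\leq M$. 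Three facts have to be checked: (i) $|V_n^{(2)}(0)|=\lambda_n^{-1/(p-1)}|U_n(P_n^2)|\geq 2\delta$ by \eqref{ass1}; (ii) $|V_n^{(2)}(0)|\leq \lambda_n^{-1/(p-1)}|U_n(P_n^1)|$ stays bounded, since Lemma \ref{localblow} yields $\lambda_n/|U_n(P_n^1)|^{p-1}\to\tilde\lambda\in(0,1]$; (iii) for every $y$ with $|y-\sqrt{\lambda_n}(P_n^1-P_n^2)|\geq R$, the maximality property \eqref{pn2} gives $|V_n^{(2)}(y)|\leq|V_n^{(2)}(0)|$, and Lemma \ref{disj} guarantees that any prescribed compact subset of $\R^N$ eventually lies inside this region. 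Thus $\{V_n^{(2)}\}$ is locally uniformly bounded, and standard elliptic regularity together with the preservation of the Morse index under rescaling (as in \cite[Theorem~3.1]{MR2825606}) produce a further subsequence converging in $\mathcal{C}^1_{\loc}(\overline H)$ to a nontrivial classical solution $V^{(2)}$ of the limit problem in $H$, with zero Dirichlet datum and $m(V^{(2)})\leq\bar k$. Theorem \ref{thm:unif_est_Farina} then forces $H=\R^N$, contradicting $d(0,\partial H)\leq M$.

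The second claim \eqref{maxp2nball} is immediate from the same separation estimate. By the triangle inequality,
\[
B_{R_n\eps_n}(P_n^2)\subset\{x:|x-P_n^1|\geq R\eps_n\}\qquad\text{whenever}\qquad R_n+R\leq\sqrt{\lambda_n}\,|P_n^2-P_n^1|,
\]
and since Lemma \ref{disj} makes the right-hand side diverge, one may pick any $R_n\to+\infty$ satisfying the inequality above (for instance $R_n=\tfrac12\sqrt{\lambda_n}\,|P_n^2-P_n^1|$); the defining property \eqref{pn2} of $P_n^2$ then gives $|U_n(P_n^2)|\geq|U_n(x)|$ on $B_{R_n\eps_n}(P_n^2)\cap\Om$.

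The main delicate point is the blow-up in the first part: since $P_n^2$ is only a \emph{local} extremum of $|U_n|$, one has to extract a nontrivial global limit profile on $H$ while running the risk of losing compactness near the image of $P_n^1$. The quantitative separation encoded in Lemma \ref{disj} is precisely what saves the argument, by pushing the competing maximum point out to infinity in the new coordinates so that the local maximality of $P_n^2$ propagates to every fixed compact set.
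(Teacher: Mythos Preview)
Your proof is correct, but it takes a somewhat different route from the paper. The paper first establishes the local maximum property \eqref{maxp2nball} (in the $\tilde\eps_n^2$ scale), then invokes Remark~\ref{rem4} to rerun the full two-step blow-up of Lemma~\ref{localblow} at $P_n^2$: a first rescaling by $\tilde\eps_n^2=|U_n(P_n^2)|^{-(p-1)/2}$ yields $\lim \tilde\eps_n^2\sqrt{\lambda_n}>0$, and only then the $\eps_n=\lambda_n^{-1/2}$ rescaling together with Corollary~\ref{distpnbound} gives \eqref{distp2nbound}. You instead go directly to the $\eps_n$-rescaling and argue by contradiction with Theorem~\ref{thm:unif_est_Farina}; the nontriviality of the limit comes from $|V_n^{(2)}(0)|\geq 2\delta$ via \eqref{ass1}, and the uniform bound comes from the global maximum at $P_n^1$ through $\lambda_n/|U_n(P_n^1)|^{p-1}\to\tilde\lambda>0$. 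This bypasses the intermediate $\tilde\eps_n^2$ step entirely, which is more economical here, while the paper's route has the advantage of reusing verbatim the machinery already built (Remark~\ref{rem4} and Corollary~\ref{distpnbound}). For \eqref{maxp2nball} both arguments coincide, since your choice $R_n=\tfrac12\sqrt{\lambda_n}\,|P_n^2-P_n^1|$ is exactly the paper's $R_n^{(2)}\tilde\eps_n^2\sqrt{\lambda_n}$.
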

\begin{proof}
Let us set
\begin{equation}
\nonumber
\tilde\eps^2_n: =|U_n(P^2_n)|^{-\frac{p-1}{2}}\quad \mathrm{and}
\quad R_n^{(2)}:=\frac{1}{2}\,\frac{|P_n^2-P_n^1|}{\tilde\eps^2_n}.
\end{equation}
Clearly, $\tilde\eps^2_n\rightarrow 0$; moreover, by \eqref{ass1} and \eqref{pn2},
$\tilde\eps^2_n\le (2\delta)^{-\frac{p-1}{2}}\lambda_n^{-1/2}$, so that
$$R^{(2)}_n\ge  \frac{(2\delta)^{\frac{p-1}{2}}}{2}\,\lambda_n^{1/2}\,{|P_n^2-P_n^1|}
\rightarrow +\infty, $$
as $n\to +\infty$ by Lemma \ref{disj}. We claim that this implies
\begin{equation}
\label{maxp2n}
|U_n(P_n^2)|=\max_{\Om\cap B_{R^{(2)}_n\tilde\eps^2_n}(P^2_n)}| U_n|.
\end{equation}
For, if $x\in B_{R^{(2)}_n\tilde\eps^2_n}(P^2_n)$, by \eqref{limp1p2} we would have
$$|x-P^1_n|\ge |P^2_n-P^1_n|-|x-P^2_n|\ge \frac{1}{2}\,|P^2_n-P^1_n|\ge R\,\lambda_n^{-1/2},$$
for arbitrarily large $R$. This means that
$$\Om\cap B_{R^{(2)}_n\tilde\eps^2_n}(P^2_n)\subset \Om\backslash B_{R\,\lambda_n^{-1/2}}(P^1_n).$$
Then, the claim follows. Now, by recalling Remark \ref{rem4}, we can apply to $U_n$ satisfying \eqref{maxp2n} the same scaling arguments as in the proof of Lemma \ref{localblow}, so that we conclude
$$
0< \lim_{n\to +\infty}\tilde\eps_n^2\,\sqrt{\lambda_n}.
$$
Hence,  \eqref{maxp2nball} holds by defining $R_n=R^{(2)}_n\tilde\eps_n^2\,\sqrt{\lambda_n}$,
and  \eqref{distp2nbound} follows by Corollary  \ref{distpnbound}.
\end{proof}
We can now iterate the previous arguments: let us define, for  $k\ge 1$,
\begin{equation}
\label{defhn}
h_k(R)=\limsup_{n\to +\infty} \Bigl (\lambda_n^{-\frac{1}{p-1}}\max_{d_{n,k}(x)\ge R\,\lambda_n^{-1/2}}
|U_n(x)| \Bigr ),
\end{equation}
where
$$d_{n,k}(x): =\min\{|x-P^i_n|\,:\, i=1,...,k\}$$
and the sequences $P^i_n$ are such that
\begin{equation}
\nonumber
\sqrt{\lambda_n}\,d(P^i_n,\partial\Om)\rightarrow +\infty;\quad \lambda_n^{1/2}|P_n^i-P^j_n|\rightarrow +\infty,\quad\quad i,j=1,...,k,\quad i\neq j
\end{equation}
as $n\to +\infty$.
Assume that
$$\limsup_{n\to +\infty} h_k(R)=4\delta>0.$$
As before, up to a subsequence and for arbitrarily large $R$, we have
\begin{equation}
\label{assk}
\lambda_n^{-\frac{1}{p-1}}\max_{d_{n,k}(x)\ge R\,\lambda_n^{-1/2}}
|U_n(x)| \ge 2\delta
\end{equation}
and there exist $P^{k+1}_n$ so that
\begin{equation}
\nonumber
|U_n(P_n^{k+1})|=\max_{d_{n,k}(x)\ge R\,\lambda_n^{-1/2}}|U_n(x)|
\end{equation}
with $\lim_{n\to +\infty}|U_n(P_n^{k+1})|=+\infty$.
Moreover, as in Lemma \ref{disj} we deduce that, for every $i=1,...,k$
\begin{equation}
\label{limblowseqi}
\lambda_n^{-\frac{1}{p-1}}\,U_n(\lambda_n^{-1/2}\,y+P^i_n): = V^i_n(y)\rightarrow V^i(y)\quad \textrm{in}
\,\, \mathcal{C}^1_{\loc}(\R^N)
\end{equation}
as $n\to +\infty$; hence, by \eqref{assk} and again from the vanishing of $V$ at infinity, we conclude that
\begin{equation}
%\label{limp1p2}
\lambda_n^{1/2}|P_n^{k+1}-P^i_n|\rightarrow +\infty
\end{equation}
as $n\to \infty$, for every $i=1,...,k$. Setting now
\begin{equation}
\nonumber
\tilde\eps^{k+1}_n: =|U_n(P^{k+1}_n)|^{-\frac{p-1}{2}}\quad \mathrm{and}
\quad R_n^{(k+1)}:=\frac{1}{2}\,\frac{d_{n,k}(P^{k+1}_n)}{\tilde\eps^{k+1}_n}
\end{equation}
we still have $\tilde\eps^{k+1}_n\to 0$ and, by \eqref{assk}, $R_n^{(k+1)}  \to +\infty$  as $n\to \infty$ (see Lemma \ref{distbd}). Then, by the same arguments as in Lemma \ref{distbd}, we get
\begin{equation}
\label{maxpkn}
|U_n(P_n^{k+1})|=\max_{\Om\cap B_{R^{(k+1)}_n\tilde\eps^{k+1}_n}(P^{k+1}_n)} |u_n|\,,
\end{equation}
and furthermore
$$ \lim_{n\to +\infty}\tilde\eps_n^{k+1}\,\sqrt{\lambda_n}>0\,,$$
so that by defining $R_n=:R^{(k+1)}_n\tilde\eps_n^{k+1}\,\sqrt{\lambda_n}\rightarrow +\infty$
we have
\begin{equation}
\label{maxpknball}
|U_n(P_n^{k+1})|=\max_{\Om\cap B_{R_n\lambda^{-1/2}_n}(P^{k+1}_n)}| U_n|.
\end{equation}
Now, by the same arguments as in \cite{MR2825606}, it turns out that the iterative procedure must stop after \emph{at most} $\bar k-1$ steps, where
$\bar k =\lim_{n\to +\infty} m(u_n)$. Thus, we have proved:
\begin{proposition}
\label{glob1}
Let $\{U_n\}_n$ be a solution sequence to \eqref{equn} such that $\lambda_n\to+\infty$
and $m(U_n)\leq\bar k$.
Then, up to a subsequence, there exist $P_n^1,...,P_n^k$, with $k\le \bar k$ such that
\begin{equation}
\label{limpin}
\sqrt{\lambda_n}\,d(P^i_n,\partial\Om)\rightarrow +\infty;\quad \lambda_n^{1/2}|P_n^i-P^j_n|\rightarrow +\infty,\quad\quad i,j=1,...,k,\quad i\neq j
\end{equation}
as $n\to +\infty$ and
\begin{equation}
\nonumber
|U_n(P_n^{i})|=\max_{\Om\cap B_{R_n\lambda^{-1/2}_n}(P^{i}_n)}|U_n|,\quad i=1,...,k,
\end{equation}
for some $R_n\to +\infty$ as $n\to +\infty$. Finally,
\begin{equation}
\label{limhr0}
\lim_{R\to +\infty} h_k(R)=0
\end{equation}
where $h_k(R)$ is given by \eqref{defhn}.
\end{proposition}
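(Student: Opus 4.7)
\medskip
\noindent
\textbf{Plan of proof.} The discussion preceding the statement already performs the iterative construction: provided $\limsup_{R\to+\infty} h_j(R)>0$ at step $j$, we produce a new point $P_n^{j+1}$ at which $|U_n|$ attains a local maximum on a ball of radius $R_n\lambda_n^{-1/2}$ with $R_n\to+\infty$, and Lemmas \ref{disj}--\ref{distbd} (and their iterates, using Remark \ref{rem4}) give the separation from previous points and from $\partial\Omega$ required in \eqref{limpin}. Furthermore, for each $i$ the rescaled sequence $V_n^i(y)=\lambda_n^{-1/(p-1)}U_n(\lambda_n^{-1/2}y+P_n^i)$ converges in $\mathcal{C}^1_{\loc}(\R^N)$ to a non-trivial solution $V^i\in H^1(\R^N)$ of $-\Delta V^i + V^i = |V^i|^{p-1}V^i$, by Theorem \ref{thm:unif_est_Farina}. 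Therefore the only content still to establish is that the iteration must terminate after at most $\bar k$ steps, in which case \eqref{limhr0} holds by definition of the stopping condition.

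The plan is to argue by contradiction: if $k+1$ blow-up points could be produced with $k+1>\bar k$, we will build $k+1$ linearly independent functions spanning a subspace on which the Morse quadratic form
\[
Q_n(v,v):=\int_\Omega |\nabla v|^2+\lambda_n v^2-p|U_n|^{p-1}v^2\,dx
\]
is negative definite, contradicting $m(U_n)\le \bar k$. Fix a cutoff $\chi_R\in C_c^\infty(\R^N)$ equal to $1$ on $B_R$ and supported in $B_{2R}$, and set
\[
\psi_n^{i}(x):=\lambda_n^{1/(p-1)}\,\chi_R\bigl(\lambda_n^{1/2}(x-P_n^{i})\bigr)\,V_n^{i}\bigl(\lambda_n^{1/2}(x-P_n^{i})\bigr),\qquad i=1,\dots,k+1.
\]
By \eqref{limpin} and the boundary-distance property, for $n$ large the supports of the $\psi_n^{i}$ are pairwise disjoint and contained in $\Omega$, so the family is linearly independent and the mixed terms in $Q_n$ vanish. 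A change of variable $y=\lambda_n^{1/2}(x-P_n^{i})$ converts $Q_n(\psi_n^{i},\psi_n^{i})$ into the integral on $B_{2R}$ of the quadratic form associated with \eqref{limprob2}, evaluated at $\chi_R V_n^{i}$, up to an overall positive factor $\lambda_n^{(2/(p-1))+1-N/2}$. Passing to the limit via the $\mathcal{C}^1_{\loc}$ convergence $V_n^{i}\to V^{i}$, this integral tends to
\[
\int_{\R^N}|\nabla(\chi_R V^{i})|^2+(\chi_R V^{i})^2-p|V^{i}|^{p-1}(\chi_R V^{i})^2\,dx,
\]
which for $R$ large approaches $(1-p)\int_{\R^N}|V^{i}|^{p+1}<0$ since $V^{i}$ is a non-trivial $H^1$-solution (testing the linearised operator at $V^{i}$ against $V^{i}$ itself gives exactly this quantity). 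Hence $Q_n(\psi_n^{i},\psi_n^{i})<0$ for every $i$ and $n$ large, so $Q_n$ is negative definite on the $(k+1)$-dimensional span, contradicting the Morse index bound.

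The main technical point is the construction and cross-term control of the test functions $\psi_n^i$: one has to make sure that (i) the rescaling factor in the definition is chosen so that $\psi_n^i$ is a genuine $H^1_0(\Omega)$ function with disjoint support from the others, and (ii) the limiting quadratic form is strictly negative, which is precisely where the non-triviality $|V^{i}(0)|=\tilde\lambda_i^{-1/(p-1)}>0$ and the vanishing at infinity (Theorem \ref{thm:unif_est_Farina}) are used. Once the iteration is known to stop at some $k\le\bar k$, the defining property $\limsup_{R\to+\infty} h_k(R)=0$ is \eqref{limhr0}, and the remaining statements of the proposition are already encoded in the inductive construction summarised above, so the proof is complete.
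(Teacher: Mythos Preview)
Your proposal is correct and follows the same route as the paper, which in fact does not write out the termination argument at all but simply defers to \cite{MR2825606}; you have supplied precisely the standard Morse-index counting argument that lies behind that citation. The construction of the disjointly supported test functions $\psi_n^i$, the scaling computation giving the common positive factor $\lambda_n^{2/(p-1)+1-N/2}$, and the use of $V^i\in H^1(\R^N)$ to obtain the strict negativity $(1-p)\int|V^i|^{p+1}<0$ in the limit $R\to\infty$ are all correct and are exactly what is needed to bound the number of bubbles by $\bar k$. One minor point of presentation: the stopping condition is $\limsup_{R\to+\infty}h_k(R)=0$, whereas the statement asserts $\lim_{R\to+\infty}h_k(R)=0$; you might note explicitly that $h_k$ is non-increasing in $R$ (larger $R$ means maximising over a smaller set), so that the two coincide.
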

We now show that the sequence $U_n$ decays exponentially away from the blow-up points.
\begin{proposition}
\label{glob2}
Let $\{U_n\}_n$ satisfy the assumptions of Proposition \ref{glob1}. Then, there exist
$P_n^1,...,P_n^k$ and
positive constants $C$, $\gamma$, such that
\begin{equation}
\label{stimglob}
|U_n(x)|\le C\lambda^{\frac{1}{p-1}}_n \sum_{i=1}^ke^{-\gamma\sqrt{\lambda_n}|x-P_n^i|}\,,\quad\quad \forall \,x\in\Om,\quad n\in\N\,.
\end{equation}
\end{proposition}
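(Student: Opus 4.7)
The plan is to combine the uniform pointwise control near each blow-up point $P^i_n$---provided by the rescaling in \eqref{limblowseqi} and Theorem \ref{thm:unif_est_Farina}(3)---with a Kato-plus-comparison argument in the region where the semilinear term is dominated by the linear one, using an exponential barrier centered at the $P^i_n$'s.

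First I would fix $R>0$ large (to be chosen) and split $\Om$ into $B_n := \{x \in \Om : d_{n,k}(x) \le R\lambda_n^{-1/2}\}$ and $G_n := \Om \setminus B_n$. In $B_n$ the local blow-up analysis together with Theorem \ref{thm:unif_est_Farina}(3) gives $|U_n(x)| \le C \lambda_n^{1/(p-1)}$ with $C$ depending only on $\bar k$. In $G_n$, \eqref{limhr0} shows that, taking $R$ large enough, one can ensure $|U_n|^{p-1}\le \lambda_n/2$ uniformly in $x\in G_n$ for all $n$ large. Kato's inequality applied to $-\Delta U_n + \lambda_n U_n = |U_n|^{p-1}U_n$ then yields
\[
-\Delta |U_n| + \tfrac{\lambda_n}{2}|U_n| \le \bigl(|U_n|^{p-1}-\tfrac{\lambda_n}{2}\bigr)|U_n| \le 0 \qquad \text{in } G_n.
\]

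Next I would introduce the barrier
\[
W_n(x) := C_0\,\lambda_n^{1/(p-1)} \sum_{i=1}^{k} e^{-\gamma\sqrt{\lambda_n}\,|x-P^i_n|},
\]
with a fixed $\gamma \in (0,1/\sqrt{2})$ and $C_0 := C\,e^{\gamma R}$. A standard radial computation shows that each summand satisfies, away from $P^i_n$,
\[
-\Delta e^{-\gamma\sqrt{\lambda_n}|x-P^i_n|} + \tfrac{\lambda_n}{2} e^{-\gamma\sqrt{\lambda_n}|x-P^i_n|} = \lambda_n\Bigl(\tfrac{1}{2}-\gamma^2 + \tfrac{\gamma(N-1)}{\sqrt{\lambda_n}\,|x-P^i_n|}\Bigr)e^{-\gamma\sqrt{\lambda_n}|x-P^i_n|} \ge 0,
\]
so $W_n$ is a classical supersolution of $-\Delta w + (\lambda_n/2)w = 0$ throughout $G_n$. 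On $\partial G_n \cap \Om$ each point lies on a sphere $\{|x - P^i_n| = R\lambda_n^{-1/2}\}$, hence $W_n \ge C_0 \lambda_n^{1/(p-1)} e^{-\gamma R} = C \lambda_n^{1/(p-1)} \ge |U_n|$, and on $\partial\Om$ we have $U_n = 0 \le W_n$.

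By the weak maximum principle applied to $|U_n| - W_n$, I conclude $|U_n| \le W_n$ throughout $G_n$. On $B_n$ the bound $|U_n| \le W_n$ is automatic from the uniform local estimate, because for every $x \in B_n$ some index $i$ satisfies $|x - P^i_n| \le R\lambda_n^{-1/2}$, so that the $i$-th summand in $W_n$ is at least $C_0 e^{-\gamma R}\lambda_n^{1/(p-1)} = C\lambda_n^{1/(p-1)}$. This yields \eqref{stimglob} on all of $\Om$, up to enlarging $C$. The main technical point is the coordinated choice of $\gamma$ and $R$: $\gamma < 1/\sqrt{2}$ is needed so that $W_n$ is a genuine supersolution uniformly in $n$, while $R$ must be large enough for \eqref{limhr0} to yield $|U_n|^{p-1}\le\lambda_n/2$; both choices depend only on $\bar k$ and $p$, and once they are fixed the rest is the standard comparison argument.
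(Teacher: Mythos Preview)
Your proof is correct and follows essentially the same route as the paper's: split $\Omega$ into a near region $B_n$ where the blow-up scaling gives the uniform bound $|U_n|\le C\lambda_n^{1/(p-1)}$, and a far region $G_n$ where \eqref{limhr0} makes the nonlinear term subordinate to the linear one, then compare $|U_n|$ with the same exponential barrier (same $\gamma<1/\sqrt{2}$ and $C_0=Ce^{\gamma R}$). The only cosmetic difference is that the paper applies the minimum principle to the variable-coefficient operator $-\Delta+\bigl(\lambda_n-p|U_n|^{p-1}\bigr)$, requiring $|U_n|^{p-1}\le \lambda_n/(2p)$ in $G_n$, whereas you use Kato's inequality and the constant-coefficient operator $-\Delta+\lambda_n/2$.
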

\begin{proof}
By \eqref{limhr0}, for large $R>0$ and $n>n_0(R)$ it holds
\begin{equation}
\nonumber
\lambda_n^{-\frac{1}{p-1}}\max_{d_{n,k}(x)\ge R\,\lambda_n^{-1/2}}
|U_n(x)| \le  \Bigr (\frac{1}{2p} \Bigl )^{\frac{1}{p-1} }
\end{equation}
Then, for $n>n_0(R)$ and for $x\in \{d_{n,k}(x)\ge R\,\lambda_n^{-1/2}\}$, we have
\begin{equation}
\nonumber
a_n(x): = \lambda_n-p |U_n(x)|^{p-1}\ge \lambda_n-\frac{\lambda_n}{2}=\frac{\lambda_n}{2}
\end{equation}
We stress that the linear operator
\begin{equation}
\nonumber
L_n: =-\Delta + a_n(x)
\end{equation}
comes from the linearization of equation \eqref{equn} at $U_n$; let us compute this operator on the functions
$$\phi^i_n(x)=e^{-\gamma\sqrt{\lambda_n}\,|x-P^i_n|}\,,\quad\quad \gamma>0,\quad\quad i=1,...,k$$
in $\{d_{n,k}(x)\ge R\,\lambda_n^{-1/2}\}$. We obtain:
$$L_n \phi^i_n(x)=\lambda_n\phi^i_n(x)\Bigr [-\gamma^2+(N-1)\frac{\gamma}{\sqrt{\lambda_n}\,|x-P^i_n|}
+\frac{a_n(x)}{\lambda_n}\Bigl ]\ge \lambda_n\phi^i_n(x)\bigr [-\gamma^2+1/2\bigl ]\ge 0$$
for $n$ large, provided $0<\gamma\le 1/\sqrt 2$. Moreover, for $|x-P^i_n|=R\lambda_n^{-1/2}$, $ i=1,...,k,$ and $R$ large we have
$$
e^{\gamma R}\phi^i_n(x)-\lambda_n^{-\frac{1}{p-1}}|U_n(x)|=
1-\lambda_n^{-\frac{1}{p-1}}|U_n(x)|>0
$$
as $n\to +\infty$, by \eqref{limblowseq}. Note further that
$$\{x:d_{n,k}(x)= R\,\lambda_n^{-1/2}\} = \bigcup_{i=1}^k \partial B_{R\,\lambda_n^{-1/2}}(P_n^i)
\subset \Om$$
for large enough $n$. Then, by defining
\begin{equation}
\nonumber
\phi_n: = e^{\gamma R}\lambda_n^{\frac{1}{p-1}}\sum_{i=1}^k \,\phi^i_n
\end{equation}
we have
$$\phi_n(x)-|U_n(x)|\ge 0\quad\quad \mathrm{on}\quad\quad\{d_{n,k}(x)= R\,\lambda_n^{-1/2}\}\cup\partial\Om$$
and
\begin{equation}
\nonumber
L_n(\phi_n-|U_n|)\ge -L_n\,|U_n|=\Delta \,|U_n|-\lambda_n\,|U_n|+p|U_n|^p\ge (p-1)\,|U_n|^p\ge 0
\end{equation}
in $\Om\backslash \{d_{n,k}(x)\le R\,\lambda_n^{-1/2}\}$. Then (for $R$ large and $n\ge n_0(R)$)
we obtain $|U_n|\le \phi_n$ in the same set, by the minimum principle. Moreover, since by \eqref{limtildelam}
$$|U_n(x)|\le\|U_n\|_{L^{\infty}(\Om)}=|U_n(P^1_n)|\le  C \lambda_n^{\frac{1}{p-1}}$$
for some $C>0$,  we also have,  in $\{d_{n,k}(x)\le R\,\lambda_n^{-1/2}\}$,
$$
|U_n(x)|\le\|U_n(x)\|_{L^{\infty}(\Om)}=|U_n(P^1_n)|\le  C e^{\gamma R}\lambda_n^{\frac{1}{p-1}}\sum_{i=1}^k e^{-\gamma\sqrt{\lambda_n}|x-P_n^i|}.
$$
Then, possibly by choosing a larger $C$, estimate \eqref{stimglob} follows for every $n$.
%$$d_{n,k}(x): =\min\{|x-P^i_n|\,:\, i=1,...,k\}$$
\end{proof}
We now exploit the previous results to show that  suitable rescalings of the solutions to \eqref{eq:auxiliary_n} converge (locally) to some bounded solution $V$ of
\begin{equation}
\label{eqV}
-\Delta  V+ V=| V|^{p-1} V
\end{equation}
in $\R^N$.
\begin{lemma}
\label{lemlim1}
Let \eqref{eq:mainass_secMorse} hold.
Then $|u_n|$ admits $k\le \bar k$ local maxima $P_n^1,...,P_n^k$ in $\Om$ such that, defining
\begin{equation}
%\label{convrisc}
u_{i,n}(x)= \Bigl ( \frac{\mu_n}{\lambda_n}\Bigr )^{\frac{1}{p-1}}u_n \bigr (\frac{x}
{\sqrt {\lambda_n}}+P_n^i\bigl ),\quad\quad x\in \Om_{n,i}:=\sqrt{\lambda_n}\bigr (\Om-P_n^i \bigl ),
\end{equation}
it results, up to a subsequence,
\begin{equation}
%\label{convrisc}
u_{i,n}(x)\rightarrow V_i\quad\quad \mathrm{in}\,\,\mathcal{C}^1_{\loc}(\R^n)\quad \mathrm{as}\,\,n\to +\infty,\quad \forall\,\,i=1,2,...,k,
\end{equation}
where $V_i$ is a bounded solution of \eqref{eqV} with $m(V_i)\le \bar{k}$.

\noindent As a consequence, for every $q\ge 1$,
\begin{equation}
\label{convlq}
\Bigl ( \frac{\mu_n}{\lambda_n}\Bigr )^{\frac{q}{p-1}}\lambda_n^{N/2}\int_{\Om} |u_n|^q \,dx\rightarrow
\sum_{i=1}^{k}\int_{\R^n}|V_i|^q\,dx\quad\quad \mathrm{as}\,\, n\to +\infty.
\end{equation}
\end{lemma}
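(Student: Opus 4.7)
The plan is to reduce everything to the blow-up analysis of $U_n := \mu_n^{1/(p-1)} u_n$ developed in Propositions \ref{glob1} and \ref{glob2}. Since $u_n$ and $U_n$ differ only by a positive multiplicative factor, their local extrema coincide. Thus Proposition \ref{glob1} furnishes $k\le\bar k$ sequences $P_n^1,\dots,P_n^k$ of local maxima of $|u_n|$ satisfying \eqref{limpin}, and Proposition \ref{glob2} yields the exponential decay
\[
|u_n(x)| \le C \left(\frac{\lambda_n}{\mu_n}\right)^{1/(p-1)} \sum_{i=1}^k e^{-\gamma \sqrt{\lambda_n}\,|x-P_n^i|}, \qquad x \in \Om.
\]

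Next, a direct computation shows that the rescaled functions in the statement coincide with the blow-up sequence \eqref{defVn} at centre $P_n^i$ and scaling parameter $\eps_n = \lambda_n^{-1/2}$, namely
\[
u_{i,n}(x) = \lambda_n^{-1/(p-1)}\, U_n(\lambda_n^{-1/2} x + P_n^i) = V_n^i(x).
\]
Since $\sqrt{\lambda_n}\,d(P_n^i,\partial\Om)\to+\infty$ by \eqref{limpin}, the domains $\Om_{n,i}$ exhaust $\R^N$; the discussion leading to \eqref{limprob2}, combined with Theorem \ref{thm:unif_est_Farina}, then gives $u_{i,n}\to V_i$ in $\mathcal{C}^1_{\loc}(\R^N)$. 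Here $\eps_n^2 \lambda_n = 1$, so the limit equation is precisely \eqref{eqV} rather than the auxiliary \eqref{limprob1}; the Morse-index bound $m(V_i)\le \bar k$ follows from the lower semicontinuity of the Morse index along $\mathcal{C}^1_{\loc}$-convergent sequences of solutions, as in the proof of \cite[Theorem 3.1]{MR2825606}.

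For the $L^q$-convergence \eqref{convlq}, I would split $\Om$ into the balls $B_{R\lambda_n^{-1/2}}(P_n^i)$, which by \eqref{limpin} are pairwise disjoint for $n$ large, and their complement. On each ball, the change of variable $y=\sqrt{\lambda_n}(x-P_n^i)$ gives
\[
\left(\frac{\mu_n}{\lambda_n}\right)^{q/(p-1)} \lambda_n^{N/2} \int_{B_{R\lambda_n^{-1/2}}(P_n^i)} |u_n|^q\,dx = \int_{B_R(0)} |u_{i,n}(y)|^q\,dy,
\]
which converges to $\int_{B_R(0)} |V_i|^q\,dy$ by $\mathcal{C}^1_{\loc}$-convergence together with the uniform $L^\infty$ bound from Theorem \ref{thm:unif_est_Farina}. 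The main technical point is to show that the outer region contributes negligibly: raising the exponential-decay estimate to the $q$-th power (using $(\sum a_i)^q\le k^{q-1}\sum a_i^q$ for $q\ge 1$) and rescaling, its contribution is bounded by a constant multiple of $\int_{|y|\ge R} e^{-\gamma q|y|}\,dy$, which vanishes as $R\to\infty$ uniformly in $n$. Taking $n\to\infty$ first and then $R\to\infty$ then yields \eqref{convlq}.
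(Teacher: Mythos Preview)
Your proof is correct and follows essentially the same route as the paper: you invoke Propositions \ref{glob1} and \ref{glob2} for the blow-up points and exponential decay, identify $u_{i,n}$ with the rescaled sequence $V_n^i$ from \eqref{defVn}, and prove \eqref{convlq} by the same ball/complement splitting with the bound $(\sum a_i)^q\le k^{q-1}\sum a_i^q$ on the tail. The only cosmetic omission is that you do not explicitly cite Lemma \ref{localblow} to justify $\lambda_n\to+\infty$ before applying Propositions \ref{glob1} and \ref{glob2}, but this is immediate from assumption \eqref{eq:mainass_secMorse}.
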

\begin{proof}
By Lemma \ref{localblow} we have $\lambda_n\to +\infty$; then, the first part of the lemma follows by definition \eqref{equn}, by \eqref{limblowseqi} and by Proposition \ref{glob1}; by the same proposition
and by Proposition \ref{glob2} we also have that the local maxima $P^i_n$ satisfies \eqref{limpin} and that the pointwise estimate
\begin{equation}
\label{stimglobvn}
|u_n(x)|\le C\Bigl (\frac{\lambda_n}{\mu_n}\Bigr )^{\frac{1}{p-1}} \sum_{i=1}^ke^{-\gamma\sqrt{\lambda_n}|x-P_n^i|}\,,\quad\quad \forall \,x\in\Om,\quad n\in\N\,.
\end{equation}
holds. Let us fix $R>0$ and set $r_n=R/\sqrt{\lambda_n}$; for large enough $n$, \eqref{limpin} implies
$$B_{r_n}(P^i_n)\subset \Om,\quad\quad B_{r_n}(P^i_n)\cap B_{r_n}(P^j_n)=\emptyset, \quad i\neq j.$$
Then we obtain
\[
\begin{array}{cl}
&\left |\left ( \frac{\mu_n}{\lambda_n}\right)^{\frac{q}{p-1}}\lambda_n^{N/2}\int_{\Om} |u_n|^q \,dx-
\sum_{j=1}^{k}\int_{B_R(0)}|u_{j,n}|^q\,dx\,\,\right |
\smallskip\\&
=\left ( \frac{\mu_n}{\lambda_n}\right )^{\frac{q}{p-1}}\lambda_n^{N/2}\left |\int_{\Om} |u_n|^q \,dx-
\sum_{j=1}^{k}\int_{B_{r_n}(P^j_n)}|u_{n}|^q\,dx\,\,\right |
\smallskip\\
&=\left ( \frac{\mu_n}{\lambda_n}\right )^{\frac{q}{p-1}}\lambda_n^{N/2}
\int_{\Om\backslash \bigcup_{j=1}^k\,B_{r_n}(P^j_n)} |u_n|^q \,dx
%\smallskip\\
%&
\le C^q\lambda_n^{N/2}
\int_{\Om\backslash \bigcup_{j=1}^k\,B_{r_n}(P^j_n)} \left |\sum_{i=1}^ke^{-\gamma\sqrt{\lambda_n}|x-P_n^i|}\right |^q \,dx
\smallskip\\
&\le C^qk^{q-1}\lambda_n^{N/2} \sum_{i=1}^k
\int_{\Om\backslash \bigcup_{j=1}^k\,B_{r_n}(P^j_n)} e^{-q\gamma\sqrt{\lambda_n}|x-P_n^i|} \,dx
\smallskip\\
&\le C^qk^{q-1}\lambda_n^{N/2} \sum_{i=1}^k
\int_{\R^N\backslash \,B_{r_n}(P^i_n)} e^{-q\gamma\sqrt{\lambda_n}|x-P_n^i|} \,dx
\smallskip\\
&\le (Ck)^{q}\sum_{i=1}^k
\int_{\R^N\backslash \,B_{R}(0)} e^{-q\gamma\,|y|} \,dy\le C_1 \,e^{-C_2 R},
\end{array}
\]
for some positive $C_1$, $C_2$. Letting $n\to +\infty$ we have, up to subsequences,
\begin{multline*}
\Bigg |\lim_{n\to +\infty}\Bigl ( \frac{\mu_n}{\lambda_n}\Bigr )^{\frac{q}{p-1}}\lambda_n^{N/2}\int_{\Om} |u_n|^q \,dx-
\sum_{i=1}^{k}\int_{B_R(0)}|V_i|^q\,dx\,\,\Bigg |
\\
=\lim_{n\to +\infty}\Bigg |\Bigl ( \frac{\mu_n}{\lambda_n}\Bigr )^{\frac{q}{p-1}}\lambda_n^{N/2}\int_{\Om} |u_n|^q \,dx-
\sum_{i=1}^{k}\int_{B_R(0)}|u_{i,n}|^q\,dx\,\,\Bigg |\le C_1 \,e^{-C_2 R}.
\end{multline*}
Then, \eqref{convlq} follows by taking $R\to +\infty$.
\end{proof}
The previous lemma allows us to gain some information on the asymptotic behavior of the sequences $\lambda_n$, $\mu_n$ and $\|u_n\|_{L^{p+1}(\Omega)}$. We first provide some bounds for the solutions of the limit problem \eqref{eqV} which will be useful in the sequel.
\begin{lemma}
\label{boundbelow}
Let $V_i$, $i=1,\dots,k$ be as in Lemma \ref{lemlim1} (so that $m(V_i)\leq\bar k$). There exists a constant $C$, only depending on the full sequence $\{u_n\}_n$ and not on $V_i$
(and on the particular associated subsequence), such that
\[
\|V_i\|_{H^1}^2 = \|V_i\|_{L^{p+1}}^{p+1} \leq C.
\]
Furthermore, if also $m(V_i)\geq2$ (or, equivalently, if $V_i$ changes sign)
the following estimates hold:
\begin{equation}
\label{uppstiml2}
\|V_i\|^{p+1}_{L^{p+1}}> 2\,\|Z\|^{{p+1}}_{L^{{p+1}}},\qquad
\|V_i\|^2_{L^2}> 2\,\|Z\|^{2}_{L^{2}},
\end{equation}
where $Z\equiv Z_{N,p}$ is the unique positive solution to \eqref{eqV}.
\end{lemma}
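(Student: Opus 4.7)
The identity $\|V_i\|_{H^1(\R^N)}^2 = \|V_i\|_{L^{p+1}(\R^N)}^{p+1}$ follows immediately by testing \eqref{eqV} against $V_i$ and integrating by parts over $\R^N$: no boundary terms arise because, by Theorem \ref{thm:unif_est_Farina}, $V_i\in H^1(\R^N)\cap L^{p+1}(\R^N)$. For the uniform upper bound, my plan is to transfer the exponential decay in Proposition \ref{glob2} to the rescaled profiles: rewriting \eqref{stimglobvn} in terms of the $u_{i,n}$ of Lemma \ref{lemlim1} gives $|u_{i,n}(x)|\le C\sum_{j=1}^{k}e^{-\gamma|x-\sqrt{\lambda_n}(P_n^j-P_n^i)|}$, and letting $n\to+\infty$, property \eqref{limpin} sends to infinity the centers with $j\ne i$, leaving a pointwise bound $|V_i(x)|\le C\,e^{-\gamma|x|}$ in which $C$ and $\gamma$ depend only on the original sequence $\{u_n\}_n$. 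Integration then yields $\|V_i\|_{L^{p+1}}^{p+1}\le \tilde C$ uniformly, and the Nehari identity above gives the same bound for $\|V_i\|_{H^1}^2$.

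Suppose next that $V_i$ changes sign, so that $V_i^\pm:=\max\{\pm V_i,0\}\in H^1(\R^N)\setminus\{0\}$. Testing \eqref{eqV} against $V_i^\pm$ (legitimate thanks to the exponential decay) yields $\int_{\R^N}|\nabla V_i^\pm|^2+(V_i^\pm)^2\,dx=\int_{\R^N}(V_i^\pm)^{p+1}\,dx$, i.e., both functions sit on the Nehari manifold $\mathcal{N}$ associated with \eqref{eqV} on $\R^N$. A standard variational argument identifies the minimizer of $\|\cdot\|_{L^{p+1}}^{p+1}$ on $\mathcal{N}$ with the positive ground state $Z$ up to translations: any minimizer is a weak solution of \eqref{eqV} and, up to replacing it with its absolute value, may be taken non-negative, hence a translate of $Z$ by the uniqueness theorem of \cite{MR969899}. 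Since $V_i^\pm$ vanishes on a set of positive measure (the region where $V_i$ has the opposite sign), it cannot be a translate of $Z>0$, so we obtain the strict inequality $\|V_i^\pm\|_{L^{p+1}}^{p+1}>\|Z\|_{L^{p+1}}^{p+1}$; summing over the two parts produces the first estimate in \eqref{uppstiml2}.

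For the $L^2$ estimate I would invoke the Pohozaev identity in $\R^N$: the exponential decay of $V_i$ legitimates the integration by parts and gives $\frac{N-2}{2}\int|\nabla V_i|^2+\frac{N}{2}\int V_i^2=\frac{N}{p+1}\int|V_i|^{p+1}$. Coupled with the Nehari identity from the first paragraph, an elementary linear algebra computation (using Sobolev subcriticality $p<2^*-1$ to ensure the coefficients have the right sign) yields a relation $\|V_i\|_{L^2}^2=r(N,p)\,\|V_i\|_{L^{p+1}}^{p+1}$ with an explicit positive constant $r(N,p)$ depending only on $N$ and $p$. The identical identity applies to $Z$, so $\|V_i\|_{L^2}^2/\|Z\|_{L^2}^2=\|V_i\|_{L^{p+1}}^{p+1}/\|Z\|_{L^{p+1}}^{p+1}>2$ by the previous step, which is the second estimate in \eqref{uppstiml2}. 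The delicate step is the rigidity in the Nehari argument: ruling out equality $\|V_i^\pm\|_{L^{p+1}}^{p+1}=\|Z\|_{L^{p+1}}^{p+1}$ requires the uniqueness result of \cite{MR969899}, and without it we would only obtain non-strict inequalities.
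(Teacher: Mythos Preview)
Your proof is correct and, in both parts, takes a more direct route than the paper. For the uniform upper bound, the paper argues that each $V_i$ is stable outside a fixed ball $B_{\bar R}$ (choosing $\bar R$ via \eqref{limhr0} so that $|V_i|^{p-1}\le 1/p$ there), then controls $\int_{B_{\bar R}}|V_i|^{p+1}$ with the uniform $L^\infty$ estimate of Theorem~\ref{thm:unif_est_Farina} and the tail integral by reasoning as in \cite{MR2825606}; you instead pass the global pointwise estimate \eqref{stimglob} through the rescaling and to the limit, obtaining directly $|V_i(x)|\le Ce^{-\gamma|x|}$ with the constants of Proposition~\ref{glob2}, which is cleaner given the machinery already in place. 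For the strict lower bounds, the paper quotes Weth's energy doubling inequality $E(V_i)>2E(Z)$ from \cite{MR2263672} and then uses the Nehari and Pohozaev identities \eqref{eulp} to convert energy into $\|\cdot\|_{L^{p+1}}^{p+1}$ and $\|\cdot\|_{L^2}^2$; your Nehari argument on $V_i^\pm$ is essentially an in-line proof of that very inequality (since $E(V_i)=E(V_i^+)+E(V_i^-)$ and each piece exceeds $E(Z)$), so your version is more self-contained while the paper's is shorter by citation. One minor remark on your last paragraph: strictness does not actually require Kwong's uniqueness \cite{MR969899}. If $V_i^+$ achieved the Nehari infimum it would be a nonnegative weak solution of \eqref{eqV}, hence smooth by elliptic regularity and strictly positive by the strong maximum principle, contradicting the fact that it vanishes on the nonempty open set $\{V_i<0\}$.
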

\begin{proof}
To prove the bounds from above we claim that there exists $\bar R>0$, not depending on $i$, such that $V_i$ is stable outside $\overline{B_{\bar R}}$. Then
the desired estimate will follow, since
\[
\|V_i\|^{p+1}_{L^{p+1}} = \int_{B_{\bar R}} |V_i|^{p+1} + \int_{\R^N\setminus B_{\bar R}} |V_i|^{p+1},
\]
where the first term is uniformly bounded by Theorem \ref{thm:unif_est_Farina}, while the second one can be estimated in an uniform way
by reasoning as in the proof of \cite[Theorem 2.3]{MR2825606}. To prove the claim, recalling \eqref{defhn} and \eqref{limhr0}, let
$\bar R$ be such that
\[
h_k(\bar R) \leq \left(\frac{1}{p}\right)^{1/(p-1)}.
\]
Then $|V_i(x)|^{p-1}\leq 1/p $ on $\R^N\setminus B_{\bar R}$ and thus, for any $\psi\in C^\infty_0(\R^N)$, $\psi\equiv0$ in $B_{\bar R}$, it holds
\[
\int_{\R^N} |\nabla \psi|^2 +  \psi^2 - p|V_i|^{p-1}\psi^2\,dx \geq \left( 1 - p \|V_i\|^{p-1}_{L^{\infty}(\R^N\setminus B_{\bar R})}\right)\int_{\R^N} \psi^2 \geq 0.
\]
Hence $V_i$ is stable outside $B_{\bar R}$, and the first part of the lemma follows.

On the other hand, if $V_i$ is a sign-changing solution to \eqref{eqV}, the associated energy functional
\begin{equation}
\nonumber
E(V_i)= \frac{1}{2}\|\nabla V_i\|^2_{L^2}+\frac{1}{2}\|V_i\|^2_{L^2}-\frac{1}{p+1}\|V_i\|^{p+1}_{L^{p+1}}
\end{equation}
 satisfies the following \emph{energy doubling property} (see \cite{MR2263672}):
 $$E(V_i)>2\,E(Z)$$
 On the other hand, by using the equation $E'(V_i)V_i=0$ and the Pohozaev identity  one gets
 \begin{equation}
\label{eulp}
\|V_i\|^{p+1}_{L^{p+1}}= 2\,\frac{p+1}{p-1}\,E(V_i),\qquad
\|V_i\|^2_{L^2}= \frac{N+2-p\,(N-2)}{p-1}\,E(V_i)
\end{equation}
Since the ground state solution $Z$ satisfies the same identities, the bounds \eqref{uppstiml2} are readily verified.
\end{proof}
\begin{proposition}
Let \eqref{eq:mainass_secMorse} hold and the functions $V_i$ be defined as in Lemma
\ref{lemlim1}. We have, as $n\to +\infty$,
\begin{eqnarray}
\label{convl2}
{\mu_n}^{\frac{2}{p-1}}\,\lambda_n^{N/2-2/(p-1)}&\longrightarrow
\sum_{i=1}^{k}\int_{\R^n}|V_i|^2\,dx
\\
\label{convlp}
{\mu_n}^{\frac{p+1}{p-1}}\,\lambda_n^{N/2-(p+1)/(p-1)}\int_{\Om} |u_n|^{p+1} \,dx&\longrightarrow
\sum_{i=1}^{k}\int_{\R^n}|V_i|^{p+1}\,dx
\\
\label{convl2grad}
\alpha_n\,{\mu_n}^{\frac{2}{p-1}}\,\lambda_n^{N/2-(p+1)/(p-1)}&\longrightarrow
\sum_{i=1}^{k}\int_{\R^n}|\nabla V_i|^2\,dx.
\end{eqnarray}
\end{proposition}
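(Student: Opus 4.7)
The plan is to deduce all three limits directly from the $L^q$-convergence \eqref{convlq} of Lemma \ref{lemlim1}, combined with the Nehari-type identity obtained by testing the equation in \eqref{eq:auxiliary_n} against $u_n$; no further blow-up analysis is needed.

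First I would prove \eqref{convl2} by applying \eqref{convlq} with $q=2$. The $L^2$-normalization $\int_\Om u_n^2\,dx = 1$ makes the left-hand side collapse to $(\mu_n/\lambda_n)^{2/(p-1)} \lambda_n^{N/2} = \mu_n^{2/(p-1)} \lambda_n^{N/2 - 2/(p-1)}$, which is exactly the expression in \eqref{convl2}. Then \eqref{convlp} would follow from \eqref{convlq} with $q=p+1$ after a trivial rearrangement of the powers of $\mu_n$ and $\lambda_n$, using the identity $2/(p-1)+1 = (p+1)/(p-1)$.

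For \eqref{convl2grad}, I would multiply the equation in \eqref{eq:auxiliary_n} by $u_n$ and integrate over $\Om$, obtaining the Nehari identity
\[
\alpha_n + \lambda_n = \mu_n \int_\Om |u_n|^{p+1}\, dx.
\]
Multiplying this by $\mu_n^{2/(p-1)} \lambda_n^{N/2 - (p+1)/(p-1)}$ and using the algebraic identity $1 - (p+1)/(p-1) = -2/(p-1)$, the second summand on the left becomes $\mu_n^{2/(p-1)} \lambda_n^{N/2 - 2/(p-1)}$, whose limit is provided by \eqref{convl2}, while the right-hand side has its limit from \eqref{convlp}. Subtracting yields
\[
\alpha_n\,\mu_n^{2/(p-1)} \lambda_n^{N/2 - (p+1)/(p-1)} \longrightarrow \sum_{i=1}^k \left(\int_{\R^N} |V_i|^{p+1}\,dx - \int_{\R^N} V_i^2\,dx\right),
\]
and the right-hand side equals $\sum_i \int_{\R^N}|\nabla V_i|^2\,dx$ by testing the limit equation $-\Delta V_i + V_i = |V_i|^{p-1} V_i$ against $V_i$ itself, which is legitimate because $V_i \in H^1(\R^N)\cap L^{p+1}(\R^N)$ by Theorem \ref{thm:unif_est_Farina} and Lemma \ref{boundbelow}. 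I do not foresee a substantive obstacle: the three limits are essentially bookkeeping corollaries of the blow-up analysis already carried out, and the only mildly delicate point, namely that $V_i$ may be used as a test function in its own equation, is already covered by the integrability results proved earlier.
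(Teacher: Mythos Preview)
Your proposal is correct and follows exactly the paper's own approach: both apply \eqref{convlq} with $q=2$ and $q=p+1$ to obtain \eqref{convl2} and \eqref{convlp}, and then derive \eqref{convl2grad} from the Nehari identity $\alpha_n+\lambda_n=\mu_n\|u_n\|_{L^{p+1}}^{p+1}$ combined with the relation $\int_{\R^N}|\nabla V_i|^2+\int_{\R^N}V_i^2=\int_{\R^N}|V_i|^{p+1}$ for the limit profiles. Your write-up is simply more explicit about the algebra and the justification for testing the limit equation with $V_i$, but the argument is the same.
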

\begin{proof}
The limits \eqref{convl2} and \eqref{convlp} follow respectively by choosing $q=2$ and $q=p+1$ in \eqref{convlq} (recall that $\|u_n\|_{L^{2}}=1$). Furthermore, from the equations
for $u_n$ and $V_k$, we have
\[
\alpha_n+\lambda_n=\mu_n\|u_n\|_{L^{p+1}}^{p+1},\qquad
\int_{\R^n}|\nabla V_i|^2\,dx + \int_{\R^n}|V_i|^2\,dx = \int_{\R^n}|V_i|^{p+1}\,dx,
\]
and also \eqref{convl2grad} follows.
\end{proof}
\begin{corollary}
\label{limmass}
With the same assumptions as above, we have that
\begin{enumerate}
  \item if $1<p<1+\frac{4}{N}$, then $\mu_n\to +\infty$
  \item if $p=1+\frac{4}{N}$, then $\mu_n\to \big (\sum_{i=1}^{k}\|V_i\|_{L^2}^2\big )^{2/N}\ge k^{2/N}
  \|Z\|_{L^2}^{4/N}$
  \item if $1+\frac{4}{N}<p<2^*-1$, then $\mu_n\to 0$.
\end{enumerate}
Furthermore
\begin{equation}
\label{limalphalam}
\frac{\alpha_n}{\lambda_n}\longrightarrow \frac{N(p-1)}{N+2-p(N-2)}.
\end{equation}
\end{corollary}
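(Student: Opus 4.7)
The plan is to read all three items directly off \eqref{convl2}, and then obtain \eqref{limalphalam} by dividing \eqref{convl2grad} by \eqref{convl2}. Set $S_1 := \sum_{i=1}^k \|V_i\|_{L^2(\R^N)}^2$. This quantity is strictly positive and finite: positivity is ensured because each $V_i$ is nontrivial (indeed $|V_i(0)| = \tilde\lambda^{-1/(p-1)} > 0$, see \eqref{limprob2}), while finiteness follows from the uniform $H^1$-bound of Lemma \ref{boundbelow}. Next observe that the exponent $N/2 - 2/(p-1)$ appearing in \eqref{convl2} is negative, zero, or positive exactly when $p$ is $L^2$-subcritical, critical, or supercritical. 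Since $\lambda_n \to +\infty$ and the whole product tends to the positive constant $S_1$, the factor $\mu_n^{2/(p-1)}$ must blow up in the subcritical case (item 1), converge to $S_1$ in the critical case (item 2, noting $(p-1)/2 = 2/N$), and vanish in the supercritical case (item 3).

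For the sharp lower bound in the critical case I would estimate $S_1$ term by term by showing $\|V_i\|_{L^2}^2 \ge \|Z\|_{L^2}^2$ for every $i$. If $V_i$ is sign-changing, Lemma \ref{boundbelow} gives the stronger $\|V_i\|_{L^2}^2 > 2\|Z\|_{L^2}^2$. Otherwise $V_i$ has constant sign and, by the uniqueness up to translation of the positive solution to \eqref{eqV} (\cite{MR969899}), $V_i = \pm Z(\cdot - y_i)$ for some $y_i\in\R^N$, so that $\|V_i\|_{L^2}^2 = \|Z\|_{L^2}^2$. Summing over $i=1,\dots,k$ yields $S_1 \ge k\|Z\|_{L^2}^2$, and raising to the power $2/N$ produces the claimed bound $\mu_n \to S_1^{2/N} \ge k^{2/N}\|Z\|_{L^2}^{4/N}$.

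For \eqref{limalphalam}, dividing \eqref{convl2grad} by \eqref{convl2} the factors $\mu_n^{2/(p-1)}$ cancel and the exponents of $\lambda_n$ combine to $\lambda_n^{-1}$, so the left-hand side becomes $\alpha_n/\lambda_n$, while the right-hand side equals $\bigl(\sum \|\nabla V_i\|_{L^2}^2\bigr)/S_1$. It remains to compute this ratio. For any finite-energy solution $V$ of \eqref{eqV}, testing against $V$ gives the Nehari identity $\|\nabla V\|_{L^2}^2 + \|V\|_{L^2}^2 = \|V\|_{L^{p+1}}^{p+1}$, while the Pohozaev identity reads $\frac{N-2}{2}\|\nabla V\|_{L^2}^2 + \frac{N}{2}\|V\|_{L^2}^2 = \frac{N}{p+1}\|V\|_{L^{p+1}}^{p+1}$. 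Eliminating $\|V\|_{L^{p+1}}^{p+1}$ between the two yields
\[
\frac{\|\nabla V\|_{L^2}^2}{\|V\|_{L^2}^2} = \frac{N(p-1)}{N+2 - p(N-2)},
\]
a universal constant independent of the particular $V_i$. Hence the same ratio holds for the sums, and \eqref{limalphalam} follows.

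The only genuinely nontrivial point is the sharp lower bound $S_1 \ge k\|Z\|_{L^2}^2$ in the critical case; this is where the uniqueness result of \cite{MR969899} enters essentially to handle the possibility of non-sign-changing limits. All other steps are algebraic manipulations of the limits already established.
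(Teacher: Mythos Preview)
Your proof is correct and follows essentially the same approach as the paper: both read items 1--3 off \eqref{convl2} via the sign of the exponent $N/2 - 2/(p-1)$, handle the lower bound in item 2 by the dichotomy ``$V_i$ is (a translate of) $\pm Z$ or else sign-changing with $\|V_i\|_{L^2}^2 > 2\|Z\|_{L^2}^2$'', and obtain \eqref{limalphalam} by dividing \eqref{convl2grad} by \eqref{convl2} and invoking the Nehari--Pohozaev relations (the paper quotes these as \eqref{eulp}). Your write-up simply makes explicit the details the paper leaves implicit.
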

\begin{proof}
The limits of $\mu_n$ follow by the previous proposition. To prove the lower bound in $2$, recall that either $V_i=Z$ or $V_i$ satisfies \eqref{uppstiml2}. Finally, taking the quotient between \eqref{convl2grad} and \eqref{convl2}, we have
$$
\frac{\alpha_n}{\lambda_n}\longrightarrow
\frac{\sum_{i=1}^{k}\int_{\R^n}|\nabla V_i|^2\,dx}{\sum_{i=1}^{k}\int_{\R^n}| V_i|^2\,dx}
$$
On the other hand, for every $i=1,2,...,k$ it holds
$$
\|\nabla V_i\|_{L^2}^2=\Bigg (\frac{\| V_i\|_{L^{p+1}}^{p+1}}{\|V_i\|_{L^2}^2} -1 \Bigg )\|V_i\|_{L^2}^2=
\frac{N(p-1)}{N+2-p(N-2)}\, \|V_i\|_{L^2}^2
$$
where the last equality follows by \eqref{eulp}. By inserting this into the above limit, we get \eqref{limalphalam}.
\end{proof}
\begin{proof}[Proof of Theorem \ref{thm:bbd_index}]
Let $(U_n,\lambda_n)$ solve \eqref{eq:main_prob_U}, with $\rho=\rho_n\to  +\infty$
and $m(U_n)\leq k$. Changing variables as in \eqref{eq:main_prob_u}, we have that
$u_n=\rho_n^{-1/2}U_n$ satisfies \eqref{eq:auxiliary_n} with $\mu_n = \rho_n^{(p-1)/2} \to
+\infty$. As a consequence, Lemma \ref{lemma:case_alpha_n_bounded} guarantees that
$\alpha_n\to+\infty$, and Corollary \ref{limmass} yields $p<1+4/N$.

On the other hand, by direct minimization of the energy one can show that, if $p<1+4/N$, for
every $\rho>0$ there exists a solution of \eqref{eq:main_prob_U} having Morse index one (see
also Section \ref{sec:1const}).
\end{proof}
\begin{remark}
\label{limGN}
Reasoning as above we can also show that
\begin{equation}
\label{newcnp1}
\frac{\int_{\Om} |u_n|^{p+1} \,dx}{\alpha_n^{N(p-1)/4}}\longrightarrow
C_{N,p}\,\frac{\|Z\|_{L^2}^{p-1}}{\big (\sum_{i=1}^{k}\| V_i\|_{L^2}^2\big )^{(p-1)/2}}.
\end{equation}
\end{remark}

\section{Max-min principles with two constraints}\label{sec:2const}

In this section we deal with the maximization problem with two constraints introduced in \cite{MR3318740}, aiming at considering more general max-min classes of critical points.
Let $\Mcal$ be defined in \eqref{Emu} and, for any fixed $\alpha>\lambda_1(\Om)$, let $\mathcal{B}_\alpha$, $\mathcal{U}_\alpha$ be defined as in \eqref{eq:defBU}. We will look for critical points of the $\mathcal{C}^2$ functional
\[
f(u)=\int_{\Omega}|u|^{p+1},\quad\quad\quad u\in \Mcal,
\]
constrained to $\mathcal{U}_\alpha$. To start with, we notice that the topological
properties of such set depend on $\alpha$.
\begin{lemma}\label{lemma:tilde_U_manifold}
Let $\alpha>\lambda_1(\Om)$. Then the set
\[
\Ucal_{\alpha}\setminus\left\{ \varphi\in \Ucal_\alpha :
-\Delta\varphi = \alpha\varphi\right\}
\]
is a smooth submanifold of $H^1_0(\Omega)$ of codimension 2. In particular, this property
holds true for $\Ucal_\alpha$ itself, provided $\alpha\neq\lambda_k(\Om)$, for every $k$.
\end{lemma}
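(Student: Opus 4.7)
The plan is to realize $\Ucal_\alpha$ as the zero set of a smooth map $\Phi: H^1_0(\Omega)\to\R^2$ and then apply the regular value theorem (the submersion/implicit function theorem in Banach spaces). Specifically, define
\[
G_1(u) := \int_\Omega u^2\,dx, \qquad G_2(u):= \int_\Omega|\nabla u|^2\,dx,
\]
and set $\Phi = (G_1 - 1,\, G_2 - \alpha)$. Both $G_1$ and $G_2$ are smooth on $H^1_0(\Omega)$ with differentials
\[
dG_1(u)[v] = 2\int_\Omega u v\,dx, \qquad dG_2(u)[v] = 2\int_\Omega \nabla u\cdot\nabla v\,dx,
\]
so $\Ucal_\alpha = \Phi^{-1}(0,0)$. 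It suffices to show that $d\Phi(u)$ is surjective onto $\R^2$ at every $u\in\Ucal_\alpha$ outside the exceptional set; equivalently, that $dG_1(u)$ and $dG_2(u)$ are linearly independent as elements of $H^{-1}(\Omega)$.

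To verify this, suppose there exist $(a,b)\in\R^2\setminus\{(0,0)\}$ with $a\,dG_1(u) + b\,dG_2(u)=0$, i.e.
\[
a\int_\Omega uv\,dx + b\int_\Omega \nabla u\cdot\nabla v\,dx = 0\qquad\text{for every } v\in H^1_0(\Omega).
\]
If $b=0$, this forces $u\equiv 0$ in $L^2(\Omega)$, which is incompatible with $\|u\|_{L^2}=1$; hence $b\neq 0$. Dividing by $b$ and reading the identity in the distributional sense gives $-\Delta u = -(a/b)\,u$ in $\Omega$ with $u\in H^1_0(\Omega)$, so $u$ is a Dirichlet eigenfunction with some eigenvalue $\mu:=-a/b$. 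Pairing the eigenvalue equation with $u$ and using $\|u\|_{L^2}=1$ yields $\mu = \int_\Omega|\nabla u|^2\,dx = \alpha$. Therefore linear dependence at $u$ occurs if and only if $-\Delta u = \alpha u$, which is precisely the set we excise.

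Consequently, on $\Ucal_\alpha\setminus\{\varphi:-\Delta\varphi=\alpha\varphi\}$ the differential $d\Phi(u)\colon H^1_0(\Omega)\to\R^2$ is surjective with split kernel (the kernel has a topological complement, obtained by choosing any two vectors $v_1,v_2\in H^1_0(\Omega)$ on which $d\Phi(u)$ acts as an isomorphism of $\R^2$, which exist exactly because $d\Phi(u)$ is surjective). The regular value theorem then furnishes a smooth local chart in which $\Phi$ becomes a projection, so the zero set is a smooth embedded submanifold of codimension $2$. Finally, if $\alpha\neq \lambda_k(\Omega)$ for every $k$, the excised set is empty, because any $u\in\Ucal_\alpha$ with $-\Delta u = \alpha u$ would force $\alpha$ to belong to the Dirichlet spectrum; hence the conclusion extends to the whole of $\Ucal_\alpha$. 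The only subtlety to check is the surjectivity-versus-linear-independence equivalence above, but this is routine since the target is finite-dimensional.
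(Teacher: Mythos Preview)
Your proof is correct and follows essentially the same approach as the paper: both write $\Ucal_\alpha$ as the preimage of a point under a smooth map $H^1_0(\Omega)\to\R^2$ built from $\|u\|_{L^2}^2$ and $\|\nabla u\|_{L^2}^2$, and then check that the differential is surjective precisely away from the eigenfunctions of $-\Delta$ with eigenvalue $\alpha$. The only cosmetic difference is that the paper verifies surjectivity by noting $F'(u)[u]=2(1,\alpha)$ and identifying when the range collapses to this line, whereas you phrase it as linear independence of $dG_1(u)$ and $dG_2(u)$; the two arguments are equivalent.
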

\begin{proof}
Let us set $F(u)=(\int_\Omega u^2\,dx-1, \ \int_\Omega|\nabla u|^2\,dx)$. For every
$u\in\Ucal_\alpha$, if the range of $F'(u)$ is $\R^2$ then $\Ucal_\alpha$ is a smooth manifold
at $u$. Since
\[
F'(u)[v]=2\left(\int_\Omega uv\,dx, \ \int_\Omega\nabla u\cdot\nabla v\,dx\right),
\qquad\text{for every }v\in H^1_0(\Omega),
\]
and $F'(u)[u]=2(1,\alpha)$, we have that $F'(u)$ is not surjective if and only if
\[
\int_\Omega\nabla u\cdot\nabla v\,dx = \alpha \int_\Omega uv\,dx
\qquad\text{for every }v\in H^1_0(\Omega). \qedhere
\]
\end{proof}
\begin{remark}
If $\varphi$ belongs to the eigenspace corresponding to $\lambda_k(\Om)$, then
$\varphi \in \Ucal_{\lambda_k(\Om)}$. As a consequence $\Ucal_{\lambda_k(\Om)}$ may not be smooth
near $\varphi$. For instance, $\Ucal_{\lambda_1(\Om)}$ consists of two isolated points, $\pm\varphi_1$.
\end{remark}
Of course $\Ucal_\alpha$ is closed and odd, for any $\alpha$. Recalling Definition
\ref{def:genus} we deduce that its genus
$\gamma(\Ucal_\alpha)$ is well defined.
\begin{lemma}
If $\alpha<\lambda_{k+1}(\Om)$, for some $k$, then $\gamma(\Ucal_\alpha)\leq k$.
\end{lemma}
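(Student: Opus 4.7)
The plan is to show that if an odd continuous map $h\colon \sphere^{k}\to \Ucal_\alpha$ existed, we could compose it with a suitable odd retraction onto $\R^k\setminus\{0\}$ and contradict the Borsuk--Ulam theorem. Concretely, let $P_k\colon H^1_0(\Omega)\to \spann\{\varphi_1,\dots,\varphi_k\}$ be the orthogonal (in $L^2$, equivalently in $H^1_0$) projection onto the span of the first $k$ Dirichlet eigenfunctions.

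The key point is the claim that $P_k(u)\neq 0$ for every $u\in\Ucal_\alpha$ whenever $\alpha<\lambda_{k+1}(\Omega)$. Indeed, if $u=\sum_{j\ge 1}c_j\varphi_j\in\Mcal$ satisfies $P_k(u)=0$, then $c_1=\dots=c_k=0$, and therefore
\[
\int_\Omega |\nabla u|^2\,dx=\sum_{j\ge k+1}\lambda_j(\Omega)\,c_j^2\ge \lambda_{k+1}(\Omega)\sum_{j\ge k+1}c_j^2=\lambda_{k+1}(\Omega)\,\|u\|_{L^2}^2=\lambda_{k+1}(\Omega),
\]
which contradicts $u\in\Ucal_\alpha$. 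Hence the composition
\[
\tilde P_k\colon \Ucal_\alpha \longrightarrow \spann\{\varphi_1,\dots,\varphi_k\}\setminus\{0\},\qquad u\mapsto P_k(u),
\]
is well defined, continuous, and odd. After identifying $\spann\{\varphi_1,\dots,\varphi_k\}$ with $\R^k$ via the $L^2$-orthonormal basis, and normalizing $y\mapsto y/|y|$, we obtain an odd continuous map $\Ucal_\alpha\to \sphere^{k-1}$.

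Now suppose $\gamma(\Ucal_\alpha)\ge k+1$. By Definition \ref{def:genus}, there exists an odd continuous $h\in C(\sphere^{k};\Ucal_\alpha)$. Composing with the map above yields an odd continuous map $\sphere^{k}\to\sphere^{k-1}$, contradicting the Borsuk--Ulam theorem. Therefore $\gamma(\Ucal_\alpha)\le k$, as claimed. The only step requiring care is the projection argument, but once the eigenfunction expansion is written out it is immediate; the Borsuk--Ulam step is standard.
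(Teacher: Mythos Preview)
Your proof is correct and follows essentially the same approach as the paper: both project $\Ucal_\alpha$ onto $\spann\{\varphi_1,\dots,\varphi_k\}$, use the variational characterization of $\lambda_{k+1}(\Omega)$ to show the projection never vanishes, and then invoke Borsuk--Ulam. The only cosmetic difference is that you normalize to land in $\sphere^{k-1}$ whereas the paper works directly with $V_k\setminus\{0\}$.
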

\begin{proof}
Let $V_k:=\spann\{\varphi_1,\dots,\varphi_k\}$.
Since
\[
\min\left\{\int_\Omega |\nabla u|^2\,dx : u\in V_k^\perp,\,
\int_\Omega u^2\,dx=1\right\}=\lambda_{k+1}(\Om),
\]
we have that $\Ucal \cap V_k^\perp = \emptyset$, thus the projection
\[
g := \proj_{V_k} \from \Ucal_\alpha \to V_k\setminus\{0\}
\]
is a continuous odd map of $\Ucal_\alpha$ into $V_k\setminus\{0\}$. Now, let $h\from\sphere^{m}\to \Ucal$ be continuous and odd.
Then $g\circ h$ is continuous and odd from $\sphere^{m}$ to $V_k\setminus\{0\}$, and Borsuk-Ulam's Theorem forces $m\leq k-1$.
\end{proof}
\begin{lemma}\label{lemma:genusbigger}
If $\alpha>\lambda_{k}(\Om)$, for some $k$, then $\gamma(\Ucal_\alpha)\geq k$.
\end{lemma}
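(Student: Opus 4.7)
The goal is to exhibit an odd continuous map $h\colon \sphere^{k-1}\to \Ucal_\alpha$. The natural guess $t\mapsto \sum_{i=1}^k t_i\varphi_i$ lands in $\Mcal$ with Dirichlet energy at most $\lambda_k(\Om)<\alpha$, so it misses $\Ucal_\alpha$; a na\"ive remedy of adding a fixed high-frequency $\psi$ with $\|\nabla\psi\|^2>\alpha$ breaks oddness, since $\psi$ does not change sign with $t$. The plan is to restore oddness by replacing the single function $\psi$ with a ``twin'' combination that depends linearly on $t$, mirroring the low-frequency part.

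Since $\lambda_n(\Om)\to+\infty$, I fix $m\ge k$ with $\lambda_{m+1}(\Om)>\alpha$. For $t=(t_1,\dots,t_k)\in\sphere^{k-1}$ I set
\[
u_t:=\sum_{i=1}^k t_i\varphi_i,\qquad v_t:=\sum_{j=1}^k t_j\varphi_{m+j},
\]
which both lie in $\Mcal$ and are mutually orthogonal in $L^2$ and in $H^1_0(\Om)$, because the index blocks $\{1,\dots,k\}$ and $\{m+1,\dots,m+k\}$ are disjoint. A direct computation gives
\[
S(t):=\|\nabla u_t\|^2_{L^2}=\sum_{i=1}^k t_i^2\lambda_i\le \lambda_k<\alpha<\lambda_{m+1}\le\sum_{j=1}^k t_j^2\lambda_{m+j}=:T(t).
\]
Then I define $h(t):=a(t)\,u_t+b(t)\,v_t$, where $a(t),b(t)>0$ are determined by $a^2+b^2=1$ and $a^2 S(t)+b^2 T(t)=\alpha$, that is
\[
a(t)^2=\frac{T(t)-\alpha}{T(t)-S(t)},\qquad b(t)^2=\frac{\alpha-S(t)}{T(t)-S(t)}.
\]

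The required properties follow directly: the strict chain $S(t)<\alpha<T(t)$ makes $a,b$ continuous and strictly positive in $t$, so $h$ is continuous; using the bi-orthogonality of $u_t$ and $v_t$ one obtains $\|h(t)\|_{L^2}^2=a^2+b^2=1$ and $\|\nabla h(t)\|^2_{L^2}=a^2 S(t)+b^2 T(t)=\alpha$, so $h(t)\in \Ucal_\alpha$; finally $S$ and $T$ are even quadratic forms in $t$, whence $a(-t)=a(t)$, $b(-t)=b(t)$, while $u_{-t}=-u_t$ and $v_{-t}=-v_t$, giving $h(-t)=-h(t)$. Hence $\gamma(\Ucal_\alpha)\ge k$. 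The only non-routine ingredient, and the main point to get right, is the choice of a $t$-dependent high-frequency block $\sum_j t_j\varphi_{m+j}$ rather than a single fixed function: this is precisely what reconciles the need to boost the Dirichlet norm above $\alpha$ with the $\Z_2$-equivariance built into the definition of $\gamma$.
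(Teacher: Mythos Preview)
Your proof is correct and follows essentially the same idea as the paper: pair the low-frequency block $\varphi_1,\dots,\varphi_k$ with a disjoint high-frequency block $\varphi_{m+1},\dots,\varphi_{m+k}$ (the paper writes $\ell$ for your $m$) so that the $\Z_2$-equivariance is preserved while the Dirichlet energy is tuned to $\alpha$. The only cosmetic difference is that the paper fixes the mixing coefficients index by index, obtaining $k$ functions $u_i\in\Ucal_\alpha$ that are pairwise orthogonal in both $L^2$ and $H^1_0$, so that the resulting map $x\mapsto\sum_i x_i u_i$ is \emph{linear}; your $t$-dependent coefficients $a(t),b(t)$ give a nonlinear but equally valid odd embedding.
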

\begin{proof}
To prove the lemma we will construct a continuous map $h\from \sphere^{k-1} \to \Ucal$. Let
$\ell\in\N$ be such that $\lambda_{\ell+1}(\Om)>\alpha$. For every $i=1,\dots,k$ we define the
functions
\[
u_i:=\left(\frac{\lambda_{\ell+i}(\Om)-\alpha}{\lambda_{\ell+i}(\Om)-\lambda_i(\Om)}\right)^{1/2}\varphi_i
+\left(\frac{\alpha-\lambda_{i}(\Om)}{\lambda_{\ell+i}(\Om)-\lambda_i(\Om)}\right)^{1/2}\varphi_{\ell+i}.
\]
We obtain the following straightforward consequences:
\begin{enumerate}
 \item as $\lambda_i(\Om)<\alpha<\lambda_{\ell+i}(\Om)$, for every $i$, $u_i$ is well defined;
 \item $\int_\Omega u_i^2\,dx=1$, $\int_\Omega |\nabla u_i|^2\,dx=\alpha$;
 \item for every $j\neq i$ it holds $\int_\Omega u_iu_j\,dx=\int_\Omega \nabla u_i\cdot\nabla u_j
 \,dx=0$.
\end{enumerate}
Therefore the map $h\from \sphere^{k-1} \to \Ucal$ defined as
\[
h\from x=(x_1,\dots,x_k) \mapsto \sum_{i=1}^k x_iu_i
\]
has the required properties.
\end{proof}

Now we turn to the properties of the functional $f$. To start with, it satisfies
the Palais-Smale (P.S. for short) condition on $\overline{\mathcal{B}}_{\alpha}$; more precisely, the following
holds.
\begin{lemma}
\label{psball}
Every P.S. sequence $u_n$ for $f\big |_{\overline{\mathcal{B}}_{\alpha}}$ is a P.S.
sequence  for $f\big |_{\mathcal{U}_{\alpha}}$ and has a strongly convergent
subsequence in $\mathcal{U}_{\alpha}$.
\end{lemma}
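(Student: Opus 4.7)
The plan is to unravel the Lagrange-multiplier formulation of the P.S.\ condition on $\overline{\mathcal{B}}_{\alpha}$ and then exploit the $L^{p+1}$-compactness together with an elementary rigidity for $H^1_0$-functions. By the Lagrange condition for the (in)equality constraints defining $\overline{\mathcal{B}}_\alpha\subset\Mcal$, a P.S.\ sequence $\{u_n\}$ for $f|_{\overline{\mathcal{B}}_{\alpha}}$ comes with real multipliers $\lambda_n$ and $\mu_n\ge 0$, satisfying the complementary slackness $\mu_n(\alpha_n-\alpha)=0$ with $\alpha_n:=\int_\Om|\nabla u_n|^2\le\alpha$, such that
\[
(p+1)|u_n|^{p-1}u_n = \lambda_n u_n +\mu_n(-\Delta u_n)+o(1)\quad\text{in }H^{-1}(\Om).
\]
The sequence is bounded in $H^1_0$; up to a subsequence $u_n\rightharpoonup u$ weakly in $H^1_0$ and, by Rellich and $p+1<2^*$, strongly in $L^2$ and $L^{p+1}$, so $\|u\|_{L^2}=1$. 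Set $L:=\lim\alpha_n\in[\beta,\alpha]$ with $\beta:=\|\nabla u\|_{L^2}^2$.

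The key manoeuvre is to test the approximate equation separately against $u_n$ and against $u$. By strong $L^{p+1}$-convergence both right-hand sides share the common limit $C:=(p+1)\|u\|_{L^{p+1}}^{p+1}$, whence upon subtracting
\[
\mu_n\bigl(\alpha_n-\langle\nabla u_n,\nabla u\rangle_{L^2}\bigr)\to 0,\qquad\text{and hence}\qquad \mu_n(L-\beta)=o(1).
\]
So either $\mu_n\to 0$ along a subsequence, or $L=\beta$.

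Ruling out $\mu_n\to 0$ is the rigidity step: in that case $\lambda_n\to C$, and the limit equation $(p+1)|u|^{p-1}u=Cu$ in $H^{-1}(\Om)$ would force $|u|$ to be constant on $\{u\neq 0\}$. But any $v\in H^1_0(\Om)$ taking only finitely many values has $\nabla v=0$ a.e.\ (Stampacchia), is therefore constant on each connected component of $\Om$, and the vanishing boundary trace forces $v\equiv 0$ -- contradicting $\|u\|_{L^2}=1$. Consequently $L=\beta$; combined with weak convergence, this upgrades to strong convergence $u_n\to u$ in $H^1_0$ and $\alpha_n\to\beta$. The same rigidity now excludes $\beta<\alpha$: if so, then eventually $\alpha_n<\alpha$, slackness gives $\mu_n=0$ and the limit equation again forces $u\equiv 0$. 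Hence $L=\beta=\alpha$, $u\in\Ucal_\alpha$, and $\alpha_n=\alpha$ from some index on (otherwise a subsequence in the relative interior would again contradict the rigidity), so $\{u_n\}$ is a P.S.\ sequence for $f|_{\Ucal_\alpha}$ strongly convergent in $\Ucal_\alpha$.

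The main subtle point is precisely the rigidity argument above, which ensures that no P.S.\ sequence for $f|_{\overline{\mathcal{B}}_\alpha}$ can accumulate in the relative interior $\mathcal{B}_\alpha$: the nonlinearity $|u|^{p-1}u$ cannot be proportional to $u$ for any nonzero $u\in H^1_0(\Om)$, and this is the mechanism that both forces the multiplier $\mu_n$ to stay bounded away from zero and locates the limit on the boundary $\Ucal_\alpha$.
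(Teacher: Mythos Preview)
Your proof is correct and hinges on the same key idea as the paper's: the rigidity that $|u|^{p-1}u=cu$ admits no nontrivial solution in $H^1_0(\Omega)$. The paper organizes this differently---it first observes that a P.S.\ sequence lying in the relative interior $\mathcal{B}_\alpha$ carries only the $L^2$-multiplier, so its limit would satisfy precisely that impossible equation; hence any P.S.\ sequence must live on $\mathcal{U}_\alpha$, where the paper then shows the gradient-multiplier $l_n$ is bounded away from zero (otherwise one again reduces to the one-multiplier case), divides through by $l_n$, and concludes compactness directly. Your KKT/complementary-slackness formulation is a unified alternative that reaches the same endpoint, and the explicit use of Stampacchia makes the rigidity step more transparent than the paper's one-line ``hence $u$ is constant''.

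One small imprecision: after subtracting the two tests you write $\mu_n(\alpha_n-\langle\nabla u_n,\nabla u\rangle)\to 0$, but the subtraction actually gives
\[
\lambda_n\Bigl(1-\textstyle\int_\Omega u_n u\Bigr)+\mu_n\bigl(\alpha_n-\langle\nabla u_n,\nabla u\rangle\bigr)\to 0,
\]
and at this stage $\lambda_n$ is not yet known to be bounded (if $\mu_n\to\infty$ then $\lambda_n\to-\infty$). This is easily repaired by substituting $\lambda_n=C+o(1)-\mu_n\alpha_n$ from the first test, which yields $\mu_n\bigl[(L-\beta)+o(1)\bigr]=o(1)$ and hence your dichotomy.
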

\begin{proof}
We first show that there are no P.S. sequences in ${\mathcal{B}_{\alpha}}$. In fact, if $u_n$ is such a sequence, there is a sequence of real numbers $k_n$ such that
\begin{equation}
\label{ps}
\int_{\Om}|u_n|^{p-1}u_n\,v-k_n\int_{\Om}u_n\,v=o(1)\,\|v\|_{H^1_0}
\end{equation}
for every $v\in H^1_0(\Om)$. Since $u_n$ is bounded in $H^1_0(\Om)$, there is a subsequence (still denoted by $u_n$)
weakly convergent to $u\in H^1_0(\Om)$; moreover, $u_n$ converges strongly in $L^{p+1}(\Om)$ and in $L^2(\Om)$ to the same limit. By choosing $v=u_n$, we see that $k_n$ is bounded, so that we can also assume that $k_n\rightarrow k$. By taking the limit of \eqref{ps} for $n\to\infty$ we get
\begin{equation}
\nonumber
\int_{\Om}|u|^{p-1}u\,v=k\int_{\Om}u\,v
\end{equation}
for every $v\in H^1_0(\Om)$. Hence $u$ is constant, but this contradicts $u\in \Mcal$.

Now, if $u_n$ is a P.S. sequence for $f$ on ${\mathcal{U}}_{\alpha}$, there are sequences of real numbers $k_n$, $l_n$ such that
\begin{equation}
\label{ps1}
\int_{\Om}|u_n|^{p-1}u_n\,v-k_n\int_{\Om}u_n\,v-l_n\int_{\Om}\nabla u_n\,\nabla v=o(1)\,\|v\|_{H^1_0}.
\end{equation}
It is readily seen that $l_n$ is bounded away from zero, otherwise \eqref{ps1} is equivalent to \eqref{ps} (for some subsequence) and we still reach a contradiction. Then, we can divide both sides by $l_n$ and find that
there are sequences $\{\lambda_n\}_n$, $\{\mu_n\}_n$, with $\mu_n$ bounded, such that
\begin{equation}
\nonumber
\int_{\Om}\nabla u_n\,\nabla v+\lambda_n\int_{\Om}u_n\,v-\mu_n\int_{\Om}|u_n|^{p-1}u_n\,v=o(1)\,\|v\|_{H^1_0}.
\end{equation}
Now, by reasoning as before one finds that
also the sequence $\{\lambda_n\}_n$ is bounded, so that by the relation
$$-\Delta u_n+\lambda_n u_n-\mu_n |u_n|^{p-1}u_n=o(1)\quad \mathrm{in}\,\, H^{-1}(\Omega)$$
and  by the compactness of the embedding $H^1_0(\Omega)\hookrightarrow L^{p+1}(\Omega)$, the P.S. condition holds for the functional $f\big |_{{\mathcal{U}}_{\alpha}}$.
\end{proof}
We can combine the previous lemmas to prove one of the main results stated in the introduction.
\begin{proof}[Proof of Theorem \ref{thm:genus_2constr}]
Lemma \ref{psball} allows to apply standard variational methods (see e.g.
\cite[Thm. II.5.7]{St_2008}). We deduce that
$M_{\alpha,\,k}$ is achieved
at some critical point $u$ of $f\big |_{\mathcal{U}_{\alpha}}$.
This amounts to say that $u$ satisfies \eqref{lagreq}
for some real $\lambda$ and $\mu\neq 0$. We claim that there exists at least one
$u\in f^{-1}(M_{\alpha,\,k})\cap \mathcal{U}_{\alpha}$ such that \eqref{lagreq} holds with $\mu>0$.
Assume by contradiction that for \emph{every} critical point of $f\big |_{\mathcal{U}_{\alpha}}$ at level $M_{\alpha,k}$ it holds $\mu< 0$ in equation \eqref{lagreq}.

Let us define the functional $T:\,H^1_0(\Omega)\to \R$ as
$$T(u)=\frac{1}{2}\int_{\Om}|\nabla u|^2.$$
By denoting with $D$ the Fr\'{e}chet derivative and by $<\,,\,>$ the pairing between $H_0^1$ and its dual $H^{-1}$, our assumption can be restated as follows:

\noindent if there are $u\in f^{-1}(M_{\alpha,\,k})\cap\mathcal{U}_{\alpha}$ and $\mu\neq 0$  such that
\begin{equation}
\label{lagreq1}
\langle DT(u),\phi\rangle=\mu\langle Df(u),\phi\rangle
\end{equation}
for every $\phi\in H^1_0(\Om)$ satisfying $\int_{\Om}\phi u=0$ (that is for every $\phi$ tangent
to $\Mcal$ at $u$) then $\mu<0$.

We stress that both $DT(u)$ and $Df(u)$ in the above equation are bounded away from zero, since there are no Dirichlet eigenfunctions in $\mathcal{U}_{\alpha}$ nor critical points of $f$ on $\Mcal$.
Hence, by denoting with $\nabla_{T\Mcal}$ the gradient of a functional (in $H^1_0$) in the direction tangent to $\Mcal$, if $u\in f^{-1}(M_{\alpha,\,k})\cap \mathcal{U}_{\alpha}$ then $\nabla_{T\Mcal}T(u)$ and
$\nabla_{T\Mcal}f(u)$ \emph{are either  opposite or not parallel}. Moreover, the angle between these (non vanishing) vectors is \emph{bounded away from zero}; otherwise, we would find sequences $u_n\in \mathcal{U}_{\alpha}$, $\mu_n>0$ such that
\begin{equation}
\label{noparal}
(\nabla_{T\Mcal}T(u_n),v)_{H^1_0}-\mu_n(\nabla_{T\Mcal}f(u_n),v)_{H^1_0}=o(1)\|v\|_{H^1_0}
\end{equation}
for every $v\in H^1_0(\Om)$; but since
$$(\nabla_{T\Mcal}T(u_n),v)_{H^1_0}=\int_{\Om}\nabla u_n\,\nabla v-\lambda_n^T\int_{\Om}u_n\, v\,,$$
$$(\nabla_{T\Mcal}f(u_n),v)_{H^1_0}=\int_{\Om}|u_n|^{p-1}u_n\,v-\lambda_n^f\int_{\Om}u_n\, v\,,$$
for suitable bounded sequences $\lambda_n^T$, $\lambda_n^f$, this is equivalent to saying that $u_n$ is a P.S. sequence for
$f\big |_{\mathcal{U}_{\alpha}}$, so that, by Lemma \ref{psball}, we would get a constrained critical point with $\mu>0$.

Then, by choosing suitable linear combinations of the above tangential components
one can define a bounded $\mathcal{C}^1$ map $u\mapsto v(u)\in H_0^1(\Om)$, with $v(u)$ tangent to $\Mcal$
and satisfying the following property: there is $\delta>0$ such that
\begin{equation}
\label{diseqv}
\int_{\Om}\nabla u\,\nabla v(u)< -\delta\, ,\quad\quad \int_{\Om}|u|^{p-1}u\,v(u)>\delta\,,
\end{equation}
for every $u\in f^{-1}(M_{\alpha,\,k})\cap \mathcal{U}_{\alpha}$. By continuity and possibly by decreasing $\delta$, inequalities \eqref{diseqv} extend to
\begin{equation}
\label{diseqv1}
f^{-1}(M_{\alpha,\,k}-\bar\eps, M_{\alpha,\,k}+\bar\eps)\cap \big (\overline{\mathcal{B}}_{\alpha}
\backslash \overline{\mathcal{B}}_{\alpha-\tau}\big )
\end{equation}
for small enough, positive $\bar\eps$ and $\tau$. Finally, since there are no critical points of $f$ in
${\mathcal{B}}_{\alpha}$ we can take that the \emph{second of \eqref{diseqv} holds on}
\begin{equation}
\label{diseqv2}
f^{-1}(M_{\alpha,\,k}- \bar\eps, M_{\alpha,\,k}+\bar\eps)\cap \overline{\mathcal{B}}_{\alpha}.
\end{equation}
 Let $\varphi$ be a $\mathcal{C}^1$ function on $\R$ such that:
$$0\le\varphi\le 1, \quad\varphi\equiv 1\,\, \mathrm{in}\,\,(M_{\alpha,\,k}-\bar\eps/2, M_{\alpha,\,k}+\bar\eps/2),\quad \varphi\equiv 0\,\, \mathrm{in}\,\,\R\backslash (M_{\alpha,\,k}-\bar\eps, M_{\alpha,\,k}+\bar\eps),$$
and define
\begin{equation}
\label{vectfield}
e(u)=\varphi(f(u))\,v(u).
\end{equation}
Clearly,  $e$ is a $\mathcal{C}^1$ vector field on $\Mcal$ and is uniformly bounded, so that there exists a global solution $\Phi(u,t)$ of the initial value problem
$$\partial_t\Phi(u,t)=e\big (\Phi(u,t)),\quad\quad \Phi(u,0)=0.$$
By definition \eqref{vectfield} and by the first of \eqref{diseqv} (on \eqref{diseqv1}) we get
$\Phi(u,t_0)  \in \overline{\mathcal{B}}_{\alpha}$ for $t_0> 0$ and for any $u\in \overline{\mathcal{B}}_{\alpha}$; moreover, by the second inequality of \eqref{diseqv} (on \eqref{diseqv2}) there exists $\eps\in (0,\bar\eps)$ such that
$$f(\Phi(u,t_0))>M_{\alpha,\,k}+\eps$$ for every $u\in f^{-1}(M_{\alpha,\,k}-\eps, +\infty)\cap \overline{\mathcal{B}}_{\alpha}$.

Now, by \eqref{maxmin}, there is $A_{\eps}\subset \overline{\mathcal{B}}_{\alpha}$ such that $\gamma(A_{\eps})\ge k$ and
$$\inf_{u\in A_{\eps}} f(u)\ge M_{\alpha,\,k}-\eps.$$
Hence, $\gamma\big (\Phi(A_{\eps},t_0) \big )\ge k$ and
$$\inf_{u\in \Phi(A_{\eps},t_0)} f(u)\ge M_{\alpha,\,k}+\eps$$
contradicting the definition of $M_{\alpha,\,k}$.
\end{proof}
\begin{remark}
\label{rem1}
If $\mu>0$, by testing \eqref{lagreq} with $u$ and by integration by parts we readily get
$\lambda>-\alpha$. An alternative lower bound, independent of $\alpha$, could be obtained
by adapting arguments from \cite{MR968487,MR954951,MR991264} in order to prove
that the Morse index of $u$ (as a solution of \eqref{lagreq}) is less or equal than $k$.
Then Lemma \ref{lem:lambda_bdd_below} would provide $\lambda\geq-\lambda_{k}$.
%for $\lambda$ can be obtained as follows: let $u^+$ be the positive part of $u$ and denote by $\Om^+\subseteq \Om$ any nodal domain of $u$ contained in supp $u^+$. If $\varphi_1^+$ denotes the first eigenfunction of $-\Delta$ on $\Om^+$ and $\lambda_1^+$ is the corresponding eigenvalue, by multiplying both sides of \eqref{lagreq} by  $\varphi_1^+$  and by integrating on $\Om^+$  we get
%$$(\lambda_1^+ + \lambda)\int_{\Om^+}u\,\varphi_1^+=\mu\int_{\Om^+}|u|^{p-1}u\,\varphi_1^+$$
%From the positivity of both sides, we get $\lambda>-\lambda_1^+$. A similar bound can be obtained by testing \eqref{lagreq}
%multiplied by the first eigenfunction of $-\Delta$ on a nodal domain  $\Om^-$ contained in supp $u^-$.
\end{remark}
\begin{remark}\label{rem:MvsCNp}
By the Gagliardo-Nirenberg inequality \eqref{sobest} we readily obtain that, for every $k\geq1$,
\[
M_{\alpha,k}\leq C_{N,p} \alpha^{N(p-1)/4}.
\]
Taking into account the previous remark, this agrees with Remark \ref{limGN}.
\end{remark}
We conclude this section with the following estimate.
\begin{lemma}
\label{lem:M3vsM1}
Under the assumptions and notation of Theorem \ref{thm:genus_2constr},
\[
M_{\alpha,3} \leq 2^{-(p-1)/2} M_{\alpha,1}.
\]
\end{lemma}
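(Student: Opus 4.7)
The plan is to exploit the genus condition $\gamma(A)\ge 3$ via a Borsuk--Ulam argument in order to locate an element $u^*\in A$ whose positive and negative parts are balanced both in $L^2$ and in $H^1_0$. Once such a $u^*$ is in hand, rescaling each of $u^{*+}$ and $u^{*-}$ by the factor $\sqrt{2}$ places them in $\overline{\mathcal{B}}_\alpha$, so each is controlled in $L^{p+1}$ by $M_{\alpha,1}$; summing the two estimates produces the claimed factor $2^{-(p-1)/2}$.

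Concretely, I would first introduce the auxiliary map $G\from H^1_0(\Om)\to\R^2$ given by
\[
G(u) = \bigl(\|u^+\|_{L^2(\Om)}^2 - \|u^-\|_{L^2(\Om)}^2,\ \|\nabla u^+\|_{L^2(\Om)}^2 - \|\nabla u^-\|_{L^2(\Om)}^2\bigr).
\]
By Stampacchia's theorem $u\mapsto u^\pm$ is Lipschitz on $H^1_0(\Om)$, so $G$ is continuous; since $(-u)^\pm=u^\mp$, $G$ is also odd. Given $A\in\Sigma^{(3)}_\alpha$, the definition of $\gamma$ provides a continuous odd $h\from\sphere^2\to A$, hence a continuous odd $G\circ h\from\sphere^2\to\R^2$, and by the classical Borsuk--Ulam theorem there exists $x\in\sphere^2$ with $G(h(x))=0$. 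Setting $u^*:=h(x)\in A$, the identities $G(u^*)=0$ combined with $\|u^*\|_{L^2}^2=1$ and $\|\nabla u^*\|_{L^2}^2\le\alpha$ force
\[
\|u^{*\pm}\|_{L^2}^2 = \tfrac12, \qquad \|\nabla u^{*\pm}\|_{L^2}^2 \le \tfrac{\alpha}{2}.
\]

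Next, I would set $v^\pm := \sqrt{2}\,u^{*\pm}$; by the previous step these belong to $\overline{\mathcal{B}}_\alpha$, and each symmetric pair $\{\pm v^+\}$, $\{\pm v^-\}$ is a closed symmetric subset of $\overline{\mathcal{B}}_\alpha$ of genus one, whence by definition of $M_{\alpha,1}$ one has $\int_\Om(v^\pm)^{p+1}\,dx\le M_{\alpha,1}$. Since $u^{*+}$ and $u^{*-}$ have disjoint supports,
\[
\int_\Om|u^*|^{p+1}\,dx = 2^{-(p+1)/2}\Bigl(\int_\Om(v^+)^{p+1}\,dx + \int_\Om(v^-)^{p+1}\,dx\Bigr) \le 2^{-(p-1)/2}M_{\alpha,1},
\]
and then taking the infimum over $u\in A$ and the supremum over $A\in\Sigma^{(3)}_\alpha$ would conclude the proof.

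The only real obstacle is verifying that Borsuk--Ulam applies in this setting, which is immediate from the paper's definition of $\gamma$ and the two-dimensional target of $G$. The essential design choice is precisely that $G$ is $\R^2$-valued: the simultaneous balancing of $L^2$-mass and Dirichlet energy is what allows a single scaling factor $\sqrt{2}$ to move both $v^\pm$ into $\overline{\mathcal{B}}_\alpha$, and it is exactly this double balancing that requires genus $3$ rather than genus $2$.
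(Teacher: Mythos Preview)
Your argument is correct and follows the same overall strategy as the paper (Borsuk--Ulam via the genus condition, then rescale a ``half'' of the balanced element into $\overline{\mathcal{B}}_\alpha$), but the balancing map is different. The paper uses
\[
u\mapsto\Bigl(\int_\Omega |u|u,\ \int_\Omega |u|^{p}u\Bigr),
\]
so it balances the $L^2$ and $L^{p+1}$ masses of $u^\pm$; only \emph{one} of $u^+,u^-$ then has Dirichlet energy $\le\alpha/2$, but because the $L^{p+1}$ integrals of $u^+$ and $u^-$ are equal, a single rescaled part suffices to reach the bound. You instead balance the $L^2$ mass and the Dirichlet energy, so \emph{both} rescaled parts lie in $\overline{\mathcal{B}}_\alpha$ and you sum the two estimates. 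Either choice yields the same factor $2^{-(p-1)/2}$. The paper's version has the mild advantage that its map is manifestly continuous using only Nemytskii-type continuity in $L^q$; your map requires continuity of $u\mapsto u^+$ from $H^1_0$ to $H^1_0$, which is true (Marcus--Mizel) but is \emph{not} Lipschitz---your appeal to ``Stampacchia's theorem'' should be replaced by the standard continuity result for truncations in $W^{1,p}$. With that one-word correction your proof stands.
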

\begin{proof}
Let $A\in \Sigma^{(3)}_{\alpha}$, according to Definition \ref{def:genus}. Notice
that the map
\[
A\ni u \mapsto \left(\int_\Omega |u|u , \int_\Omega |u|^p u\right)\in\R^2
\]
is continuous and equivariant. By the Borsuk-Ulam Theorem, we deduce the existence of
$u_a\in A$ such that
\[
\int_\Omega |u^+_a|^2 = \int_\Omega |u^-_a|^2 = \frac12,\qquad
\int_\Omega |u^+_a|^{p+1} = \int_\Omega |u^-_a|^{p+1} =
\frac12 \int_\Omega |u_a|^{p+1},
\]
while
\[
\text{either }\int_\Omega |\nabla u^+_a|^2 \leq \frac{\alpha}{2}
\qquad
\text{or }\int_\Omega |\nabla u^-_a|^2 \leq \frac{\alpha}{2}.
\]
For concreteness let us assume that the first alternative holds;
as a consequence, we obtain that $v:=\sqrt2 u_a^+$ belongs to
$\overline{\mathcal{B}}_\alpha$. This yields
\[
M_{\alpha,1}\geq \int_\Omega |v|^{p+1} = 2^{(p+1)/2} \int_\Omega |u_a^+|^{p+1}
= \frac{2^{(p+1)/2}}{2} \int_\Omega |u_a|^{p+1} \geq 2^{(p-1)/2}
\inf_{u\in A} \int_\Omega |u|^{p+1},
\]
and since $A\in \Sigma^{(3)}_{\alpha}$ is arbitrary the proposition follows.
\end{proof}

\section{Min-max principles on the unit sphere in
\texorpdfstring{$L^2$}{L\texttwosuperior}}\label{sec:1const}

According to equation \eqref{Emu}, let $\Mcal\subset H^1_0(\Omega)$ denote the unit
sphere with respect to the $L^2$ norm and $\mathcal{E}_{\mu}$ the energy functional
associated to \eqref{eq:main_prob_u}. In this section we are concerned with critical
points of $\mathcal{E}_{\mu}$ on $\Mcal$ (which, in turn, correspond to solutions of our
starting problem \eqref{eq:main_prob_U}).

By the Gagliardo-Nirenberg inequality \eqref{sobest},
setting $\|\nabla u\|^2_{L^2}=\alpha$, one obtains
\begin{equation}\label{eq:boundonboundEmu}
\frac12\,\alpha- \mu\frac{C_{N,p}}{p+1}\,\alpha^{N(p-1)/4}
\leq \mathcal{E}_{\mu}(u)\le \frac12\alpha.
\end{equation}
In particular, $\mathcal{E}_{\mu}$ is bounded on any bounded subset of $\Mcal$,
and it is bounded from below (and coercive) on the entire $\Mcal$ for {subcritical} $p<1+4/N$
and for {critical} $p=1+4/N$ whenever $\mu< \frac{p+1}{2}C_{N,p}^{-1}$ .
In these cases, one can easily show that $\mathcal{E}_{\mu}$ satifies the P.S. condition
and apply the classical {minimax principle for even functionals} on a closed symmetric
submanifold (see e.g. \cite[Thm. II.5.7]{St_2008}).

In the complementary case, when $p$ is either supercritical, i.e. $p>1+4/N$, or
critical and $\mu$ is large, then $\mathcal{E}_{\mu}$ is not bounded from below (see e.g.
\eqref{minusinfty} below). In order to provide a minimax principle suitable for this case,
we recall the Definition \ref{def:genus} of genus and
that of $\mathcal{B}_\alpha$ (see equation \eqref{eq:defBU}). Furthermore, we denote with $K_{c}$ the (closed and symmetric) set of critical points of
$\Ecal_\mu$ at level $c$ contained in $\mathcal{B}_\alpha$. The following theorem is an adaptation of well known arguments
of previous critical point theorems relying on index theory.
\begin{theorem}
\label{infsupteo}
Let $k\ge1$, $\alpha>\lambda_k(\Omega)$, $\mu>0$ and $\tau>0$ be fixed, and let $c_k$ be defined as
in Theorem \ref{thm:genus_1constr}, equation \eqref{infsuplev}. If
\begin{equation}
\label{ass2}
 c_k < \hat c_k:= \inf_{\substack{A\in\Sigma^{(k)}_{\alpha}\\
A\setminus\mathcal{B}_{\alpha-\tau}\neq\emptyset }}
\sup_{A\setminus\mathcal{B}_{\alpha-\tau}}\Ecal_\mu,
\end{equation}
then $K_{c_k}\neq\emptyset$, and it contains a critical point of Morse index less or equal to $k$.
\end{theorem}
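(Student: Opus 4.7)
The plan is the classical minimax scheme combining Palais-Smale compactness, a pseudo-gradient deformation, and index theory; the specific role of the gap hypothesis $c_k < \hat c_k$ is to keep the deformation confined inside $\overline{\mathcal{B}}_\alpha$, and the particular notion of genus $\gamma$ adopted here is what yields the Morse index upper bound.

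First I would check the (PS) condition on sublevel sets contained in $\overline{\mathcal{B}}_\alpha$. A sequence $\{u_n\}\subset\mathcal{M}\cap\overline{\mathcal{B}}_\alpha$ with $\Ecal_\mu(u_n)\to c_k$ and constrained differential vanishing is automatically bounded in $H^1_0(\Omega)$; the Lagrange multiplier $\lambda_n$ is bounded because $\|u_n\|_{L^2}=1$ and $\int_\Omega|u_n|^{p+1}$ stays bounded (via \eqref{sobest} with $\|\nabla u_n\|_{L^2}^2\le\alpha$); then the compact embedding $H^1_0\hookrightarrow L^{p+1}$ yields strong convergence of a subsequence exactly as in the proof of Lemma \ref{psball}.

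Next, assume by contradiction that $K_{c_k}=\emptyset$. By (PS), pick $\bar\eps>0$ so small that $c_k+\bar\eps<\hat c_k$ and such that $\|\Ecal_\mu'|_{\mathcal{M}}\|\geq 2\delta>0$ throughout $\{|\Ecal_\mu-c_k|\le\bar\eps\}\cap\overline{\mathcal{B}}_\alpha$. Build an odd, locally Lipschitz pseudo-gradient field $W$ on $\mathcal{M}$ with $\|W\|_{H^1_0}\le 1$ and $\langle\Ecal_\mu'|_{\mathcal{M}}(u),W(u)\rangle\ge\delta$ on that annular region, cut off to vanish outside $\{|\Ecal_\mu-c_k|\le\bar\eps\}$. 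Let $\eta(\cdot,t)$ denote the negative flow of $W$ on $\mathcal{M}$; for each $t$ it is an odd homeomorphism. Pick $\eps\in(0,\bar\eps)$ and $A\in\Sigma_\alpha^{(k)}$ with $\sup_A\Ecal_\mu<c_k+\eps$. The choice $c_k+\eps<\hat c_k$ forces $A\subset\overline{\mathcal{B}}_{\alpha-\tau}$, otherwise $\sup_{A\setminus\mathcal{B}_{\alpha-\tau}}\Ecal_\mu\ge\hat c_k>c_k+\eps$, a contradiction. For $t_0>0$ small enough, the displacement in $H^1_0$-norm along the flow is controlled (since $\|W\|_{H^1_0}\le1$), so $\eta(A,t_0)\subset\overline{\mathcal{B}}_\alpha$. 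Oddness and continuity of $\eta(\cdot,t_0)$ give $\gamma(\eta(A,t_0))\ge\gamma(A)\ge k$ by direct composition in Definition \ref{def:genus}, hence $\eta(A,t_0)\in\Sigma_\alpha^{(k)}$. By the standard pseudo-gradient estimate one then has $\sup_{\eta(A,t_0)}\Ecal_\mu<c_k-\eps$, contradicting $c_k=\inf\sup$. Thus $K_{c_k}\neq\emptyset$.

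The genuinely subtle step, and the main obstacle, is the Morse-index upper bound. Following the references \cite{MR968487,MR954951,MR991264}, one argues contrapositively: suppose every $u\in K_{c_k}$ has $m(u)\ge k+1$, so that near each such $u$ the Hessian of the constrained functional admits a $(k+1)$-dimensional negative eigenspace. Using a partition of unity adapted to $K_{c_k}$, one can then modify the pseudo-gradient field $W$ so as to incorporate an odd retraction along these negative directions, producing a deformation $\tilde\eta(\cdot,t_0)$ which is odd, maps $\mathcal{M}$ into itself, stays inside $\overline{\mathcal{B}}_\alpha$ (again thanks to the $\tau$-margin provided by $c_k<\hat c_k$), and has the additional property that it strictly decreases $\gamma$ of any set $A\in\Sigma_\alpha^{(k)}$ at the minimax level. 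Concretely, the $(k+1)$-dimensional negative bundle provides enough room to move any continuous odd image of $\sphere^{k-1}$ off $K_{c_k}$, yielding $\gamma(\tilde\eta(A,t_0))\le k-1$, which combined with the energy decrease contradicts the definition of $c_k$. The delicate point is to glue these local Morse-theoretic retractions with the global pseudo-gradient field in an odd and constraint-preserving way; this is exactly the content of the equivariant deformation lemma of the cited papers, which applies once the confinement inside $\overline{\mathcal{B}}_\alpha$ has been secured by the hypothesis $c_k<\hat c_k$.
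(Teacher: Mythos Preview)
Your proposal is correct and follows essentially the same strategy as the paper's own proof: a deformation argument on $\mathcal{M}$, combined with the gap hypothesis \eqref{ass2} to guarantee that near-optimal sets $A\in\Sigma^{(k)}_\alpha$ actually lie in $\overline{\mathcal{B}}_{\alpha-\tau}$, so that the deformed set remains in $\overline{\mathcal{B}}_\alpha$; the Morse index upper bound is then obtained by invoking the results of \cite{MR968487,MR954951,MR991264}, exactly as the paper does.

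The only notable difference is one of presentation. The paper packages the deformation step into a single ``suitably modified Deformation Lemma'' producing an equivariant homeomorphism $\eta$ with $\eta(\mathcal{M}_{c_k+\delta}\cap\mathcal{B}_{\alpha-\tau})\subset\mathcal{M}_{c_k-\delta}\cap\mathcal{B}_\alpha$, whereas you construct the flow explicitly and control the confinement via the $H^1_0$-displacement bound $\|W\|_{H^1_0}\le 1$. Your version requires the (implicit) compatibility $2\eps/\delta$ small compared to $\sqrt{\alpha}-\sqrt{\alpha-\tau}$, which is harmless since $\eps$ is free; it would be cleaner to state this. On the Morse index part you actually sketch more of the mechanism (the $(k{+}1)$-dimensional negative bundle allowing one to push odd images of $\sphere^{k-1}$ off $K_{c_k}$) than the paper, which simply cites the references.
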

\begin{remark}
In case assumption \eqref{ass2} holds for $k,k+1,\dots,k+r$, and $c=c_k=...c_{k+r}$,
then it is standard to extend Theorem \ref{infsupteo} to obtain
\begin{equation}
\label{indexK}
\gamma(K_c)\ge r+1,
\end{equation}
so that $K_c$ contains infinitely many critical points.
\end{remark}
\begin{proof}[Proof of Theorem \ref{infsupteo}]
For any $a\in\R$ we denote by $\Mcal_a$ the sublevel set $\{\mathcal{E}_\mu<a\}$.
First of all we notice that both $c_k$ and $\hat c_k$ are well defined and finite,
by Lemma \ref{lemma:genusbigger} and equation \eqref{eq:boundonboundEmu}.
Suppose now by contradiction that $K_{c_k}=\emptyset$. By a suitably modified version of
the Deformation Lemma (recall that $\Ecal_\mu$ satisfies the P.S. condition on $\Mcal$),
there exist $\delta>0$ and an equivariant homeomorphism
$\eta$ such that
$\eta(u)=u$ outside
$\mathcal{B}_{\alpha}\cap \Mcal_{c_k+2\delta}$ and
\begin{equation}
\label{lowlev}
\eta({\Mcal_{c_k+\delta}\cap \mathcal{B}_{\alpha-\tau}})\subset \Mcal_{c_k-\delta}\cap
\mathcal{B}_{\alpha}.
\end{equation}
By definition of $c_{k}$ there exists $A\in \Sigma^{(k)}_{\alpha}$ such that
$A\subset \Mcal_{c_k+\delta}$; it follows by assumption \eqref{ass2} (and by decreasing $
\delta$ if necessary) that $A\subset \Mcal_{c_k+\delta}\cap \mathcal{B}_{\alpha-\tau}$.
Then, since $\eta$ is an odd homeomorphism, $\eta(A)\in \Sigma^{(k)}_{\alpha}$
and, by definition, $\sup_{\eta(A)}\Ecal_\mu \ge c_k$, in contradiction with
\eqref{lowlev}. Finally, the estimate of the Morse index is a direct consequence of the
definition of genus we deal with: see \cite{MR968487}, Proposition on page 1030, or the discussion
at the end of Section 2 in \cite{MR991264}.
\end{proof}
We now provide a sufficient condition to guarantee the validity of assumption
\eqref{ass2}.
\begin{lemma}\label{lem:ckMak}
Let $k\ge1$, $\alpha>\lambda_k(\Omega)$ and $\mu>0$ satisfy
\begin{equation}
\label{muboundef}
0<\mu<\frac{p+1}{2}\,\frac{\alpha-\lambda_{k}(\Omega)}{M_{\alpha,k}-|\om|^{-\frac{p-1}{2}}}
\end{equation}
where $M_{\alpha,k}$ is defined in Theorem \ref{thm:genus_2constr}.
Then, for $\tau>0$ sufficiently small, \eqref{ass2} holds true.
\end{lemma}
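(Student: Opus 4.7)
The strategy is to bound $c_k$ from above and $\hat c_k$ from below, choosing each bound so that the condition \eqref{muboundef} is exactly the algebraic gap between the two, and then absorbing an error of order $\tau$ by taking $\tau$ small.

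\textbf{Upper bound on $c_k$.} First I would exhibit a concrete test set. The natural candidate is the $L^2$-unit sphere $V_k \cap \Mcal$, where $V_k = \spann\{\varphi_1,\dots,\varphi_k\}$. By the variational characterization of $\lambda_k(\Omega)$, $\|\nabla u\|^2 \leq \lambda_k(\Omega) < \alpha$ for every $u \in V_k \cap \Mcal$, so this set lies in $\overline{\mathcal{B}}_\alpha$; its genus equals $k$ by Borsuk--Ulam, hence it belongs to $\Sigma^{(k)}_\alpha$. Applying H\"older's inequality with $\|u\|_{L^2}=1$ yields $\|u\|_{p+1}^{p+1} \geq |\Omega|^{-(p-1)/2}$, so
\[
c_k \;\leq\; \sup_{V_k \cap \Mcal} \Ecal_\mu \;\leq\; \frac{\lambda_k(\Omega)}{2} - \frac{\mu}{p+1}\,|\Omega|^{-(p-1)/2} \;=:\; c^\star.
\]

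\textbf{Lower bound on $\hat c_k$.} For any admissible $A \in \Sigma^{(k)}_\alpha$ with $A \setminus \mathcal{B}_{\alpha-\tau} \neq \emptyset$, I aim to produce a point $u^\star \in A \setminus \mathcal{B}_{\alpha-\tau}$ with $\|u^\star\|_{p+1}^{p+1} \leq M_{\alpha,k} + o_\tau(1)$. Granted this, since $\|\nabla u^\star\|^2 \geq \alpha-\tau$,
\[
\sup_{A\setminus\mathcal{B}_{\alpha-\tau}} \Ecal_\mu \;\geq\; \Ecal_\mu(u^\star) \;\geq\; \frac{\alpha-\tau}{2} - \frac{\mu}{p+1}\bigl(M_{\alpha,k} + o_\tau(1)\bigr),
\]
and passing to the infimum over $A$,
\[
\hat c_k \;\geq\; \frac{\alpha}{2} - \frac{\mu}{p+1}\,M_{\alpha,k} - o_\tau(1) \;=:\; \hat c^\star - o_\tau(1).
\]
The existence of $u^\star$ rests on the max-min characterization $\inf_A \|u\|_{p+1}^{p+1} \leq M_{\alpha,k}$ from Theorem \ref{thm:genus_2constr}, which provides a near-minimizer inside $A$; combined with the fact (also used in the proof of Theorem \ref{thm:genus_2constr}) that $M_{\alpha,k}$ is achieved on $\Ucal_\alpha$, a suitable deformation / retraction of $A$ towards $\Ucal_\alpha$ along the tangential pseudo-gradient of $f(u)=\int|u|^{p+1}$ on $\Mcal$ allows one to relocate such a near-minimizer into the annular region $A \setminus \mathcal{B}_{\alpha-\tau}$ while preserving the genus.

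\textbf{Conclusion.} A direct rearrangement shows that \eqref{muboundef} is equivalent to $c^\star < \hat c^\star$. Consequently, picking $\tau>0$ small enough that $o_\tau(1) < \hat c^\star - c^\star$, the estimates above give
\[
c_k \;\leq\; c^\star \;<\; \hat c^\star - o_\tau(1) \;\leq\; \hat c_k,
\]
which is exactly \eqref{ass2}.

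\textbf{Main obstacle.} The delicate step is the lower bound on $\hat c_k$: a generic point in $A \setminus \mathcal{B}_{\alpha-\tau}$ may have $\|u\|_{p+1}^{p+1}$ much larger than $M_{\alpha,k}$ (indeed as large as the unconstrained maximum $M_{\alpha,1}$ when $A$ consists of a deep-interior piece of genus $k$ plus an extraneous peak near $\Ucal_\alpha$). Ensuring that a point with $\|u\|_{p+1}^{p+1}$ close to $M_{\alpha,k}$ actually survives inside the annular region requires invoking the pseudo-gradient deformation of $f|_{\mathcal{U}_\alpha}$ already developed for Theorem \ref{thm:genus_2constr}, so that the min-max value $M_{\alpha,k}$ is genuinely carried onto $\Ucal_\alpha$ by a genus-preserving flow.
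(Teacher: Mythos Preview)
Your overall strategy---an upper bound on $c_k$ via the eigenspace sphere $\{\sum_{i=1}^k x_i\varphi_i : x\in\sphere^{k-1}\}$ together with H\"older, and a lower bound on $\hat c_k$ via the max--min value $M_{\alpha,k}$---is exactly what the paper does, and your upper bound
\[
c_k \le \tfrac12\lambda_k(\Omega)-\tfrac{\mu}{p+1}|\Omega|^{-(p-1)/2}
\]
reproduces the paper's estimate verbatim. The difference lies in the lower bound on $\hat c_k$: the paper does it in one line. From $\inf_{u\in A}\int|u|^{p+1}\le M_{\alpha,k}$ (which is just the definition of $M_{\alpha,k}$) and the assumption $A\setminus\mathcal{B}_{\alpha-\tau}\neq\emptyset$, the paper simply ``infers''
\[
\sup_{A\setminus\mathcal{B}_{\alpha-\tau}}\Ecal_\mu \;\ge\; \tfrac12(\alpha-\tau)-\tfrac{\mu}{p+1}M_{\alpha,k},
\]
with no deformation, no pseudo-gradient, and no $o_\tau(1)$ correction.

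You are right to flag this step: the near-minimizer of $\int|u|^{p+1}$ over $A$ need not lie in the annular region $A\setminus\mathcal{B}_{\alpha-\tau}$, so the passage is not self-evident (indeed, one can adjoin to a set $\tilde A\subset\mathcal{B}_{\alpha-\tau}$ of genus $k$ a pair $\{\pm u_0\}$ with $u_0\in\Ucal_\alpha$ and $\|u_0\|_{p+1}^{p+1}$ close to $M_{\alpha,1}>M_{\alpha,k}$). However, your proposed remedy---a ``suitable deformation/retraction of $A$ towards $\Ucal_\alpha$ along the tangential pseudo-gradient of $f$''---is itself a gap rather than a fix. Nothing in the proof of Theorem~\ref{thm:genus_2constr} produces a genus-preserving flow that \emph{increases} $\|\nabla u\|^2$ while keeping $\int|u|^{p+1}$ under control; the tangential gradient of $f$ on $\Mcal$ has no reason to push points toward $\Ucal_\alpha$, nor to keep the deformed set inside $\overline{\mathcal{B}}_\alpha$ (and hence inside $\Sigma^{(k)}_\alpha$). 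So you have correctly located the delicate point in the argument, but your patch does not close it: you end up requiring essentially the same unjustified step as the paper, only dressed in more machinery.
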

\begin{proof}
We first estimate $c_k$ from above. To this aim, we construct a subset
$\tilde A\in \Sigma^{(k)}_{\alpha-\tau}$ (for any $\tau$ sufficiently small)
as
\begin{equation}
\label{Atilde}
\tilde A= \left\{\sum_{i=1}^k x_i\varphi_i : x=(x_1,\dots,x_k)\in\sphere^{k-1}\right\},
\end{equation}
where, as usual $\varphi_i$ denotes the Dirichlet eigenfunction associated to
$\lambda_i(\Omega)$. Indeed $\gamma(\tilde A)=k$ (it is homeomorphic to
$\sphere^{k-1}$), and $\max_{u\in\tilde A}\|u\|^2_{H^1_0}=\lambda_{k}(\Omega)<\alpha-\tau$ for
$\tau$ small. Hence Holder inequality yields
\begin{equation}
\label{boundabove1}
c_k \leq \sup_{\tilde A}\mathcal{E}_{\mu}\le \frac{1}{2}\lambda_{k}(\Omega)
- \frac{\mu}{p+1}\,|\om|^{-\frac{p-1}{2}}.
\end{equation}
On the other hand, let $A\in\Sigma^{(k)}_{\alpha}$. Theorem \ref{thm:genus_2constr}
implies
\[
\inf_{u\in A} \int_\Omega |u|^{p+1} \leq M_{\alpha,k}.
\]
If moreover $A\setminus\mathcal{B}_{\alpha-\tau}\neq\emptyset$ we infer
\[
\sup_{A\setminus\mathcal{B}_{\alpha-\tau}}\mathcal{E}_{\mu}\ge
\frac{1}{2}(\alpha - \tau)  - \frac{\mu}{p+1} M_{\alpha,k},
\]
and taking the infimum an analogous inequality holds true for $\hat c_k$. Comparing
with \eqref{boundabove1} the lemma follows.
\end{proof}
Exploiting the results above, we are ready to prove our main existence results.
\begin{proof}[End of the proof of Theorem \ref{thm:genus_1constr}]
By Theorem \ref{infsupteo} and Lemma \ref{lem:ckMak} the proof is completed by choosing
\[
\hat\mu_k:=\sup_{\alpha>\lambda_k(\Omega)} \frac{p+1}{2}\,\frac{\alpha-\lambda_{k}(\Omega)}{M_{\alpha,k}-|\om|^{-\frac{p-1}{2}}}.
\qedhere
\]
\end{proof}
\begin{proof}[Proof of Theorem \ref{thm:intro_GS}]
We write the proof in terms of $\Ecal_\mu$, the theorem following by the relations in
\eqref{eq:main_prob_u}. Recall that, for every $u\in \overline{\mathcal{B}}_\alpha$, $\gamma
\left(\{u,-u\}\right)=1$. We deduce that $c_1$ is actually a local
minimum for $\Ecal_\mu$, achieved by some $u$ which solves \eqref{eq:main_prob_u}
(for a suitable $\lambda$), and it can be chosen positive by symmetry.
Since
\[
\int_\Omega |\nabla u|^2 + \lambda u^2 - p\mu|u|^{p+1}\,dx=-(p-1)\int_\Omega \mu|u|^{p+1}\,dx<0,
\]
and $H^1_0(\Omega) = \spann\{u\}\oplus T_{u}\mathcal{M}$, we have that $u$
has Morse index $1$. In a standard way, the minimality
property of $u$ implies also orbital stability  of the associated solitary wave (see e.g.
\cite{MR677997}). Turning to the estimates for $\hat\mu_1 = \hat\rho_1^{(p-1)/2}$,
we can deduce it using Lemma \ref{lem:ckMak} and Remark \ref{rem:MvsCNp}, which yield
\[
\hat\mu_1\left(\Omega,p\right):=\sup_{\alpha>\lambda_1(\Omega)} \frac{p+1}{2}\,\frac{\alpha-\lambda_{1}(\Omega)}{C_{N,p} \alpha^{\frac{N(p-1)}{4}}-|\om|^{-\frac{p-1}{2}}}
\geq \frac{p+1}{2C_{N,p}}\,\sup_{\alpha>\lambda_1(\Omega)}\frac{\alpha-\lambda_{1}
(\Omega)}{\alpha^{\beta}},
\]
where $\beta:=N(p-1)/4$. Now, if $\beta\leq1$ we obtain the desired bound for the
subcritical and critical cases. On the other hand, when $\beta>1$, elementary
calculations show that
\[
\hat\mu_1\left(\Omega,p\right)\geq \frac{p+1}{2C_{N,p}}
\,\frac{(\beta-1)^{(\beta-1)}}{\beta^\beta}\, \lambda_1(\Omega)^{-(\beta-1)},
\]
and finally
\[
\hat\rho_1\left(\Omega,p\right)\geq \underbrace{\left[\frac{p+1}{2C_{N,p}}
\,\frac{(\beta-1)^{(\beta-1)}}{\beta^\beta}\right]^{\frac{2}{p-1}}}_{D_{N,p}}\,
\lambda_1(\Omega)^{\frac{2}{p-1}-\frac{N}{2}}.\qedhere
\]
\end{proof}
\begin{proof}[Proof of Proposition \ref{thm:intro_3>1}]
As usual, by \eqref{eq:main_prob_u}, we have to prove that
\[
\hat\mu_3\left(\Omega,p\right)\geq
2^{(p-1)/2} D_{N,p}\lambda_3(\Omega)^{\frac{2}{p-1}-\frac{N}{2}}.
\]
By Lemmas \ref{lem:ckMak}, \ref{lem:M3vsM1}, and Remark  \ref{rem:MvsCNp} we obtain
\[
\begin{split}
\hat\mu_3 &= \sup_{\alpha>\lambda_3(\Omega)} \frac{p+1}{2}\,\frac{\alpha-\lambda_{3}(\Omega)}{M_{\alpha,3}-|\om|^{-\frac{p-1}{2}}} \geq
\sup_{\alpha>\lambda_3(\Omega)} \frac{p+1}{2}\,\frac{\alpha-\lambda_{3}(\Omega)}{
2^{-(p-1)/2}M_{\alpha,1}-|\om|^{-\frac{p-1}{2}}}\\
&\geq 2^{(p-1)/2}\sup_{\alpha>\lambda_3(\Omega)} \frac{p+1}{2}\,\frac{\alpha-\lambda_{3}
(\Omega)}{C_{N,p}\alpha^\beta-2^{(p-1)/2}|\om|^{-\frac{p-1}{2}}},
\end{split}
\]
where $\beta:=N(p-1)/4$, and the desired result follows by arguing as in the proof of
Theorem \ref{thm:intro_GS}.
\end{proof}
To conclude this section we prove that in the supercritical case, if $\mu$ is not too
large, in addition to $(c_k)_k$ there is a further sequence of critical levels
$(\bar c_k)$ of $\mathcal{E}_{\mu}$ constrained to $\mathcal{M}$. For concreteness,
let us first consider the case $k=1$: since in such case $c_1$ is a local minimum of
$\Ecal_\mu$ in $\mathcal{M}$, and $\Ecal_\mu$ is unbounded from below in $\mathcal{M}$,
the critical level $\bar c_1$ is of mountain pass type.
\begin{proposition}
\label{mpcritlev}
Let $p>1+4/N$, $\mu<\hat\mu_1$, and $u_1$ denote the local minimum point of $\Ecal_\mu$
in $\mathcal{M}$, according to Theorems \ref{infsupteo} and \ref{thm:intro_GS}. The value
\[
\bar c_1 : =\inf_{\gamma\in \Gamma}\sup_{[0,1]}\mathcal{E}_{\mu}(\gamma(s)),
\quad\text{where }\Gamma:=\left\{\gamma\in C([0,1];\Mcal) : \gamma(0)=u_1,\,\gamma(1)<c_1-1\right\},
\]
is a critical level for $\Ecal_\mu$ in $\mathcal{M}$.
\end{proposition}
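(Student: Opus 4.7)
The plan is to verify the standard mountain pass hypotheses on the manifold $\Mcal$: (i) the class $\Gamma$ is non-empty, (ii) the mountain pass geometry $\bar c_1>c_1$ holds, and (iii) the Palais--Smale condition is satisfied at level $\bar c_1$. The last ingredient is the only serious obstacle in the $L^2$-supercritical setting.

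First, to see that $\Gamma\neq\emptyset$, I exploit the fact that $\Ecal_\mu$ is unbounded from below on $\Mcal$ when $p>1+4/N$. Fixing $\phi\in C^\infty_c(\R^N)$ with $\|\phi\|_{L^2}=1$ and $x_0\in\Omega$, the concentrated family $\phi_\eps(x)=\eps^{-N/2}\phi((x-x_0)/\eps)$ belongs to $\Mcal$ for $\eps$ small, and a direct computation gives $\Ecal_\mu(\phi_\eps)=\tfrac12\eps^{-2}\|\nabla\phi\|_{L^2}^2-\tfrac{\mu}{p+1}\eps^{-N(p-1)/2}\|\phi\|_{L^{p+1}}^{p+1}\to-\infty$ as $\eps\to0^+$, since $N(p-1)/2>2$. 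One then connects $u_1$ to such a $\phi_\eps$ by the radial projection on $\Mcal$ of the straight segment $(1-t)u_1+t\phi_\eps$, which is well defined because both endpoints are positive and hence not antipodal in $L^2$. For the geometry $\bar c_1>c_1$, since $u_1$ is a local minimum of $\Ecal_\mu$ on $\Mcal$ (its Morse index $1$ in $H^1_0$ is consumed by the normal direction to the constraint), we may assume that $u_1$ is isolated in $K_{c_1}\cap\Mcal$, as otherwise the proposition already follows. Standard compactness then yields $r,\delta>0$ with $\Ecal_\mu(v)\ge c_1+\delta$ for every $v\in\Mcal$ with $\|v-u_1\|_{H^1_0}=r$; any $\gamma\in\Gamma$ must cross this sphere, hence $\bar c_1\ge c_1+\delta$.

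The main difficulty lies in the Palais--Smale condition, since $\Ecal_\mu$ is unbounded below on $\Mcal$ and the direct a priori argument for $H^1_0$-boundedness fails in the supercritical regime. The strategy is to produce, via a Morse-theoretic refinement of the deformation lemma as in \cite{MR968487,MR954951,MR991264}, a PS sequence $(u_n)\subset\Mcal$ at level $\bar c_1$ satisfying $-\Delta u_n+\lambda_n u_n=\mu|u_n|^{p-1}u_n+o(1)$ in $H^{-1}$ together with the bound $m(u_n)\leq 2$ on the augmented Morse index. Assuming by contradiction that $\alpha_n:=\|\nabla u_n\|_{L^2}^2\to+\infty$, I would adapt the blow-up analysis of Section \ref{sec:blow-up} from exact solutions to PS sequences: at each rescaling step the extra $o(1)$ term in $H^{-1}$ is absorbed by the compact embedding $H^1_0(\Om)\hookrightarrow L^{p+1}(\Om)$, so that Lemmas \ref{localblow}--\ref{lemlim1} and ultimately Corollary \ref{limmass} apply with $\mu_n\equiv\mu$. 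In the supercritical case case 3 of Corollary \ref{limmass} forces $\mu_n\to 0$, contradicting the fact that $\mu>0$ is fixed. Hence $(u_n)$ is bounded in $H^1_0$, and the argument of Lemma \ref{psball} (which only uses $H^1_0$-boundedness and compactness) yields a subsequence converging strongly to a critical point of $\Ecal_\mu$ on $\Mcal$ at level $\bar c_1$. The hardest part is the PS verification, specifically the transfer of the blow-up analysis from exact solutions to approximate ones; all other ingredients are essentially standard.
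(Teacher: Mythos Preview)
The paper's proof is considerably shorter than yours. It establishes that $\Ecal_\mu$ is unbounded from below on $\Mcal$ via the same concentration construction you use, notes that $u_1$ is a local minimum providing the mountain pass geometry, and then simply \emph{asserts} that $\Ecal_\mu$ satisfies the Palais--Smale condition on $\Mcal$, whence the standard mountain pass theorem applies. No further justification of the Palais--Smale condition is offered; the paper treats it as known.

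Your approach departs from the paper precisely at this point: you identify the Palais--Smale condition as the main obstacle in the supercritical regime and attempt to establish it by producing a PS sequence with bounded augmented Morse index and then transferring the blow-up analysis of Section~\ref{sec:blow-up} to such a sequence, so as to reach a contradiction via Corollary~\ref{limmass}. This is a genuinely different, and much heavier, route than what the paper does.

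That said, the adaptation you propose has a real gap. The analysis in Section~\ref{sec:blow-up} is built on pointwise arguments for \emph{exact} solutions: one selects global and then local maxima $P_n^i$ of $|U_n|$, uses the sign of $-\Delta U_n(P_n)/U_n(P_n)$ at such points (see the proof of Lemma~\ref{localblow}), applies $C^1_{\loc}$ elliptic estimates to the rescaled functions, and obtains the exponential decay of Proposition~\ref{glob2} via the maximum principle for the linearized operator. A PS sequence only satisfies the equation up to an $o(1)$ remainder in $H^{-1}(\Omega)$, which does not yield the $C^2$ regularity these steps require; your remark that the remainder ``is absorbed by the compact embedding $H^1_0(\Om)\hookrightarrow L^{p+1}(\Om)$'' addresses the nonlinear term but not the loss of pointwise control. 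Similarly, Lemma~\ref{lem:lambda_bdd_below} uses the Morse index of an exact solution, and its extension to approximate critical points would need to be spelled out. As you yourself acknowledge at the end, the transfer of the blow-up machinery is the crux, and in its present form it is not justified. The paper sidesteps the whole issue by taking PS for granted.
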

\begin{proof}
Notice that, if $p>1+4/N$, then  $\mathcal{E}_{\mu}\to -\infty$ along some sequence in
$\Mcal$. Indeed, by defining
\begin{equation}
\label{concfun}
w_n(x): = \eta(x)Z_{N,p}\big ((x-x_0)/a_n\big )\quad \text{and}\quad
\tilde{w_n}:=\frac{w_n}{\| w_n\|^2_{L^2(\Omega)}}\,\in\, \Mcal,
\end{equation}
where $a_n\to 0^+$, $x_0\in \Omega$ and $\eta\in \mathcal{C}_0^{\infty}(\Omega)$, $\eta(x_0)=1$, we obtain
\begin{equation}\label{minusinfty}
\alpha_n :=\|\nabla \tilde w_n\|^2_{L^2(\Omega)}\to +\infty,
\qquad
\frac{\int_{\Omega} |\tilde w_n|^{p+1} \,dx}{\alpha_n^{N(p-1)/4}}\to C_{N,p},
\qquad
\mathcal{E}_{\mu}(\tilde w_n)\to -\infty
\end{equation}
for $n\to +\infty$. Since $u_1$ is a local minimum,
the functional $\mathcal{E}_{\mu}$ has a mountain pass structure on $\Mcal$;
by recalling that $\mathcal{E}_{\mu}$ satisfies the P.S. condition the
proposition follows.
\end{proof}
\begin{remark}\label{rem:further_crit_lev}
One can generalize Proposition \ref{mpcritlev} by constructing critical points via a saddle-point theorem in the following way: let us pick $k$ points $x_1, x_2,...,x_k$ in $\Omega$ and consider the corresponding function $\tilde w_i$; we may assume that
$\mathrm{supp}\,\tilde w_i\cap \mathrm{supp}\,\tilde w_j=\emptyset$ for $i\neq j$, so that these functions are orthogonal. Let us now define the subspace
$V_k=\mathrm{span}\{\varphi_1,\dots,\varphi_k;\tilde w_1,...,\tilde w_k\}$; note that dim $V_k=2k$. Let $R$ be an operator (in $L^2(\om)$) such that $R=I$ on $V_k^{\perp}$, $Ru_i=\tilde w_i$, $i=1,2,..,k$. Possibly after permutations, we can choose $R$ such that $R\big |_{V_k}\in SO(2k)$ (actually, there are infinitely many different choices of $R$). Now, since $SO(2k)$ is (arcwise) connected, there is a continuous path
$\tilde{\gamma}:\,[0,1]\rightarrow SO(2k)$ such that $\gamma(0)=I$, $\gamma(1)=R\big |_{V_k}$.
Then, we can define the following map
\[
\gamma:\, [0,1] \times S^{k-1}\rightarrow \Mcal,
\quad\quad \gamma(s;t_1,....,t_k)=\sum_{i=1}^{k}t_i\tilde{\gamma}(s)u_i,\quad
\]
where $\sum_{i=1}^{k}t_i^2=1$. It is clear that $\gamma$ is continuous; moreover,
\[
\gamma(0;t_1,....,t_k)\in \mathrm{span }\{\varphi_1,\dots,\varphi_k\}\cap \Mcal
\quad \mathrm{and}\quad
\gamma(1;t_1,....,t_k)\in \mathrm{span }\{\tilde w_1,\dots,\tilde w_k\}\cap \Mcal.
\]
Then, by denoting with $\Gamma_k$ the set of the above paths, if $\mu$ is sufficiently
small we obtain the critical levels
\[
\bar c_k : =\inf_{\gamma\in \Gamma_k}\sup_{[0,1]\times S^{k-1}}\mathcal{E}_{\mu}(\gamma(s;t_1,....,t_k)).
\]
\end{remark}

\section{Results in symmetric domains}\label{sec:symm}

This section is devoted to the proof of Theorem \ref{pro:symm}, therefore we assume $1+4/N \leq p < 2^*-1$.
We perform the proof in the case of $\Omega=B$, but it will be clear that the main assumption on $\Omega$ is the following:
\begin{itemize}
 \item[\textbf{(T)}] there is a tiling of $\Omega$, made by $h$ copies of a subdomain $D$, in such a way that from
 any solution $U_D$ of \eqref{eq:main_prob_U} on $D$ one can construct, using reflections, a solution $U_\Omega$ of  \eqref{eq:main_prob_U} on $\Omega$.
\end{itemize}
Then $U_\Omega$ has $h$ times the mass of $U_D$, and recalling Theorem \ref{thm:intro_GS} we deduce that \eqref{eq:main_prob_U} on $\Omega$ is solvable for any
$\rho< h \cdot D_{N,p} \lambda_1(D)^{\frac{2}{p-1}-\frac{N}{2}}$. At this point, for a sequence $(D_k,h_k)_k$ of tilings satisfying  \textbf{(T)}, we
obtain the solvability of \eqref{eq:main_prob_U} on $\Omega$ whenever
\[
\rho< h_k \cdot D_{N,p} \lambda_1(D_k)^{\frac{2}{p-1}-\frac{N}{2}},
\]
and if we can show that
\begin{equation}\label{eq:finaltarget}
\frac{ h_k }{ \lambda_1(D_k)^{\frac{N}{2}-\frac{2}{p-1}}} \to +\infty\qquad\text{as }k\to+\infty,
\end{equation}
we deduce the solvability of \eqref{eq:main_prob_U} on $\Omega$ for every mass. Having this scheme in mind, it is easy to prove analogous results
on rectangles and also in other kind of domains.

Then let $B\subset\R^N$ be the ball (w.l.o.g. of radius one), and let
\[
D_k:=\left\{(r\cos\theta, r\sin\theta,x_3,\dots,x_N)\in B: -  \frac{\pi}{k} < \theta < \frac{\pi}{k}\right\}
\]
Then $D_k$ satisfies \textbf{(T)}, with $h_k=k$. In order to estimate $\lambda_1(D_k)$ we observe that, by elementary trigonometry,
\[
B'_k = B_{\frac{\sin(\pi/k)}{\sin(\pi/k)+1}}\left(\frac{1}{\sin(\pi/k)+1},0,0,\dots,0\right) \subset D_k,
\]
and therefore
\[
\lambda_1(D_k) \le \lambda_1(B'_k) \le C k^2,
\]
for some dimensional constant $C=C(N)$ and $k$ large. Then
\[
\frac{ h_k }{ \lambda_1(D_k)^{\frac{N}{2}-\frac{2}{p-1}}}\ge  C \frac{k}{k^{{N}-\frac{4}{p-1}}} = C k^{1-{N}+\frac{4}{p-1}} = C k^{\frac{N-1}{p-1}\left[1+\frac{4}{N-1} - p\right]},
\]
and finally \eqref{eq:finaltarget} holds true whenever $p< 1+\frac{4}{N-1}$, thus completing the proof of Theorem \ref{pro:symm}.

\small

\subsection*{Acknowledgments}
We would like to thank Jacopo Bellazzini, who pointed out that the results in
\cite{MR3318740}, in the supercritical case, can be read in terms of a local minimization. We
would also like to thank Benedetta Noris, who read a preliminary version of this manuscript.
This work is partially supported  by the PRIN-2012-74FYK7 Grant:
``Variational and perturbative aspects of nonlinear differential problems'',
by the ERC Advanced Grant  2013 n. 339958:
``Complex Patterns for Strongly Interacting Dynamical Systems - COMPAT'',
and by the INDAM-GNAMPA group.

%\bibliography{genus}
%\bibliographystyle{abbrv}

%\bigskip
\begin{flushright}
{\tt dario.pierotti@polimi.it}\\
{\tt gianmaria.verzini@polimi.it}\\
Dipartimento di Matematica, Politecnico di Milano\\
piazza Leonardo da Vinci 32, 20133 Milano, Italy.
\end{flushright}
\end{document}